\newtoks\prt 
\numberwithin{equation}{section}
\newtheorem{thm}{Theorem}[section]
\newtheorem{ques}[thm]{Question} 
\newtheorem{lemma}[thm]{Lemma} 
\newtheorem{prop}[thm]{Proposition} 
\newtheorem{cor}[thm]{Corollary}
\newtheorem{example}[thm]{Example}
\theoremstyle{definition} 
\newtheorem{example2}[thm]{Example} 
\newtheorem{remark}[thm]{Remark}
\newtheorem{remarks}[thm]{Remarks}
\def\eqn#1$$#2$${\begin{equation}\label#1#2\end{equation}}
\def\F{\mathcal F}
\def\ce{\mathbb C}
\def\diam{\operatorname{diam}} 
\def\ep{\varepsilon} 
\def\en{\mathbb N} 
\def\er{\mathbb R} 
\def\ef{\mathbb F} 
\def\zet{\mathbb Z} 
\def\dist{\operatorname{dist}}
\def \sign{\operatorname{sign}} 
\def\ov{\overline}
\def \Lip {\operatorname{Lip}}
\def \ext {\operatorname{ext}}
\def\span{\operatorname{span}} 
\def\osc{\operatorname{osc}}
\def \reg {\partial _{\kern1pt\text{reg}}} 
\def\iff{\Longleftrightarrow}
\def\ip#1#2{\left\langle#1,#2\right\rangle}
\def\dh{\widehat{\operatorname{d}}}
\def\clu#1{\operatorname{clust}_{w^*}(#1)}
\def\wde#1{\widetilde{\delta}\left(#1\right)}
\def\de#1{\delta\left(#1\right)}
\def\cj#1{\operatorname{\varepsilon_J}\left(#1\right)}
\def\dcj#1{\widetilde{\operatorname{\varepsilon_J}}\left(#1\right)}
\def\cjr#1{\operatorname{\varepsilon_J^R}\left(#1\right)}
\def\dcjr#1{\widetilde{\operatorname{\varepsilon_J^R}}\left(#1\right)}
\newcommand{\norm}[1]{\left\|#1\right\|}
\renewcommand{\Re}{\operatorname{Re}}
\newcommand{\ca}[2][]{\operatorname{ca}_{#1}\left(#2\right)}
\newcommand{\wca}[2][]{\widetilde{\operatorname{ca}}_{#1}\left(#2\right)}
\newcommand{\wk}[2][]{\operatorname{wk}_{#1}\left(#2\right)}
\newcommand{\wck}[2][]{\operatorname{wck}_{#1}\left(#2\right)}
\newcommand{\wscl}[1]{\overline{#1}^{w^*}}
\newcommand{\abs}[1]{\left|#1\right|}
\newcommand{\setsep}{;\,}
\title{Quantitative Schur property and measures of weak non-compactness}
\author{Ond\v{r}ej F.K. Kalenda }
\address{Ondřej F.K. Kalenda\\
Charles University\\
Faculty of Mathematics and Physics\\
Department of Mathematical Analysis \\
Sokolovsk\'{a} 83, 186 \ 75\\Praha 8, Czech Republic}
\email{kalenda@karlin.mff.cuni.cz}
\keywords{quantitative Schur property, $1$-strong Schur property, quantitative weak sequential completeness, measure of weak non-compactness, Lipschitz-free space}
\subjclass[2020]{46B04, 46A50, 40A05}
\thanks{Supported by the Research grant GA\v{C}R 23-04776S}
\begin{document}

\begin{abstract}
     We compare several versions of the quantitative Schur property of Banach spaces. We establish their equivalence up to multiplicative constants and provide examples clarifying when the change of constants is necessary. We also give exact results on preservation of the quantitative Schur property by finite or infinite direct sums. We further prove a sufficient condition for the $1$-Schur property which simplifies and generalizes previous results. 
     We study in more detail relationship of the  quantitative Schur property to quantitative weak sequential completeness and to equivalence of measures of weak non-compactness. We also illustrate the difference of real and complex settings. To this end we prove and use the optimal version of complex quantiative Rosenthal $\ell_1$-theorem. Finally, we give two examples of Lipschitz-free spaces over countable graphs which have quantitative Schur property, but not the $1$-Schur property.
    
\end{abstract}

\maketitle

\section{Introduction}

The quantitative approach to Banach space theory provides more precise versions of classical theorems and a deeper insight into geometric properties of Banach spaces. For example, in \cite{f-krein,Gr-krein,CMR} three different proofs of a quantitative version of Krein's theorems are given, there are quantitative version of James' compactness theorem \cite{CKS,Gr-James}, of Rosenthal's $\ell_1$-theorem \cite{behrends}, of Eberlein-\v{S}mulyan theorem \cite{AC-jmaa}. Further, quantitative versions of many properties have been studied, this applies in particular to weak sequential completeness \cite{wesecom}, Schur property and its variants \cite{qschur,qschur-dp,chen-posschur}, Dunford-Pettis property and its variants \cite{qdpp,p-DP}, reciprocal Dunford-Pettis property \cite{rdpp}, Pe\l{}czy\'nski property (V) and its variants \cite{haaanja-(V),haaanja-Cstar,chen-vstar},
Grothendieck property \cite{haaanja-gr,lechner-1gr}, Banach-Saks property including higher-order versions \cite{kryczka-jmaa12,krzyczka-jmaa13,BKS-bs,silber-qbs} etc.

In many of the quoted results measures of non-compactness play a key role. Let us recall basic related quantities. Assume that $X$ is a Banach space and $A,B$ are two nonempty subsets of $X$. We set
$$\dh(A,B)=\sup\{\dist(a,B)\setsep a\in A\}.$$
This quantity is sometimes called \emph{the excess $A$ from $B$}. If $A$ is bounded, $\dh(A,B)$ is necessarily finite. The excess is used to define \emph{Hausdorff measure of non-compactness} by the formula
$$\chi(A)=\inf\{\dh(A,F)\setsep F\subset X\mbox{ finite}\}$$
for any bounded set $A\subset X$. Similarly, \emph{De Blasi measure of weak non-compactness} is defined by
$$\omega(A)=\inf\{\dh(A,K)\setsep K\subset X\mbox{ weakly compact}\},\quad A\subset X\mbox{ bounded}.$$
Another measure of weak non-compactness inspired by the Banach-Alaoglu theorem is
$$\wk{A}=\dh(\wscl{A},X),\quad A\subset X\mbox{ bounded},$$
where the weak$^*$-closure is considered in the bidual $X^{**}$ ($X$ is considered canonically as a subspace of $X^{**}$).

We note that $\chi$ is indeed a measure of non-compactness in the sense that $\chi(A)=0$ if and only if $A$ is relatively compact. Similarly we have
$$\omega(A)=0 \iff \wk{A}=0 \iff A\mbox{ is relatively weakly compact}.$$
Quantity $\omega$ was introduced by De Blasi \cite{deblasi} in order to prove a fixed-point theorem. Quantity $\wk{\cdot}$ was used to establish the above-mentioned quantitative versions od Krein's theorem, Eberlein-\v{S}mulyan theorem and James' compactness theorem.

It is easy to observe that always $\wk{A}\le \omega(A)$. These two quantities are not equivalent in general (this follows from \cite[Corollary 3.4 and Theorem 2.3]{AC-meas}, an easy explicit example can be found in \cite[Example 10.1]{qdpp}) but in many classical spaces they are equal. This applies in particular to $c_0(\Gamma)$ and $L^1(\mu)$ \cite[Proposition 10.2 and Theorem 7.5]{qdpp}, preduals of von Neumann algebras or, more generally, of JBW$^*$-triples \cite{mwnc}, spaces of nuclear operators $N(\ell_p(I),\ell_q(J))$ for $p,q\in(1,\infty)$ \cite{HK-nuclear}. In fact it is still an open question whether there is a `classical' Banach space in which the two quantities are not equivalent.

Coincidence of measures of weak non-compactness in $L^1(\mu)$ was used in \cite[Theorem 8.1]{qdpp} to establish the `direct quantitative Dunford-Pettis property' for $L^1$-preduals, in particular for $C(K)$-spaces. Connections of equivalence of measures of weak non-compactness with quantitative Schur and Dunford-Pettis properties are shown in \cite[Theorem 2.1]{qschur-dp}. 

In the present paper we investigate in more detail quantitative Schur property and its relationship to measures of weak non-compactness. The paper is organized as follows.

In Section~\ref{sec:prel} we collect basic quantities measuring behavior of bounded sequences in Banach spaces
and recall some of their properties. We also provide definitions and basic facts on quantitative Schur property and quantitative weak sequential completeness. We further include a precise version of a lemma relating real and complex setting.

In Section~\ref{sec:rosenthal} we focus on measuring weak non-precompactness using a quantity from \cite{kania-qgp}. We analyze its connections to the quantitative version of Rosenthal's $\ell_1$-theorem from \cite{behrends} and differences between the real and complex cases. We also provide the optimal version of Rosenthal's theorem for complex spaces.

Section~\ref{sec:wsc} is devoted to measuring weak non-compactness in weakly sequentially complete spaces. In these spaces the measure of weak non-precompactness from Section~\ref{sec:rosenthal} becomes a measure of weak non-compactness. We compare it to the remaining measures and establish a connection to quantitative weak sequential completeness.

In Section~\ref{sec:schur} we deal with quantitative Schur property from \cite{qschur}. We establish precise connections of different versions of this property, in particular to the $1$-strong Schur property from \cite{Go-Ka-Li}, including the difference of the real and complex setting. We also analyze connections of the quantitative Schur property to measures of weak non-compactness.

Section~\ref{sec:m1} contains a sufficient condition for the $1$-Schur property. It provides an alternative
proof and a generalization of the main result of \cite{qschur-dp}.

In Section~\ref{sec:preserving} we address stability of quantitative Schur property to products. We prove two
positive results and show limits of such preservation.

In Section~\ref{sec:LF} we focus on the Schur property in Lipschitz-free spaces. It was recently characterized in \cite{p1u}, we are concern with possibility of quantitative strengthening of this result. In particular, we present two example of a Lipschitz-free spaces over graphs. The first one fails the $1$-Schur property but enjoys the $1$-strong Schur property. This witnesses that these two properties are different also within real Banach spaces. The second one fails even the $1$-strong Schur property. 

In the last section we point out some open problems. 

\section{Preliminaries -- quantities related to sequences}\label{sec:prel}

In this section we collect several quantities measuring behavior of bounded sequences in Banach spaces 
with focus on their applications to measures of (weak) non-compactness. 
We start by saying that we deal both with real and complex Banach spaces and that we use $\ef$ to denote the respective field ($\er$ or $\ce$). We continue by quantities related to norm-convergence.

Let $X$ be a Banach space over $\ef$ and let $(x_n)$ be a bounded sequence in $X$.
We set
$$\begin{aligned}
\ca{x_n} & = \osc(x_n) = \inf_{n\in\en} \diam\{x_k\setsep k\ge n\},\\
\wca{x_n} & = \inf\{ \ca{x_{n_k}}\setsep (n_k)\mbox{ increasing}\}.
\end{aligned}$$
The first quantity is the oscillation of the sequence and serves as a measure of non-cauchyness. Clearly $\ca{x_n}=0$ if and only if $(x_n)$ is  norm-Cauchy, hence norm-convergent. The second quantity measures oscillation of subsequences. It may be used to define an alternative measure of non-compactness by
$$\beta(A)=\sup\{\wca{x_n}\setsep (x_n)\mbox{ is a sequence in }A\}, \quad A\subset X\mbox{ bounded}.$$
This quantity coincides with Istr\u{a}\c{t}escu's measure of non-compactness from \cite{istratescu}, i.e.,
$$\beta(A)=\inf\{c>0\setsep A \mbox{ contains no infinite $c$-discrete subset}\}.$$
Indeed, if $(x_n)$ is $c$-discrete, clearly $\wca{x_n}\ge c$, hence inequality `$\ge$' holds. To prove the converse, take any $c<\beta(A)$. Then there is a sequence $(x_n)$ in $A$ with $\wca{x_n}>c$. It follows that no subsequence of $(x_n)$ can have diameter at most $c$. We use the classical Ramsey theorem to find a
$c$-discrete subsequence of $(x_n)$, which completes the argument. We note that $\beta$ is indeed a measure of non-compactness and it is equivalent to the Hausdorff measure $\chi$. More precisely, we have
$$\chi(A)\le \beta(A)\le 2\chi(A).$$
 
The above quantities have their analogues for the weak topology. More concretely, for a bounded sequence $(x_n)$
in a Banach space $X$ we set
$$\begin{aligned}
\de{x_n} & = \sup \{\ca{x^*(x_n)}\setsep x^*\in B_{X^*}\},\\
\wde{x_n} & = \inf\{ \de{x_{n_k}}\setsep (n_k)\mbox{ increasing}\}.
\end{aligned}$$
The first quantity is the usual measure of weak non-cauchyness. It is zero if and only if $(x_n)$ is weakly Cauchy. 
It can be alternatively expressed as
$$\de{x_n}=\diam \clu{x_n},$$
where $\clu{x_n}$ denotes the set of all cluster points of $(x_n)$ in $(X^{**},w^*)$. These quantities are not directly related with measures of weak non-compactness, but the second will be used to measure weak non-precompactness in the following section. 

Quantity $\de{\cdot}$ and its comparison with other quantities may be used to define quantitative versions of weak sequential completeness and of the Schur property. Following \cite{qschur} we say that a Banach space $X$
\begin{itemize}
    \item  is \emph{$c$-weakly sequentially complete} (shortly \emph{$c$-wsc}), where $c\ge 0$ if 
    $$\dh(\clu{x_n},X)\le c \de{x_n}$$
    for each bounded sequence $(x_n)$ in $X$;
    \item has the \emph{$c$-Schur property} if
    $$\ca{x_n}\le c\de{x_n}$$
     for each bounded sequence $(x_n)$ in $X$.
\end{itemize}
We note that the $c$-Schur property implies $c$-weak sequential completeness by \cite[Proposition 1.1]{qschur}, but quantitative weak sequential completeness accompanied by the Schur property does not imply quantitative Schur property by \cite[Example 1.4]{qschur}. Reflexive spaces are clearly $0$-wsc, $L$-embedded Banach spaces are $\frac12$-wsc by \cite[Theorem 1]{wesecom}. Further, if $X$ is $c$-wsc for some $C<\frac12$, it is reflexive by \cite[Proposition 1.2]{qschur}.

The set of weak$^*$-cluster points of a bounded sequence may be used to define an alternative measure
of weak non-compactness inspired by the Eberlein-\v{S}mulyan theorem using the formula
$$\wck{A}=\sup\{\dist(\clu{x_n},X)\setsep (x_n)\mbox{ is a sequence in }A\},$$
where $A$ is again a bounded subset of a Banach space $X$. This quantity is a measure of weak noncompactness equivalent to $\wk{\cdot}$ as, due to \cite{AC-jmaa}, we have
\begin{equation}\label{eq:wck-wk}
    \wck{A}\le\wk{A}\le 2\wck{A}.
\end{equation}

As we remarked at the beginning of this section, we work simultaneously in real and complex Banach spaces. In fact,
if $X$ is a complex Banach space and $X_R$ is its real variant (i.e., the same space, we just forget multiplication by imaginary numbers), all quantities considered in this section coincide in $X$ and in $X_R$. This is explained in \cite[Section 5]{wesecom}, see also \cite[Section 2.1]{qdpp}.

We finish this section by a lemma on complex numbers which is useful to relate certain quantities which differ in real and complex spaces.

\begin{lemma}\label{L:complex}
    Let $J$ be a finite set and let $\lambda_j$, $j\in J$, be complex numbers. Then there is $I\subset J$ such that
    $$\abs{\sum_{j\in I} \lambda_j}\ge\frac1\pi\sum_{j\in J}\abs{\lambda_j}.$$
    Moreover, the constant $\frac1\pi$ is optimal.
\end{lemma}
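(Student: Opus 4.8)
The plan is to prove the lower bound by an averaging argument over all possible ``directions'', and then to establish optimality by testing the inequality on the $n$-th roots of unity and letting $n\to\infty$.

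For the lower bound I would write $\lambda_j=\abs{\lambda_j}e^{i\theta_j}$ and, for each angle $\phi\in[0,2\pi)$, consider the index set $I(\phi)=\{j\in J\setsep \Re(e^{-i\phi}\lambda_j)\ge 0\}$ collecting the vectors pointing into the half-plane determined by $\phi$. Since the real part of a complex number never exceeds its modulus,
$$\Betr{\sum_{j\in I(\phi)}\lambda_j}\ge\Re\Bigl(e^{-i\phi}\sum_{j\in I(\phi)}\lambda_j\Bigr)=\sum_{j\in J}\bigl(\Re(e^{-i\phi}\lambda_j)\bigr)^+,$$
the last equality holding because precisely the indices in $I(\phi)$ contribute a nonnegative summand. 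The heart of the matter is then to average the right-hand side over $\phi$: using $\Re(e^{-i\phi}\lambda_j)=\abs{\lambda_j}\cos(\theta_j-\phi)$ together with $\frac1{2\pi}\int_0^{2\pi}(\cos\psi)^+\di\psi=\frac1\pi$ I obtain
$$\frac1{2\pi}\int_0^{2\pi}\sum_{j\in J}\bigl(\Re(e^{-i\phi}\lambda_j)\bigr)^+\di\phi=\frac1\pi\sum_{j\in J}\abs{\lambda_j}.$$
As the integrand is a continuous function of $\phi$, some value $\phi_0$ attains at least this average, and $I=I(\phi_0)$ yields the claimed inequality.

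For optimality I would test the configuration $\lambda_j=e^{2\pi ij/n}$ with $J=\{0,\dots,n-1\}$, so that $\sum_{j\in J}\abs{\lambda_j}=n$. The point is that the best subset can be recast in terms of directions: for fixed $I$ one has $\abs{\sum_{j\in I}\lambda_j}=\max_\phi\Re(e^{-i\phi}\sum_{j\in I}\lambda_j)$, and optimizing over $I$ as well gives
$$\max_{I\subset J}\Betr{\sum_{j\in I}\lambda_j}=\max_{\phi}\sum_{j\in J}\bigl(\Re(e^{-i\phi}\lambda_j)\bigr)^+.$$
For the roots of unity the right-hand side is a Riemann sum for $n\cdot\frac1{2\pi}\int_0^{2\pi}(\cos(\psi-\phi))^+\di\psi=\frac n\pi$, and invoking uniform convergence of these sums (the functions $\phi\mapsto\frac1n\sum_{j\in J}(\cos(\theta_j-\phi))^+$ are equi-Lipschitz in $\phi$) I conclude that the optimal ratio $\max_{I}\abs{\sum_{j\in I}\lambda_j}\big/\sum_{j\in J}\abs{\lambda_j}$ tends to $\frac1\pi$ as $n\to\infty$. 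Hence no constant exceeding $\frac1\pi$ can work.

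The routine parts are the two trigonometric integrals, both reducing to $\frac1{2\pi}\int(\cos)^+=\frac1\pi$. The step demanding the most care is the optimality claim: one must first pass from a fixed subset to the ``best direction'' reformulation and then justify that the discrete maxima converge to the continuous value, where it is the \emph{uniform} (rather than merely pointwise) convergence of the Riemann sums that makes the conclusion about $\max_\phi$ rigorous.
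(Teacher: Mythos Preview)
Your argument is correct. For the lower bound you essentially reproduce Rudin's averaging proof, which is what the paper cites, so there is no real difference there.

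For optimality the two approaches diverge. The paper works with the $2n$-th roots of unity and argues structurally: it first shows that an optimal $I$ must contain exactly one of each antipodal pair $\{\eta,-\eta\}$, then (after a rotation) pins down $I$ as the set of roots in a half-plane, and finally computes $\abs{\sum_{\lambda\in I}\lambda}$ exactly as a geometric series $\frac{2}{\abs{1-e^{i\pi/n}}}$, whose ratio to $2n$ tends to $\frac1\pi$. Your route is softer and more conceptual: you recast $\max_I\abs{\sum_{j\in I}\lambda_j}$ as $\max_\phi\sum_j(\Re(e^{-i\phi}\lambda_j))^+$ via the interchange of maxima, recognise the inner sum as a Riemann sum for $\frac{n}{2\pi}\int_0^{2\pi}(\cos)^+$, and pass to the limit using equi-Lipschitz uniform convergence. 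The paper's method gives the exact optimum for each $n$, not just the asymptotics; your method avoids the case analysis identifying the maximiser and makes transparent why the same constant $\frac1\pi$ appears in both the bound and the optimality, since both come from the same integral.
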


\begin{proof} The existence of $I$ is proved in \cite[Lemma 6.3]{rudin}. Let us prove the optimality.

Fix $n\in\en$ and set $J=\{e^{i j\pi/n}\setsep j\in \{0,1,\dots,2n-1\}\}$. Then clearly
$$\sum_{\lambda\in J} \abs{\lambda}=2n.$$
Let $I\subset J$ be such that $\abs{\sum_{\lambda\in I}\lambda}$ is maximal. Observe that
\begin{equation}
    \label{eq:eta}
\forall \eta\in J\colon I\mbox{ contains exactly one of the numbers }\eta,-\eta.\end{equation}
Indeed, fix $\eta\in J$ and set $I^\prime=I\setminus \{\eta,-\eta\}$.  Let 
$$\mu=\sum_{\lambda\in I^\prime}\lambda.$$
Then clearly
$$\mu=\sum_{\lambda\in I^\prime\cup\{\eta,-\eta\}}\lambda$$
as well. We deduce that
$$\abs{\mu+\eta}^2+\abs{\mu-\eta}^2=2(\abs{\mu}^2+\abs{\eta}^2)>2\abs{\mu}^2.$$
Thus
$$\abs{\mu+\eta}>\abs{\mu}\mbox{ or }\abs{\mu-\eta}>\abs{\mu}.$$
The choice of $I$ now yields that it must contain exactly one of the numbers $\eta,-\eta$, which completes the proof of \eqref{eq:eta}.

Next we set 
$$\mu=\sum_{\lambda\in I} \lambda.$$
Up to rotation of the set $I$ we may assume that the argument of $\mu$ belongs to $(\frac\pi2-\frac\pi n,\frac\pi2]$. Then, in particular, both real and imaginary part of $\mu$ are non-negative, hence the maximality assumption easily implies that
$$I\supset\{e^{i j\pi/n}, 0\le j\le \tfrac n2\}.$$
We further claim that
$$e^{ij\pi/n}\in I\mbox{ for } \tfrac n2<j\le n-1.$$
Indeed, assume that $\frac n2<j\le n-1$ and $e^{ij\pi/n}\notin I$. By \eqref{eq:eta} we know that  $-e^{ij\pi/n}\in I$. But then
$$\abs{\mu+2e^{ij\pi/n}}=\abs{\abs{\mu}+2e^{ij\pi/n-\arg \mu}}\ge \abs{\mu}+2\cos(\tfrac{j\pi}{n}-\arg \mu).$$
By our assumption we have
$$\tfrac{j\pi}{n}-\arg \mu< \tfrac{(n-1)\pi}{n}-(\tfrac\pi2-\tfrac\pi n)=\tfrac\pi2$$
and
$$\tfrac{j\pi}{n}-\arg \mu>\tfrac{\pi n/2}{n}-\tfrac\pi2=0,$$
so $\cos(\tfrac{j\pi}{n}-\arg \mu)>0$, hence
$$\abs{\mu+2e^{ij\pi/n}}>\abs{\mu},$$
a contradiction with the maximality assumption. 

We conclude that
$$I=\{e^{ij\pi/n}\setsep j\in\{0,\dots,n-1\}.$$
Hence
$$\abs{\sum_{\lambda\in I}\lambda}=\abs{\sum_{j=0}^{n-1}e^{ij\pi/n}}=\abs{\frac{1-e^{i\pi}}{1-e^{i\pi/n}}}=\frac{2}{\abs{1-e^{i\pi/n}}}=\frac{1/n}{\abs{1-e^{i\pi/n}}}\cdot 2n.$$
Since
$$\frac{1/n}{\abs{1-e^{i\pi/n}}}\to\frac1\pi\mbox{ as }n\to\infty,$$
the optimality of $\frac1\pi$ follows.
\end{proof}

\section{Measuring weak non-precompactness}\label{sec:rosenthal}

Recall that a subset $A$ of a Banach space $X$ is called \emph{weakly precompact} if any sequence in $A$ admits a weakly Cauchy subsequence. To measure weak non-precompactness it is hence natural to define the following quantity (again, $A$ is a bounded subset of a Banach space $X$):
$$\beta_w(A)=\sup\{\wde{x_n}\setsep (x_n)\mbox{ is a sequence in }A\}.$$
This quantity was used implicitly in \cite{rodr-ep-wpc} for real spaces and in \cite{kania-qgp} it is denoted by 
$\operatorname{wpc}_2$. 
It follows by combining the uniform boundedness principle with Rosenthal's $\ell_1$ theorem that a set is weakly precompact if and only if it is bounded and contains no sequence equivalent to the canonical basis of $\ell_1$.
We are going to formulate a quantitative version of this equivalence. To this end we introduce quantites measuring 
 lower $\ell_1$-estimates of bounded sequences. If $(x_n)$ is a bounded sequence in a Banach space $X$, we set
$$\begin{aligned}
 \cj{x_n}&=\sup_{n\in\en} \inf \left\{ \norm{\sum_{k=n}^N\alpha_k x_k} \setsep N\ge n, \alpha_n,\dots,\alpha_N\in\ef, \sum_{k=n}^N\abs{\alpha_k}=1\right\},\\   
 \cjr{x_n}&=\sup_{n\in\en} \inf \left\{ \norm{\sum_{k=n}^N\alpha_k x_k} \setsep N\ge n, \alpha_n,\dots,\alpha_N\in\er, \sum_{k=n}^N\abs{\alpha_k}=1\right\}.
\end{aligned}$$
If we pass to a subsequence, these two quantities cannot decrease. Hence, the subsequence versions will read 
$$\begin{aligned}
\dcj{x_n} & = \sup\{ \cj{x_{n_k}}\setsep (n_k)\mbox{ increasing}\},\\
\dcjr{x_n} & = \sup\{ \cjr{x_{n_k}}\setsep (n_k)\mbox{ increasing}\}.
\end{aligned}$$
These quantities (in the real setting) were introduced by H.~Pfitzner in a preprint version of \cite{pfitzner-inve} (in the published version they are not present anymore). We find them useful to formulate a quantitative version of Rosenthal's $\ell_1$-theorem. Let us point out that, unlike other quantities, they have different behavior in real and complex spaces. We continue by a theorem collecting known connections of these quantities to $\de{\cdot}$ and $\wde{\cdot}$.

\begin{thm}\label{T:rosenthal}
Let $X$ be a Banach space and let $(x_n)$ be a bounded sequence. Then the following inequalities are valid:
\begin{enumerate}[$(a)$]
    \item $\de{x_n}\ge2\cjr{x_n}\ge 2\cj{x_n}$;
    \item $\wde{x_n}\ge2\cjr{x_n}\ge 2\cj{x_n}$;
    \item $\de{x_n}\ge2\dcjr{x_n}\ge 2\dcj{x_n}$;
    \item $\wde{x_n}\le 2 \dcjr{x_n}$;
    \item  $\wde{x_n}\le \pi \dcj{x_n}$ if $X$ is complex.
\end{enumerate}
\end{thm}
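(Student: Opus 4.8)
The plan is to handle the three groups of inequalities separately: prove the lower estimates $(a)$–$(c)$ by hand from a single computation, read off $(d)$ as the hard half of the real quantitative Rosenthal theorem, and obtain the complex estimate $(e)$ by feeding Lemma~\ref{L:complex} into the argument behind $(d)$. First the trivial comparisons: in $\cj{x_n}$ the infimum runs over the larger set of complex coefficients than in $\cjr{x_n}$, whence $\cjr{x_n}\ge\cj{x_n}$ and likewise $\dcjr{x_n}\ge\dcj{x_n}$, giving the rightmost inequalities in $(a)$–$(c)$. I would also use two monotonicity facts: $\cjr{x_{n_k}}\ge\cjr{x_n}$ for every subsequence (these quantities cannot decrease under passing to a subsequence, as already observed before the statement), and $\de{x_{n_k}}\le\de{x_n}$ (the oscillation of a subsequence of scalars never exceeds that of the whole sequence).

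The core is the first inequality of $(a)$, namely $\cjr{x_n}\le\frac12\de{x_n}$. Fix $m$ and set $c=\inf\{\norm{\sum_{k=m}^N\alpha_kx_k}\setsep\alpha_k\in\er,\ \sum|\alpha_k|=1\}$; assume $c>0$ (otherwise there is nothing to prove), so that $\norm{\sum_{k\ge m}\alpha_kx_k}\ge c\sum|\alpha_k|$ for all finitely supported real $(\alpha_k)$. Working in the real space $X_R$, in which $\de{\cdot}$ is unchanged, the assignment $\sum_{k\ge m}\alpha_kx_k\mapsto\sum_{k\ge m}(-1)^k\alpha_k$ is a well-defined real-linear functional of norm at most $1/c$ on the span of the tail; extending it by Hahn–Banach and scaling by $c$ produces $f\in B_{X_R^*}$ with $f(x_k)=c(-1)^k$ for $k\ge m$. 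Then $\osc(f(x_k))=2c$, so $\de{x_n}\ge 2c$, and taking the supremum over $m$ gives $(a)$. Applying $(a)$ to each subsequence and using $\cjr{x_{n_k}}\ge\cjr{x_n}$, then taking the infimum over subsequences, yields $(b)$; applying $(a)$ to each subsequence and using $\de{x_{n_k}}\le\de{x_n}$, then taking the supremum over subsequences, yields $(c)$.

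Inequality $(d)$ is exactly the nontrivial "$\ell_1$-building" half of the quantitative Rosenthal $\ell_1$-theorem for real spaces from \cite{behrends}: if every subsequence has weak oscillation at least $2\dcjr{x_n}-\ep$ is to fail, one builds a tree of norming functionals realizing the relevant sign patterns and extracts a subsequence carrying a real lower $\ell_1$-estimate with constant arbitrarily close to $\frac12\wde{x_n}$. I would invoke this as a black box.

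The genuinely new point is $(e)$, and this is where I expect the main difficulty. My plan is to rerun the extraction underlying $(d)$ with complex coefficients: at the stage where one has a subsequence together with a functional whose values cluster at two points at distance $\approx\wde{x_n}$ and wants a lower estimate for $\norm{\sum\beta_kx_k}$ valid for all complex $\beta_k$, I replace the real sign-selection by choosing, via Lemma~\ref{L:complex}, a subset $I$ with $\abs{\sum_{k\in I}\beta_k}\ge\frac1\pi\sum|\beta_k|$, and organise the tree over such subsets. This should deliver a complex lower $\ell_1$-estimate with constant close to $\frac1\pi\wde{x_n}$, i.e. $\wde{x_n}\le\pi\dcj{x_n}$. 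The hard part is precisely arranging the complex extraction so that Lemma~\ref{L:complex} applies uniformly along the tree and verifying that the emerging constant is exactly $\frac1\pi$ (so that $\pi$, not just some universal constant, appears); here the optimality clause of Lemma~\ref{L:complex} is essential. This also explains why $(e)$ is not a formal consequence of $(d)$: since $\dcj{x_n}$ may be far smaller than $\dcjr{x_n}$, the bound $\pi\dcj{x_n}$ is genuinely stronger than, and incomparable to, $2\dcjr{x_n}$.
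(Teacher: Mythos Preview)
Your treatment of $(a)$–$(d)$ is essentially the paper's: the paper cites \cite[Lemma~5]{wesecom} for the first inequality in $(a)$ (your Hahn--Banach argument is exactly the standard proof of that lemma), derives $(b)$ and $(c)$ from $(a)$ by the same monotonicity considerations you describe, and invokes \cite[Theorem~3.2]{behrends} as a black box for $(d)$.

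For $(e)$ you have the right key ingredient---Lemma~\ref{L:complex} is what produces the constant $\pi$---but your plan to ``organise the tree over such subsets'' is not quite coherent: the subset $I$ from Lemma~\ref{L:complex} depends on the given coefficients $\beta_k$, so it cannot be used during the tree extraction, which must be done before the coefficients are known. The paper's route is more economical and avoids this issue: Behrends already proved a complex version \cite[Theorem~3.3]{behrends} with constant $4\sqrt{2}$, so no new extraction is needed. The paper observes that in Behrends' \emph{final computation} (after the subsequence is already fixed), where one is handed coefficients $a_\rho$ with $\sum|a_\rho|=1$ and selects those lying in a certain sector $S_1$, one may instead select the subset $S$ provided by Lemma~\ref{L:complex}; this single substitution improves the constant to $\pi$. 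So your instinct to apply Lemma~\ref{L:complex} at the coefficient-estimation stage is correct, but the point is that this stage comes \emph{after} the extraction, not during it, and Behrends' complex extraction can be used as is.
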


\begin{proof}
If $X$ is complex, we denote by $X_R$ its real version. Then quantities $\delta$ and $\widetilde{\delta}$ are the same in $X$ and in $X_R$, as explained in \cite[Section 5]{wesecom}. Therefore the first inequality in $(a)$ follows from \cite[Lemma 5]{wesecom}, the second inequality is trivial. Assertions $(b)$ and $(c)$ follow from $(a)$.

Assertion $(d)$ is a consequence of \cite[Theorem 3.2]{behrends}. Indeed, the quoted result, using our notation, says that, assuming $\dcjr{x_n}<c$, we may find a subsequence $(x_{n_k})$ with $\de{x_{n_k}}\le 2c$.

Assertion $(e)$, with constant $4\sqrt2$ instead of $\pi$, follows from \cite[Theorem 3.3]{behrends}. 
The constant may be improved to $\pi$ by a minor change in the final computation. Indeed, assuming $\sum_{\rho=1}^r\abs{a_\rho}=1$, by Lemma~\ref{L:complex} there is $S\subset \{1,\dots,r\}$ such that $\abs{\sum_{\rho\in S} a_\rho}\ge\frac1\pi$. Therefore it is enough to replace condition `$a_\rho\in S_1$' by `$\rho\in S$' in the final computation.
\end{proof}

\begin{remarks}
(1) All inequalities in Theorem~\ref{T:rosenthal} are optimal. For assertions $(a)$--$(d)$ it is witnessed by the canonical basis of $\ell_1$ (real or complex). For assertion $(e)$ it is witnessed by Example~\ref{ex:cantor} below.

 (2) Inequality $(d)$ in Theorem~\ref{T:rosenthal} cannot be reversed (even using another constant) for trivial reasons. Indeed, assume $X=\ell_1$ and let $(x_n)$ be the sequence
 $$e_1,0,e_2,0,e_3,0,\dots,$$
 where $(e_n)$ is the canonical basis. Then $\wde{x_n}=0$ and $\dcjr{x_n}=1$.

 (3) Assume $X$ is complex. Then clearly $\cj{x_n}\le \cjr{x_n}$ for each bounded sequence $(x_n)$ in $X$. However, no converse of this inequality is valid, again for trivial reasons. Indeed, let $X=\ell_1$ and $(x_n)$ be the sequence
 $$e_1,ie_1,e_2,ie_2,e_3,ie_3,\dots.$$
 Then $\cj{x_n}=0$ and $\cjr{x_n}=\frac1{\sqrt{2}}$.

 (4) If $X$ is complex, it follows by combining assertions $(e)$ and $(b)$ of Theorem~\ref{T:rosenthal} that $\cjr{x_n}\le \frac\pi2\dcj{x_n}$  for each bounded sequence $(x_n)$ in $X$. It easily follows that
 $$\dcj{x_n}\le \dcjr{x_n}\le\tfrac\pi2\dcj{x_n}.$$
Both inequalities are optimal. For the first one it is witnessed by the canonical basis of the complex space $\ell_1$.  For the second one it is witnessed by the following example.
\end{remarks}

\begin{example}\label{ex:cantor}
    Let $K=\{-1,1\}^\en$ and $X=C(K,\ce)$. Let $f_n\in X$ be defined as the projection onto $n$-th coordinate. Then $\de{f_n}=\wde{f_n}=2$, $\cjr{f_n}=\dcjr{f_n}=1$ and $\dcj{f_n}=\frac2{\pi}$.
\end{example}

\begin{proof}
    First observe, that in the real setting the sequence $(f_n)$ is isometrically equivalent to the canonical basis of $\ell_1$. Indeed, fix $n\in\en$ and $\alpha_1,\dots,\alpha_n\in\er$. Let
    $t\in K$ be such that $t_j=1$ if $\alpha_j\ge0$ and $t_j=-1$ if $\alpha_j<0$ for $j=1,\dots,n$. Then
    $$\sum_{j=1}^n\abs{\alpha_j}\ge\norm{\sum_{j=1}^n\alpha_jf_j}\ge \sum_{j=1}^n\alpha_jf_j(t)=\sum_{j=1}^n\abs{\alpha_j},$$
    so the equalities hold. We easily deduce  $\cjr{f_n}=\dcjr{f_n}=1$ and  $\de{f_n}=\wde{f_n}=2$.

    To prove that $\dcj{f_n}\le\frac2\pi$, fix any $\eta>0$. By Lemma~\ref{L:complex} there is some $m\in\en$ and complex numbers $\alpha_1,\dots,\alpha_m$ such that $\sum_{j=1}^m\abs{\alpha_j}=1$ and for each $I\subset \{1,\dots,m\}$ we have $\abs{\sum_{j\in I}\alpha_j}\le\frac1\pi+\eta$. Let $k_1<k_2<\dots<k_m$ be an arbitrary choice of natural numbers. Then
    $$\begin{aligned}
      \norm{\sum_{j=1}^m \alpha_j f_{k_j}}&=\max\left\{\abs{\sum_{j=1}^m \alpha_j s_j}\setsep s_1,\dots,s_m\in\{-1,1\}\right\}  
      \\  &= \max\left\{\abs{\sum_{j\in I} \alpha_j - \sum_{j\in \{1,\dots,m\}\setminus I} \alpha_j }\setsep I\subset\{1,\dots,m\}\right\}  
      \\&\le  \max\left\{\abs{\sum_{j\in I} \alpha_j} +\abs{\sum_{j\in \{1,\dots,m\}\setminus I} \alpha_j }\setsep I\subset\{1,\dots,m\}\right\}  
      \\&\le\frac2\pi+2\eta.
    \end{aligned}$$
   It easily follows that $\dcj{f_k}\le \frac2\pi+2\eta$. Since $\eta>0$ was arbitrary, the argument is complete.
    
\end{proof}

Now we are ready to formulate the promised quantitative version of the characterization of
weakly precompact sets using non-containment of an $\ell_1$-sequence. It is essentially a reformulation of \cite[Theorem 2.1]{rodr-ep-wpc}.
 
\begin{prop}\label{P:betaw}
    Let $X$ be a Banach space and let $A\subset X$ be a bounded set. Then
    $$\beta_w(A) = 2 \sup\{ \cjr{x_n}\setsep (x_n)\subset A\}
    = 2 \sup\{ \dcjr{x_n}\setsep (x_n)\subset A\}.$$
\end{prop}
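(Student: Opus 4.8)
The plan is to sandwich $\beta_w(A)$ between $2P$ and $2Q$, where
$P=\sup\{\cjr{x_n}\setsep (x_n)\subset A\}$ and $Q=\sup\{\dcjr{x_n}\setsep (x_n)\subset A\}$, and then to observe that in fact $P=Q$. The point is that all the analytic work is already contained in Theorem~\ref{T:rosenthal}, so the proposition is essentially a repackaging of its assertions $(b)$ and $(d)$ together with an elementary remark on suprema taken over subsequences.

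First I would establish $P=Q$. Since passing to a subsequence never decreases $\cjr{\cdot}$ (this is exactly the monotonicity remarked before the definition of $\dcjr{\cdot}$), we have $\dcjr{x_n}\ge\cjr{x_n}$ for every bounded sequence, and taking suprema over $(x_n)\subset A$ gives $Q\ge P$. For the reverse inequality I would fix $(x_n)\subset A$ and use that every subsequence $(x_{n_k})$ is again a sequence in $A$; hence $\cjr{x_{n_k}}\le P$, and taking the supremum over all increasing $(n_k)$ yields $\dcjr{x_n}\le P$. Taking the supremum over $(x_n)\subset A$ then gives $Q\le P$, so $P=Q$.

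Next I would simply read off the two one-sided estimates from Theorem~\ref{T:rosenthal}. For the lower bound, assertion $(b)$ gives $\wde{x_n}\ge 2\cjr{x_n}$ for each $(x_n)\subset A$; taking suprema over such sequences yields $\beta_w(A)\ge 2P$. For the upper bound, assertion $(d)$ gives $\wde{x_n}\le 2\dcjr{x_n}\le 2Q$ for each $(x_n)\subset A$; taking the supremum again yields $\beta_w(A)\le 2Q$. Combining these with $P=Q$ produces the chain
$$2P\le\beta_w(A)\le 2Q=2P,$$
so all three quantities coincide and the desired identity $\beta_w(A)=2\,\sup\{\cjr{x_n}\setsep (x_n)\subset A\}=2\,\sup\{\dcjr{x_n}\setsep (x_n)\subset A\}$ follows.

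I do not expect a genuine obstacle here beyond what is already imported into Theorem~\ref{T:rosenthal}: the only mild subtlety is the direction $Q\le P$, which rests on the closure of the class of $A$-sequences under taking subsequences rather than on any new analytic input. The real content—the factor $2$ in the lower $\ell_1$-estimate and the passage to a suitable subsequence—is carried by Behrends' quantitative Rosenthal theorem through assertion $(d)$, while assertion $(b)$ supplies the matching lower bound. Finally, I would remark that the cited \cite[Theorem 2.1]{rodr-ep-wpc} already gives the statement in the real setting, so the present proposition is just its field-independent reformulation obtained through these two inequalities.
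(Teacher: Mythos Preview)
Your proof is correct and follows essentially the same approach as the paper: the paper also first notes that the second equality is immediate from the definitions (your $P=Q$ argument), and then derives the first equality by combining assertions $(b)$ and $(d)$ of Theorem~\ref{T:rosenthal} via that second equality. You have simply made explicit the details the paper compresses into two sentences.
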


\begin{proof}
    The second equality follows easily from the definitions of the respective quantities. 
    The first equality follows by combining assertions $(b)$ and $(d)$ from Theorem~\ref{T:rosenthal} 
    using the second one. 
\end{proof}

It is clear that $\beta_w(A)=0$ if and only if $A$ is weakly precompact, i.e., it may be viewed as a `measure of weak non-precompactness'. Note that any relatively weakly compact set is weakly precompact (due to the Eberlein-\v{S}mulyan theorem). A quantiative version of this implication is contained in the following lemma.

\begin{lemma}\label{L:betaw-wck}
     Let $X$ be a Banach space and let $A\subset X$ be a bounded set. Then $\beta_w(A)\le2\wck{A}$.
\end{lemma}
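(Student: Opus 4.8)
The plan is to reduce the statement to the already-established Proposition~\ref{P:betaw}, which gives $\beta_w(A)=2\sup\{\cjr{x_n}\setsep (x_n)\subset A\}$. Thus it suffices to prove that $\cjr{x_n}\le\wck{A}$ for every sequence $(x_n)$ in $A$. Since $(x_n)\subset A$, the definition of $\wck{A}$ yields $\dist(\phi,X)\le\wck{A}$ for every weak$^*$-cluster point $\phi\in\clu{x_n}$. Hence the whole statement follows once I establish the general inequality
$$\cjr{x_n}\le\sup\{\dist(\phi,X)\setsep \phi\in\clu{x_n}\}=:d,$$
valid for an arbitrary bounded sequence, which is the real content.

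To prove this inequality I fix $n\in\en$ and $\ep>0$ and aim to produce real coefficients with $\sum\abs{\alpha_k}=1$, supported in $[n,\infty)$, such that $\norm{\sum\alpha_kx_k}<d+\ep$; taking the supremum over $n$ then gives $\cjr{x_n}\le d$. First I would pick any weak$^*$-cluster point $\phi\in\clu{x_n}$ together with $x\in X$ satisfying $\norm{\phi-x}<d+\ep$ (possible because $\dist(\phi,X)\le d$). The crucial observation is that $\phi$ is a weak$^*$-cluster point of every tail $(x_k)_{k\ge m}$, hence lies in the weak$^*$-closure (in $X^{**}$) of $\co\{x_k\setsep k\ge m\}$.

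The key step is then a Hahn--Banach comparison: for $C:=\overline{\co}\{x_k\setsep k\ge m\}$ (norm closure in $X$) and any $x^*\in B_{X^*}$, weak$^*$-continuity of $x^*$ gives $\Re x^*(x)-\sup_{c\in C}\Re x^*(c)\le\Re(x-\phi)(x^*)\le\norm{\phi-x}$, so that $\dist(x,C)\le\norm{\phi-x}<d+\ep$. Consequently some genuine finite convex combination of the tail lies within $d+\ep$ of $x$ in norm. Applying this twice on two disjoint blocks $[n,N_1]$ and $(N_1,N_2]$ produces convex combinations $u$ and $v$ with disjoint supports, each within $d+\ep$ of $x$; then $\tfrac12(u-v)=\sum\alpha_kx_k$ has $\sum\abs{\alpha_k}=1$ and $\norm{\tfrac12(u-v)}\le\tfrac12(\norm{u-x}+\norm{x-v})<d+\ep$, as required.

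The main obstacle is exactly this middle passage: turning the soft information ``$\phi$ is a weak$^*$-cluster point close to $X$'' into a hard, finitely supported, norm-small \emph{signed} combination of the $x_k$. Two points need care. First, a convex combination of the $x_k$ only approximates $x$ (whose norm may be large), so I cannot use it directly; the sign-cancelling half-difference $\tfrac12(u-v)$ of two blocks is what removes the $x$-contribution while keeping $\sum\abs{\alpha_k}=1$. Second, the Hahn--Banach estimate must compare the norm distance in $X$ to the weak$^*$ distance in $X^{**}$, and the inequality goes the right way only because the support functional of a convex set is unchanged under weak$^*$-closure. The argument is insensitive to the scalar field, so it covers the complex case as well; alternatively one may deduce $\wde{x_n}\le2d$ directly via Theorem~\ref{T:rosenthal}$(d)$, since $\clu{x_{n_k}}\subset\clu{x_n}$ forces $\dcjr{x_n}\le d$.
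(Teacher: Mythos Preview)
Your overall strategy coincides with the paper's: both reduce via Proposition~\ref{P:betaw} to the inequality $\cjr{x_n}\le\dist(\clu{x_n},X)$. The paper simply cites this as \cite[Lemma~5]{wesecom}, while you supply a self-contained proof via the two-block convex-combination trick; so this is the same route, just with the cited lemma unpacked.

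There is, however, a slip in your setup. You assert that ``the definition of $\wck{A}$ yields $\dist(\phi,X)\le\wck{A}$ for \emph{every} weak$^*$-cluster point $\phi$'' and accordingly set $d:=\sup_{\phi\in\clu{x_n}}\dist(\phi,X)=\dh(\clu{x_n},X)$. This does not follow from the definition: $\wck{A}$ is a supremum of quantities $\dist(\clu{x_n},X)=\inf_{\phi}\dist(\phi,X)$, so only the \emph{infimum} over $\phi$ is controlled; your $d$ is a priori only bounded by $\wk{A}\le 2\wck{A}$, which would lose a factor of $2$. Fortunately your detailed argument uses a single fixed cluster point $\phi$ and a single $x\in X$, and produces a signed combination of norm $<\norm{\phi-x}+\ep$. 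Hence it actually proves the stronger estimate $\cjr{x_n}\le\dist(\clu{x_n},X)$. The fix is trivial: set $d:=\dist(\clu{x_n},X)$, choose $\phi$ with $\dist(\phi,X)<d+\tfrac{\ep}{2}$ and then $x$ with $\norm{\phi-x}<d+\ep$; the rest of your proof goes through verbatim.
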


\begin{proof}
    This is proved in \cite[Theorem 3.2(2)]{kania-qgp}. We recall the easy argument for the sake of completeness.
    Let $(x_n)\subset A$. By \cite[Lemma 5]{wesecom} we know that
    $$\dist(\clu{x_n},X)\ge \cjr{x_n}.$$
    We now conclude using definitions and Proposition~\ref{P:betaw}.
\end{proof}

\section{Quantities in weakly sequentially complete spaces}\label{sec:wsc}

Recall that a Banach space $X$ is \emph{weakly sequentially complete} if any weakly Cauchy sequence is weakly convergent. It follows that in such spaces weakly precompact set coincide with relatively weakly compact sets. In particular, the quantity $\beta_w$ becomes a measure of weak non-compactness.
Hence, in weakly sequentially complete spaces we have three a priori non-equivalent measures of weak non-compactness related by inequalities
$$\beta_w(A)\le 2\wck{A}\le2\omega(A).$$
A possible converse to the first inequality is addressed in the following proposition. We note that condition $(i_c)$ means just that $X$ is $c$-wsc.

\begin{prop}\label{P:wsc}
    Let $X$ be a Banach space. Given $c>0$, consider the following conditions:
    \begin{enumerate}[$(i_c)$]
        \item $\dh(\clu{x_n},X)\le c\de{x_n}$ for each bounded sequence $(x_n)$ in $X$;
        \item $\dist(\clu{x_n},X)\le c\de{x_n}$ for each bounded sequence $(x_n)$ in $X$;
        \item $\dist(\clu{x_n},X)\le c\wde{x_n}$ for each bounded sequence $(x_n)$ in $X$;
        \item $\wck{A}\le c\beta_w(A)$ for each bounded set $A\subset X$.
    \end{enumerate}
   For each $c>0$ we then have 
   $$(i_c)\implies (ii_c)\iff(iii_c)\iff(iv_c)\implies (i_{2c}).$$
\end{prop}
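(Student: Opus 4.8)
The plan is to treat the two outer implications directly and to fuse the three middle conditions by proving a cycle of implications, using throughout two elementary monotonicity facts. Passing to a subsequence $(x_{n_k})$ of $(x_n)$ can only shrink the cluster set, so $\clu{x_{n_k}}\subseteq\clu{x_n}$; consequently $\de{x_{n_k}}=\diam\clu{x_{n_k}}\le\de{x_n}$ while, the infimum being taken over a smaller set, $\dist(\clu{x_{n_k}},X)\ge\dist(\clu{x_n},X)$. (All cluster sets occurring are nonempty by $w^*$-compactness of bounded sets in $X^{**}$.) With these in hand, $(i_c)\implies(ii_c)$ is immediate, since $\dist(\clu{x_n},X)\le\dh(\clu{x_n},X)$, and I note for later use the trivial inequality $\wde{x_n}\le\de{x_n}$.

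For the equivalence of $(ii_c)$, $(iii_c)$, $(iv_c)$ I would close the cycle $(ii_c)\implies(iii_c)\implies(iv_c)\implies(ii_c)$. For $(ii_c)\implies(iii_c)$, apply $(ii_c)$ to an arbitrary subsequence and combine the two monotonicity facts, $\dist(\clu{x_n},X)\le\dist(\clu{x_{n_k}},X)\le c\de{x_{n_k}}$, then take the infimum over increasing $(n_k)$ to reach $\dist(\clu{x_n},X)\le c\wde{x_n}$. For $(iii_c)\implies(iv_c)$, given bounded $A$ and $(x_n)\subset A$, condition $(iii_c)$ gives $\dist(\clu{x_n},X)\le c\wde{x_n}\le c\beta_w(A)$, and the supremum over such sequences yields $\wck{A}\le c\beta_w(A)$. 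To return, $(iv_c)\implies(ii_c)$, I would apply $(iv_c)$ to the range $A=\{x_n\setsep n\in\en\}$, where $\dist(\clu{x_n},X)\le\wck{A}$ holds directly from the definition of $\wck{\cdot}$, so it only remains to verify the auxiliary estimate $\beta_w(\{x_n\setsep n\in\en\})\le\de{x_n}$. This holds because any sequence drawn from the range either repeats a value infinitely often, giving a constant subsequence and hence $\wde{\cdot}=0$, or admits a subsequence that is an honest subsequence of $(x_n)$, for which $\wde{\cdot}\le\de{\cdot}\le\de{x_n}$ by monotonicity of $\de{\cdot}$.

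The main obstacle is the final implication $(iv_c)\implies(i_{2c})$, where the constant $2$ must be produced: passing from $\dist(\clu{x_n},X)$ to the excess $\dh(\clu{x_n},X)$ by the triangle inequality only delivers $c+1$, which is too large when $c<1$. My plan is to route the factor $2$ through the already available comparison \eqref{eq:wck-wk}. The key observation is that for the range $A=\{x_n\setsep n\in\en\}$ one has $\dh(\clu{x_n},X)=\wk{A}$: the $w^*$-closure of $A$ in $X^{**}$ is the union of the points $x_n$ with the cluster set $\clu{x_n}$, and since each $x_n$ lies in $X$ it contributes zero to the excess, so the excess of the closure equals that of $\clu{x_n}$. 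Then \eqref{eq:wck-wk} yields $\dh(\clu{x_n},X)=\wk{A}\le2\wck{A}$, while $(iv_c)$ together with $\beta_w(A)\le\de{x_n}$ gives $\wck{A}\le c\beta_w(A)\le c\de{x_n}$; combining these delivers $\dh(\clu{x_n},X)\le2c\de{x_n}$, which is precisely $(i_{2c})$.
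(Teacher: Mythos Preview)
Your proof is correct and follows essentially the same route as the paper's: the same cycle of implications, the same passage via $A=\{x_n\setsep n\in\en\}$, the identity $\dh(\clu{x_n},X)=\wk{A}$, and the appeal to \eqref{eq:wck-wk} to manufacture the factor $2$ in $(iv_c)\implies(i_{2c})$. The only cosmetic difference is that you argue $(iv_c)\implies(ii_c)$ directly while the paper argues by contrapositive; your justification of the auxiliary inequality $\beta_w(\{x_n\})\le\de{x_n}$ via the dichotomy ``constant subsequence versus honest subsequence of $(x_n)$'' is in fact a touch more careful than the paper's one-line claim.
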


\begin{proof}
    Implications $(i_c)\implies(ii_c)$ and $(iii_c)\implies(iv_c)$ are trivial. 

    $(ii_c)\implies(iii_c)$: Assume $(ii_c)$ holds and let $(x_n)$ be a bounded sequence in $X$. For any subsequence $(x_{n_k})$ we have
    $$\dist(\clu{x_n},X)\le \dist(\clu{x_{n_k}},X)\le c\de{x_{n_k}}.$$
    Indeed, the first inequality follows from the obvious inclusion $\clu{x_{n_k}}\subset\clu{x_n}$ and the second one follows from $(ii_c)$. By passing to the infimum we get
     $$\dist(\clu{x_n},X)\le  c\wde{x_{n}},$$ which completes the argument.

     $(iv_c)\implies (ii_c)$: Assume $(ii_c)$ fails. It means that there is a bounded sequence $(x_n)$ with $\dist(\clu{x_n},X)> c\de{x_n}$. Set $A=\{x_n\setsep n\in \en\}$. Then
     $$\wck{A}\ge \dist(\clu{x_n},X)> c\de{x_n}\ge c\beta_w(A).$$
     The last inequality follows from the fact that given a sequence $(y_k)$ in $A$, we have
     $$\wde{y_k}\le\de{y_k}\le \de{x_n}.$$
     We conclude that $(iv_c)$ fails.

     $(iv_c)\implies (i_{2c})$. Assume $(iv_c)$ holds and let $(x_n)$ be a bounded sequence in $X$. Let $A=\{x_n\setsep n\in\en\}$. Then
     $$\dh(\clu{x_n},X)=\wk{A}\le2\wck{A}\le2c\beta_w(A)\le2c\de{x_n},$$
     where the first inequality follows from \eqref{eq:wck-wk} and the last one follows from the argument in the proof of the previous implication.
\end{proof}

\begin{cor}\label{cor:qwsc}
    Let $X$ be a Banach space. Then quantities $\wck{\cdot}$ and $\beta_w(\cdot)$ are equivalent in $X$ if and only if $X$ enjoys a quantitative version of weak sequential completeness. In particular, these quantities are not equivalent for the space from \cite[Example 4]{wesecom}.
\end{cor}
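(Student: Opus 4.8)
The plan is to read this corollary off Proposition~\ref{P:wsc}, which already carries all the quantitative content; the only remaining task is to translate the words ``equivalent'' and ``quantitative version of weak sequential completeness'' into the conditions $(iv_c)$ and $(i_c)$ appearing there. First I would recall that the inequality $\beta_w(A)\le 2\wck{A}$ holds for \emph{every} bounded $A\subset X$ (this is Lemma~\ref{L:betaw-wck}, and is also the first inequality displayed at the opening of this section). Consequently one of the two inequalities needed for equivalence of $\wck{\cdot}$ and $\beta_w(\cdot)$ is automatic, and equivalence of the two quantities amounts exactly to the existence of a single constant $c>0$ with $\wck{A}\le c\,\beta_w(A)$ for all bounded $A$, i.e.\ to the validity of condition $(iv_c)$ for \emph{some} $c>0$.

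Next I would interpret ``$X$ enjoys a quantitative version of weak sequential completeness'' as saying that $X$ is $c$-wsc for some $c>0$, which is precisely condition $(i_c)$ holding for some $c>0$. The equivalence of these two existential statements is then immediate from the implication chain of Proposition~\ref{P:wsc}: since $(i_c)\implies(iv_c)$ and $(iv_c)\implies(i_{2c})$, the assertion ``$(i_c)$ holds for some $c$'' and the assertion ``$(iv_c)$ holds for some $c$'' coincide, the constant merely changing by a factor of $2$. Combining this with the reduction of the first paragraph yields the stated biconditional.

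Finally, for the concluding ``in particular'' I would invoke the space $X$ from \cite[Example 4]{wesecom}, which is weakly sequentially complete yet fails to be $c$-wsc for any finite $c$; by the biconditional just established, $\wck{\cdot}$ and $\beta_w(\cdot)$ cannot be equivalent on this $X$. I do not expect any genuine obstacle here, since the whole difficulty has been absorbed into Proposition~\ref{P:wsc}. The single point requiring a little care is the bookkeeping of the constant: one must note that both ``equivalent'' and ``admits quantitative weak sequential completeness'' are purely existential in the constant, so that the factor-$2$ loss in the chain $(i_c)\Rightarrow(iv_c)\Rightarrow(i_{2c})$ is harmless and no sharpness of the constant is needed.
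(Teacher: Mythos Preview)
Your proposal is correct and is exactly the intended argument: the paper states this corollary without proof, treating it as an immediate consequence of Proposition~\ref{P:wsc} together with Lemma~\ref{L:betaw-wck}, and your write-up spells out precisely that derivation. The bookkeeping you mention (the harmless factor~$2$ and the existential reading of ``equivalent'' and ``quantitative wsc'') is the only thing there is to check, and you have handled it correctly.
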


\begin{cor}\label{cor:Lembedded}
     Let $X$ be an $L$-embedded Banach space. Then $\beta_w(A)=2\wck{A}$ for each bounded set $A\subset X$.
\end{cor}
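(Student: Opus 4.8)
The plan is to establish the equality by squeezing $\beta_w(A)$ between two matching bounds: the general upper estimate $\beta_w(A)\le 2\wck{A}$ from Lemma~\ref{L:betaw-wck}, valid in every Banach space, and a reverse estimate that holds specifically for $L$-embedded spaces. The key observation is that this reverse estimate is nothing but condition $(iv_c)$ of Proposition~\ref{P:wsc} for the appropriate value of $c$.

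First I would recall, as noted in Section~\ref{sec:prel}, that every $L$-embedded Banach space is $\frac12$-wsc by \cite[Theorem 1]{wesecom}. Since being $c$-wsc is precisely condition $(i_c)$ of Proposition~\ref{P:wsc}, the space $X$ satisfies $(i_{1/2})$. Next I would feed this into the implication chain of that proposition: applying $(i_c)\implies(iv_c)$ with $c=\frac12$ yields $\wck{A}\le\frac12\beta_w(A)$ for every bounded $A\subset X$, that is, $2\wck{A}\le\beta_w(A)$.

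Combining this lower bound with the upper bound of Lemma~\ref{L:betaw-wck} gives
$$\beta_w(A)\le2\wck{A}\le\beta_w(A),$$
which forces equality throughout and proves the claim.

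There is essentially no obstacle here beyond assembling the pieces, since all the analytic content has already been absorbed into Proposition~\ref{P:wsc} and into the cited fact that $L$-embedded spaces are $\frac12$-wsc. The only point worth stressing is that the constant $\frac12$ matches exactly: it is precisely this numerical coincidence that makes the two opposite inequalities meet and produces the sharp equality $\beta_w(A)=2\wck{A}$ rather than a mere equivalence of the two measures of weak non-compactness.
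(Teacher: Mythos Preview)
Your proof is correct and follows exactly the same approach as the paper: both combine the general upper bound $\beta_w(A)\le 2\wck{A}$ from Lemma~\ref{L:betaw-wck} with the implication $(i_{1/2})\implies(iv_{1/2})$ of Proposition~\ref{P:wsc}, the hypothesis $(i_{1/2})$ being supplied by the $\frac12$-wsc property of $L$-embedded spaces from \cite[Theorem~1]{wesecom}.
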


\begin{proof}
    Inequality `$\le$' holds always by Lemma~\ref{L:betaw-wck}. If $X$ is $L$-embedded, \cite[Theorem 1]{wesecom} shows that $X$ is $\frac12$-wsc, i.e., condition $(i_{\frac12})$ from Proposition~\ref{P:wsc} is fulfilled. Thus Proposition~\ref{P:wsc} yields inequality `$\ge$'.
\end{proof}

\section{Quantitative Schur property}\label{sec:schur}

Recall that a Banach space $X$ has the \emph{Schur property} (or, it is a \emph{Schur space}) if any weakly convergent sequence is norm-convergent. It is known and easy to see that the Schur property implies weak sequential completeness. Thus in Schur spaces $\beta_w$ is a measure of non-compactness. Clearly $\beta_w\le\beta$, a possible converse is addressed in the following proposition. Note that condition $(ii_c)$ means that $X$ has the $c$-Schur property.

\begin{prop}\label{P:schur}
  Let $X$ be a Banach space. Given $c>0$, consider the following conditions:
    \begin{enumerate}[$(i_c)$]
       \item $\limsup\norm{x_n}\le c\cdot\dh(\clu{x_n},\{0\})$ for each bounded sequence $(x_n)$ in $X$;
       \item $\ca{x_n}\le c\de{x_n}$ for each bounded sequence $(x_n)$ in $X$;
       \item $\wca{x_n}\le c\de{x_n}$ for each bounded sequence $(x_n)$ in $X$;
        \item $\wca{x_n}\le c\wde{x_n}$ for each bounded sequence $(x_n)$ in $X$;
        \item $\beta(A)\le c\beta_w(A)$ for each bounded set $A\subset X$.
    \end{enumerate}
   For each $c>0$ we then have 
  $$(i_c)\iff (ii_c)\implies (iii_c)\iff(iv_c)\iff (v_c)\implies (i_{2c+1}).$$
\end{prop}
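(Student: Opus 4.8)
The plan is to treat the block $(iii_c)\iff(iv_c)\iff(v_c)$ as routine bookkeeping with the quantities $\ca{\cdot},\wca{\cdot},\de{\cdot},\wde{\cdot}$, to establish $(i_c)\iff(ii_c)$ by two small constructions, and to close the scheme with the genuinely quantitative step $(v_c)\implies(i_{2c+1})$. I will repeatedly use the identities recalled in Section~\ref{sec:prel}, namely $\de{x_n}=\diam\clu{x_n}$ and $\dh(\clu{x_n},\{0\})=\sup\{\norm{u}\setsep u\in\clu{x_n}\}$, the resulting inequality $\de{x_n}\le2\dh(\clu{x_n},\{0\})$, and the monotonicity facts that $\de{\cdot}$ does not increase while $\wca{\cdot}$ does not decrease under passing to a subsequence (both follow at once from $\clu{x_{n_k}}\subset\clu{x_n}$). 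All of these are insensitive to the scalar field, so the argument is uniform for real and complex $X$.

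The implications $(ii_c)\implies(iii_c)$, $(iv_c)\implies(iii_c)$ and $(iv_c)\implies(v_c)$ are immediate from $\wca{x_n}\le\ca{x_n}$, from $\wde{x_n}\le\de{x_n}$, and from taking suprema over sequences in a fixed bounded set. For $(iii_c)\implies(iv_c)$ I would fix $(x_n)$, apply $(iii_c)$ to an arbitrary subsequence to get $\wca{x_n}\le\wca{x_{n_k}}\le c\de{x_{n_k}}$, and pass to the infimum over subsequences. For $(v_c)\implies(iv_c)$ I would fix $(x_n)$, pick a subsequence $(x_{m_j})$ with $\de{x_{m_j}}\le\wde{x_n}+\ep$, and set $B=\{x_{m_j}\setsep j\in\en\}$: any sequence in $B$ either has a constant subsequence or, after deleting repetitions, a subsequence of $(x_{m_j})$, so $\beta_w(B)\le\wde{x_n}+\ep$, while $\beta(B)\ge\wca{x_{m_j}}\ge\wca{x_n}$; then $(v_c)$ yields $\wca{x_n}\le c(\wde{x_n}+\ep)$ and $\ep\to0$ finishes.

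For $(i_c)\implies(ii_c)$ I would realize the oscillation by a difference sequence: choose $j_1<k_1<j_2<k_2<\cdots$ with $\norm{x_{j_m}-x_{k_m}}\ge\ca{x_n}-\frac1m$ and put $w_m=x_{j_m}-x_{k_m}$. Every weak$^*$-cluster point of $(w_m)$ is of the form $a-b$ with $a,b\in\clu{x_n}$, hence $\dh(\clu{w_m},\{0\})\le\de{x_n}$, while $\limsup\norm{w_m}\ge\ca{x_n}$; applying $(i_c)$ to $(w_m)$ gives $\ca{x_n}\le c\de{x_n}$. The reverse implication $(ii_c)\implies(i_c)$ is the delicate one, since interleaving $(x_n)$ with zeros only produces $(i_{2c})$. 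Instead I would interleave $(x_n)$ with its negative, $z_{2k-1}=x_k$ and $z_{2k}=-x_k$. The cluster set of an interleaved sequence is the union of the cluster sets of the two interleaved subsequences, so $\clu{z_n}=\clu{x_n}\cup(-\clu{x_n})$ and the triangle inequality gives $\de{z_n}\le2\dh(\clu{x_n},\{0\})$; on the other hand each tail of $(z_n)$ contains pairs $x_k,-x_k$, so $\ca{z_n}\ge2\limsup\norm{x_n}$. Now $(ii_c)$ applied to $(z_n)$ reads $2\limsup\norm{x_n}\le2c\dh(\clu{x_n},\{0\})$, which is exactly $(i_c)$.

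Finally I would close the scheme via $(iv_c)\implies(i_{2c+1})$ (equivalent to $(v_c)\implies(i_{2c+1})$). Fix $(x_n)$, set $L=\limsup\norm{x_n}$ and $R=\dh(\clu{x_n},\{0\})$, and pass to a subsequence with $\norm{x_n}\to L$, which only decreases $R$. From $(iv_c)$ and $\wde{x_n}\le\de{x_n}\le2R$ I get $\wca{x_n}\le2cR$, so some subsequence $(x_{m_j})$ satisfies $\ca{x_{m_j}}\le2cR+\ep$. The key elementary observation is that if a tail $T=\{x_{m_j}\setsep j\ge J\}$ has $\diam T\le2cR+2\ep$, then by weak$^*$-lower semicontinuity of the bidual norm its weak$^*$-closure has the same diameter bound and contains $\clu{x_{m_j}}$; hence every $x_{m_j}$ with $j\ge J$ is within $2cR+2\ep$ of some $u\in\clu{x_{m_j}}\subset\clu{x_n}$, where $\norm{u}\le R$. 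Thus $\norm{x_{m_j}}\le(2c+1)R+2\ep$, and letting $j\to\infty$ and $\ep\to0$ gives $L\le(2c+1)R$, as required. The main obstacle is the sharp implication $(ii_c)\implies(i_c)$, handled by the antisymmetric interleaving; the remaining delicacy is the constant in the last step, where the factor $2$ comes from $\de{\cdot}\le2\dh(\cdot,\{0\})$ and the extra $1$ from bounding a norm by the norm of a cluster point plus the oscillation.
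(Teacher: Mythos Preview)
Your proof is correct. For $(i_c)\iff(ii_c)$ and the block $(iii_c)\iff(iv_c)\iff(v_c)$ your arguments match the paper's almost verbatim; the antisymmetric interleaving $z_{2k-1}=x_k$, $z_{2k}=-x_k$ for $(ii_c)\implies(i_c)$ and the difference sequence for $(i_c)\implies(ii_c)$ are exactly the constructions used there. The only routing difference in the middle block is that the paper closes the cycle via $(v_c)\implies(iii_c)$ (taking $A=\{x_n\}$ directly) rather than your $(v_c)\implies(iv_c)$, but this is cosmetic.

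The genuine difference is in the final step. The paper proves $(iii_c)\implies(i_{2c+1})$ by contradiction: it first derives the qualitative Schur property from $(iii_c)$, then assumes $(i_{2c+1})$ fails, normalizes, and inductively constructs norming functionals $x_k^*$ to exhibit a $2c$-separated subsequence whose $\delta$ is below $2$, contradicting $(iii_c)$. Your argument is direct: after passing to a subsequence with $\norm{x_n}\to L$, you use $\de{x_n}\le 2R$ and $(iv_c)$ to find a subsequence whose tail $T$ has diameter at most $2cR+2\ep$, then observe that $\overline{T}^{w^*}$ has the same diameter (weak$^*$-lower semicontinuity of the bidual norm) and contains a cluster point $u$ with $\norm{u}\le R$, giving $\norm{x_{m_j}}\le(2c+1)R+2\ep$. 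This is shorter, avoids the auxiliary Schur-property step and the functional construction, and makes the provenance of the constant $2c+1$ completely transparent: $2c$ from $\de{\cdot}\le 2\dh(\cdot,\{0\})$ and $+1$ from $\norm{x}\le\norm{u}+\norm{x-u}$. Both routes yield the same constant; yours is the cleaner of the two.
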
   

\begin{proof}

    $(i_c)\implies(ii_c)$: Assume $(i_c)$ holds and let $(x_n)$ be a bounded sequence in $X$. If $\ca{x_n}=0$, the inequality in $(ii_c)$ clearly holds. Assume $\ca{x_n}>0$ and fix an arbitrary $d\in(0,\ca{x_n})$. Fix $\ep>0$ such that $\ca{x_n}>d+\ep$. Then we may find increasing sequences
    $$l_1<m_1<l_2<m_2<\dots$$
    such that $\norm{x_{l_k}-x_{m_k}}>d+\ep$ for each $k\in\en$. Set $y_k=x_{l_k}-x_{m_k}$. It follows from $(i_c)$ that $\dh(\clu{y_k},\{0\})\ge\frac{d+\ep}{c}$. Thus there is $y^{**}\in\clu{y_k}$ with $\norm{y^{**}}>\frac dc$. Since $y^{**}$ is clearly the difference of some elements of $\clu{x_n}$, we deduce that
    $$\de{x_n}=\diam\clu{x_n}>\tfrac dc.$$
    Since $d$ is arbitrary, the argument is complete.

     $(ii_c)\implies (i_c)$: Assume $(ii_c)$ holds and let $(x_n)$ be a bounded sequence in $X$.  If $\limsup\norm{x_n}=0$, the inequality in $(i_c)$ clearly holds. So, assume that $\limsup \norm{x_n}>0$ and fix and arbitrary $d\in (0,\limsup\norm{x_n})$. 
     Denote by $(y_n)$ the sequence
    $$x_1,-x_1,x_2,-x_2,x_3,-x_3,\dots$$
    Then $\ca{y_n}>2d$ and hence property $(ii_c)$ yields $\de{y_n}>\frac{2d}{c}$. So, there are $y_1^{**},y_2^{**}\in\clu{y_n}$ with $\norm{y_1^{**}-y_2^{**}}>\frac{2d}{c}$. By the triangle inequality 
    we deduce that $\norm{y_1^{**}}>\frac dc$ or $\norm{y_2^{**}}>\frac dc$. Since weak$^*$-cluster points of $(y_n)$ are clearly weak$^*$-cluster points either of $(x_n)$ or of $(-x_n)$, we deduce 
    $\dh(\clu{x_n},\{0\})>\frac dc$, which completes the argument.

    $(ii_c)\implies(iii_c)$: This is trivial as $\wca{x_n}\le \ca{x_n}$.
  
    $(iii_c)\implies(iv_c)$: Assume $(iii_c)$ holds and let $(x_n)$ be a bounded sequence in $X$. Let $(x_{n_k})$ be any subsequence of $(x_n)$. Then
    $$\wca{x_n}\le\wca{x_{n_k}}\le\de{x_{n_k}}.$$
    By passing to the infimum we deduce $\wca{x_n}\le c\wde{x_n}$.
    
    $(iv_c)\implies(v_c)$: This follows immediately from the definitions.

    $(v_c)\implies(iii_c)$: Assume $(v_c)$ holds and let $(x_n)$ be a bounded sequence in $X$.
    Let $A=\{x_{n}\setsep n\in\en\}$. Using definitions and validity of $(v_c)$ we get
    $$\wca{x_n}\le  \beta(A)\le c\beta_w(A)\le c\de{x_{n}},$$
    which completes the argument.

    $(iii_c)\implies(i_{2c+1})$: Assume that $(iii_c)$ holds. Then $X$ has the Schur property. Indeed, if $(x_n)$ is weakly convergent, then $\de{x_n}=0$. It follows that $\wca{x_n}=0$. The same holds for any subsequence of $(x_n)$, hence $\beta(\{x_n\setsep n\in\en\})=0$. I.e., $\{x_n\setsep n\in\en\}$ is a relatively compact set. Since $(x_n)$ is weakly convergent, it easily follows that it is also norm convergent. This proves the Schur property.

    Now we proceed by contradiction. Assume $(i_{2c+1})$ fails. Then there is a bounded sequence $(x_n)$ such that $\limsup\norm{x_n}>(2c+1)\dh(\clu{x_n},\{0\})$. Since $X$ has the Schur property, necessarily $\dh(\clu{x_n},\{0\})>0$. Up to multiplying by a positive constant we may assume that $\dh(\clu{x_n},\{0\})<1$ and $\limsup\norm{x_n}>2c+1$. Fix $d<1$ such that $\dh(\clu{x_n},\{0\})<d$. Up to passing to a subsequence we may assume that $\norm{x_{n}}>2c+1$ for each $n\in\en$. 
    By induction we construct an increasing sequence $(n_k)$ of positive integers and a sequence $(x_k^*)$ of norm-one elements of $X^*$ such that the following conditions are fulfilled.
    \begin{enumerate}[$(a)$]
        \item $n_1=1$;
        \item $\abs{\ip{x_k^*}{x_{n_{k}}}}>2c+1$;
        \item $\abs{x_k^*(x_j)}<d$ for $j\ge n_{k+1}$.
    \end{enumerate}

    Indeed, we may surely set $n_1=1$. Assume that $k\in\en$ and that we have already constructed $n_k$.
    Since $\norm{x_{n_k}}>2c+1$, we may find a norm-one element $x_k^*\in X^*$ such that $(b)$ is satisfied. Further, 
    $$\limsup_n\abs{x_k^*(x_n)}\le\dh(\clu{x_n},\{0\})<d,$$ 
    so we may find $n_{k+1}$ fulfilling $(c)$. This completes the construction.
    
    Fix $k,l\in\en$ such that $k<l$. Then
    $$\norm{x_{n_k}-x_{n_l}}\ge\abs{\ip{x_k^*}{x_{n_k}}-\ip{x_k^*}{x_{n_l}}}>2c+1-d>2c.$$
    Thus $\wca{x_{n_k}}\ge 2c$
    On the other hand,
$$\clu{x_{n_k}}\subset \clu{x_n}\subset U(0,d),$$
thus $\de{x_{n_k}}\le 2d<2$. This contradicts $(iii_c)$.                
\end{proof}

\begin{remark} 
    Example~\ref{ex:LF} below provides a space with satisfies $(iii_1)$, $(i_2)$ but not $(i_c)$ for any $c<2$.
    It follows that some increase of the constant $c$ in implication $(iii)\implies (i)$ is necessary.
    However, it is not clear whether the result above is optimal.
\end{remark}

\begin{cor}\label{cor:qschur}
    Let $X$ be a Banach space. The quantities
    $$\beta_w(\cdot),\wck{\cdot},\omega(\cdot),\chi(\cdot)$$
    are equivalent if and only if $X$ enjoys the quantitative Schur property.
\end{cor}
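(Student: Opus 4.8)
The plan is to prove Corollary~\ref{cor:qschur} by reducing it to the chain of equivalences already packaged in Proposition~\ref{P:schur}, together with the standing comparisons among the four quantities. First I would record the a~priori ordering of the quantities. We always have $\beta_w(A)\le 2\wck{A}$ by Lemma~\ref{L:betaw-wck}, $\wck{A}\le\wk{A}\le2\wck{A}$ by \eqref{eq:wck-wk}, and $\wk{A}\le\omega(A)$ as noted in the introduction, so $\wck{\cdot}$ and $\omega(\cdot)$ are comparable to within $\wk{\cdot}$; moreover $\chi$ and $\beta$ are equivalent via $\chi(A)\le\beta(A)\le2\chi(A)$. The only inequality among the four that can genuinely fail to have a reverse is $\beta(A)\le c\,\beta_w(A)$, i.e. the passage from weak non-precompactness back to non-compactness, which is exactly the content of condition $(v_c)$ in Proposition~\ref{P:schur}. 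Thus the substantive equivalence is between ``the four quantities are mutually equivalent'' and ``$(v_c)$ holds for some $c$.''

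The forward direction is the easy one: if $X$ has the quantitative Schur property, then by definition it satisfies $(ii_c)$, hence $(v_c)$, for some $c>0$, giving $\beta(A)\le c\,\beta_w(A)$; combining this with the universal inequalities above yields $\beta_w\le2\wck\le2\omega$ and $\omega\le\chi\cdot(\text{const})$ through $\beta$, so all four quantities are squeezed between constant multiples of each other. Here I would need to check that $\omega$ sits in the chain, using that in a Schur space $\beta_w$ is a genuine measure of non-compactness (Schur $\Rightarrow$ weakly sequentially complete, so weakly precompact sets are relatively weakly compact, and relative weak compactness together with the Schur property forces relative norm-compactness); this justifies bounding $\omega(A)$ by a multiple of $\chi(A)$, e.g. via $\omega\le\chi$ trivially or the reverse comparison, and threading it through $\wk{\cdot}$.

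For the converse, suppose the four quantities are equivalent in $X$. Then in particular $\beta(A)\le c\,\beta_w(A)$ for some constant $c>0$ and all bounded $A$, which is precisely $(v_c)$. By Proposition~\ref{P:schur} the chain $(v_c)\implies(i_{2c+1})\iff(ii_{2c+1})$ holds, so $X$ satisfies $(ii_{2c+1})$, i.e. it has the $(2c+1)$-Schur property, hence the quantitative Schur property. I expect the main obstacle to be purely bookkeeping: confirming that $\omega$ and $\chi$ can legitimately be inserted into the equivalence, since Proposition~\ref{P:schur} only directly involves $\beta,\beta_w,\wca{\cdot},\de{\cdot},\wde{\cdot}$ and $\wck{\cdot}$ enters only implicitly. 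The cleanest route is to observe that once $\beta\sim\beta_w$ is available, the Schur property makes $\chi\sim\beta\sim\beta_w$ automatic, and the De Blasi measure $\omega$ is trapped between $\wk{\cdot}$ (hence $\wck{\cdot}$, hence $\beta_w$) from below and $\chi$ from above, so no quantity can escape the common equivalence class. I would state the constants loosely and let the reader trace them through Proposition~\ref{P:schur}.
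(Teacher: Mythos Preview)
Your proposal is correct and follows essentially the same approach as the paper: write the universal chain of inequalities
\[
\beta_w(A)\le 2\,\wck{A}\le 2\,\omega(A)\le 2\,\chi(A)\le 2\,\beta(A)\le 4\,\chi(A),
\]
and then invoke Proposition~\ref{P:schur} to see that the only missing link, $\beta\le c\,\beta_w$, is exactly condition $(v_c)$ and hence equivalent to the quantitative Schur property. Your detour worrying about whether $\omega$ fits in the chain is unnecessary: $\omega(A)\le\chi(A)$ holds in every Banach space simply because finite sets are weakly compact, so no appeal to the Schur property is needed there.
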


\begin{proof}
    In general we have
    $$\beta_w(A)\le 2\wck{A}\le2\omega(A)\le2\chi(A)\le2\beta(A)\le 4\chi(A).$$
    By Proposition~\ref{P:schur} we see that $\beta_w$ is equivalent to $\beta$ if and only if $X$ has the quantitative Schur property. This completes the argument.
\end{proof}

Let us remark that the previous corollary is a bit extended version of \cite[Theorem 2.1(ii)]{qschur-dp}.
By combining Corollary~\ref{cor:qschur} with Corollary~\ref{cor:qwsc} we get the following corollary:

\begin{cor}
   Let $X$ be a Banach space which is quantitatively weakly sequentially complete and has the Schur property. Then quantities $\wk{\cdot}$ and $\omega(\cdot)$ are equivalent in $X$ if and only if $X$ has the quantitative Schur property. 
\end{cor}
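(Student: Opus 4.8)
The plan is to deduce the statement by combining Corollary~\ref{cor:qschur} with Corollary~\ref{cor:qwsc}, the only genuinely new ingredient being the observation that in a Schur space the De~Blasi measure $\omega$ coincides with the Hausdorff measure $\chi$.

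First I would record two facts valid in any Banach space, for every bounded $A\subset X$: the inequalities $\wck{A}\le\wk{A}\le2\wck{A}$ from \eqref{eq:wck-wk}, which say that $\wk{\cdot}$ and $\wck{\cdot}$ are always equivalent, and the chain $\chi(A)\le\beta(A)\le2\chi(A)$, which says that $\chi$ and $\beta$ are always equivalent. Consequently $\wk{\cdot}$ and $\omega(\cdot)$ are equivalent if and only if $\wck{\cdot}$ and $\omega(\cdot)$ are.

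The key step is to show that $\omega(A)=\chi(A)$ for every bounded $A\subset X$ whenever $X$ is a Schur space. The inequality $\omega\le\chi$ is trivial, since every finite set is weakly compact. For the reverse I would use that in a Schur space every weakly compact set $K$ is norm compact: by the Eberlein--\v{S}mulyan theorem any sequence in $K$ has a weakly convergent subsequence, which is norm-convergent by the Schur property, so $K$ is norm sequentially compact. Hence $K$ can be approximated in the Hausdorff excess by a finite subset, and feeding this into the definition of $\chi$ gives $\chi(A)\le\dh(A,K)+\ep$ for every weakly compact $K$ and every $\ep>0$, whence $\chi\le\omega$.

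Finally I would assemble the two directions. For the forward implication, assume $\wk{\cdot}\sim\omega(\cdot)$; then $\wck{\cdot}\sim\omega(\cdot)$ by the first paragraph, while $X$ being quantitatively weakly sequentially complete gives $\wck{\cdot}\sim\beta_w(\cdot)$ by Corollary~\ref{cor:qwsc}, and the Schur property gives $\omega=\chi$. Chaining these shows that $\beta_w(\cdot),\wck{\cdot},\omega(\cdot),\chi(\cdot)$ are all equivalent, so $X$ has the quantitative Schur property by Corollary~\ref{cor:qschur}. For the converse, if $X$ has the quantitative Schur property then Corollary~\ref{cor:qschur} already yields $\wck{\cdot}\sim\omega(\cdot)$, and combining with \eqref{eq:wck-wk} gives $\wk{\cdot}\sim\omega(\cdot)$. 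The main (and essentially only) obstacle is the Schur-space identity $\omega=\chi$; the remainder is bookkeeping with the cited corollaries, and I note that the quantitative weak sequential completeness hypothesis is used only in the forward direction.
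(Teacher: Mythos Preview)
Your proposal is correct and follows the paper's approach exactly: the paper simply states that the corollary follows ``by combining Corollary~\ref{cor:qschur} with Corollary~\ref{cor:qwsc}'' without further detail, and you have filled in the bookkeeping. The observation you single out---that in a Schur space weakly compact sets are norm compact, whence $\omega=\chi$---is indeed the point where the Schur hypothesis enters and is needed to bring $\chi$ into the chain $\beta_w\sim\wck{\cdot}\sim\omega$ so that Corollary~\ref{cor:qschur} can be applied in the forward direction; the paper leaves this implicit.
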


Some applications of this corollary are collected in the following example.

\begin{example2}
(1) Let $X$ be the space from \cite[Example 1.4]{qschur}. Then $X$ is $L$-embedded and thus $\frac12$-wsc. Further, it has the Schur property, but not its quantitative version. Hence, quantities $\wk{\cdot}$ and $\omega(\cdot)$ are not equivalent in $X$.

(2) Let $X$ be the space from  \cite[Example 10.1]{qdpp}. Then $X^*$ is $L$-embedded and has the Schur property. Moreover, quantities $\wk{\cdot}$ and $\omega(\cdot)$ are not equivalent in $X^*$. Thus $X^*$ fails the quantitative Schur property. (This also follows from \cite[Theorem 2.1]{qschur-dp}.)
 
(3) It follows by combining \cite[Proposition 17]{petitjean-schur} with \cite[Theorem 3.2]{p1u} that the Lipschitz-free space over a proper purely $1$-unrectifiable metric space has the $1$-Schur property. Therefore, in such spaces quantities $\wk{\cdot}$ and $\omega(\cdot)$ are equivalent.
\end{example2}

In the literature another quantification of the Schur property appears as well.  In \cite[p. 57]{Go-Ka-Li} a Banach space $X$ is said to have \emph{the $1$-strong Schur property} if for any $\delta\in(0,2]$, any $\varepsilon>0$ and any normalized $\delta$-separated sequence in $X$ there is a subsequence which is $(\frac2\delta+\varepsilon)$-equivalent to the standard basis of $\ell_1$. This notion is used also in \cite{p1u}. In \cite[Proposition 4.1]{qschur} is proved that, for real Banach spaces, the $1$-Schur property implies the $1$-strong Schur property and that the $1$-strong Schur property implies the $5$-Schur property. We now provide a more precise version of this relationship.

\begin{prop}\label{P:strongschur}
Let $X$ be Banach space and let $c\ge 1$. Consider property $(iv_c)$ from Proposition~\ref{P:schur} ant the following property:
\begin{enumerate}[$(vi_c)$]
    \item For any $\delta\in(0,2]$, any $\varepsilon>0$ and any normalized $\delta$-separated sequence in $X$ there is a subsequence which is $(\frac{2c}\delta+\varepsilon)$-equivalent to the standard basis of $\ell_1$.
\end{enumerate}   
Then
\begin{itemize}
    \item $(iv_c)\iff(vi_c)$ if $X$ is a real space;
    \item $(vi_c)\implies (iv_c)\implies(vi_{\pi c/2})$ if $X$ is a complex space.
\end{itemize}
\end{prop}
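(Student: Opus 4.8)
The plan is to relate the $1$-strong Schur property, which is phrased in terms of $\ell_1$-equivalence of subsequences, to the quantity-based condition $(iv_c)$, which compares $\wca{\cdot}$ with $\wde{\cdot}$. The bridge between these two languages is Theorem~\ref{T:rosenthal}, which already connects $\wde{\cdot}$ to the lower $\ell_1$-estimate quantities $\dcj{\cdot}$ and $\dcjr{\cdot}$. Indeed, a subsequence being $K$-equivalent to the $\ell_1$-basis is essentially the statement that its lower $\ell_1$-estimate quantity is at least $\frac1K$ of its sup-norm, so condition $(vi_c)$ is really a statement controlling $\dcjr{\cdot}$ (in the real case) or $\dcj{\cdot}$ (in the complex case) from below by the separation constant $\delta$. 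The first step is to make this dictionary precise: for a normalized $\delta$-separated sequence, observe that $\wca{\cdot}$ is governed by $\delta$ while $\wde{\cdot}$ is, by parts $(d)$ and $(e)$ of Theorem~\ref{T:rosenthal}, controlled by $\dcjr{\cdot}$ or $\dcj{\cdot}$.

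For the real case, I would prove both implications. For $(vi_c)\implies(iv_c)$: given a bounded sequence with $\wca{x_n}>0$, I would extract a normalized $\delta$-separated subsequence (after translating/rescaling differences as in the proof of Proposition~\ref{P:schur}), apply $(vi_c)$ to get a further subsequence $(\frac{2c}\delta+\ep)$-equivalent to $\ell_1$, and read off a lower bound on $\dcjr{\cdot}$, which via Theorem~\ref{T:rosenthal}$(d)$ (i.e.\ $\wde{\cdot}\le2\dcjr{\cdot}$) forces $\wde{x_n}$ to be large enough relative to $\wca{x_n}$. For the converse $(iv_c)\implies(vi_c)$: starting from a normalized $\delta$-separated sequence, condition $(iv_c)$ bounds $\wde{\cdot}$ below in terms of $\wca{\cdot}\ge\delta$, and then combining with parts $(b)$ and $(c)$ of Theorem~\ref{T:rosenthal} (relating $\wde{\cdot}$ and $\dcjr{\cdot}$, $\cjr{\cdot}$) I would extract a subsequence whose $\cjr{\cdot}$ is close to $\frac\delta{2c}$, which translates into $(\frac{2c}\delta+\ep)$-equivalence to the $\ell_1$-basis. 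The bookkeeping of the $\ep$'s and the passage to subsequences realizing the relevant infima/suprema is the routine but delicate part.

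The complex case is where the real--complex discrepancy recorded in Remark~(4) enters, and this is the main obstacle. The point is that $\ell_1$-equivalence in $(vi_c)$ uses \emph{complex} coefficients and hence corresponds to $\dcj{\cdot}$, not $\dcjr{\cdot}$, whereas the clean estimate $\wde{\cdot}\le2\dcjr{\cdot}$ of Theorem~\ref{T:rosenthal}$(d)$ is about the real quantity. The forward direction $(vi_c)\implies(iv_c)$ still works because $\cj{\cdot}\le\cjr{\cdot}$ and the complex $\ell_1$-estimate only gives me control of $\dcj{\cdot}$, but combined with the complex inequality $\wde{\cdot}\le\pi\dcj{\cdot}$ of part $(e)$ I can still bound $\wde{\cdot}$ below (up to the factor hidden in the constants) — I must be careful that here I actually want to use the direction giving $\dcj{\cdot}$ large, so I would pair $(vi_c)$ with $(b)$ and $(c)$ rather than $(e)$, exactly mirroring the real argument but with $\dcj{\cdot}$. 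For the reverse $(iv_c)\implies(vi_{\pi c/2})$, the loss of $\frac\pi2$ is forced: from $(iv_c)$ I get a lower bound on $\wde{\cdot}$, but to convert this into complex $\ell_1$-equivalence I must pass through $\dcj{\cdot}$, and the only available estimate in that direction is $\wde{\cdot}\le\pi\dcj{\cdot}$ from part $(e)$, which is a factor $\frac\pi2$ weaker than the real $\wde{\cdot}\le2\dcjr{\cdot}$. Tracking this factor through the $\frac{2c}\delta$ normalization yields precisely the constant $\frac{\pi c}2$, and the optimality of $\frac1\pi$ in Lemma~\ref{L:complex} (equivalently the sharpness of Theorem~\ref{T:rosenthal}$(e)$, witnessed by Example~\ref{ex:cantor}) is what makes this loss genuine rather than an artifact of the proof.
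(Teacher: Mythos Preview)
Your overall strategy is the right one and matches the paper's: Theorem~\ref{T:rosenthal} is exactly the bridge between $(iv_c)$ and $(vi_c)$, and your complex-case discussion is essentially correct. However, in the real case you have the two halves of Theorem~\ref{T:rosenthal} swapped, and as written neither implication goes through.

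For $(vi_c)\implies(iv_c)$: once you have extracted a subsequence with a good lower $\ell_1$-estimate, i.e.\ $\dcj{\cdot}$ (equivalently $\dcjr{\cdot}$) large, you invoke part $(d)$, namely $\wde{\cdot}\le2\dcjr{\cdot}$. But this is an \emph{upper} bound on $\wde{\cdot}$; knowing the right-hand side is large tells you nothing about the left-hand side. The correct tool is part $(c)$, $\de{x_n}\ge2\dcj{x_n}$, which converts the lower bound on $\dcj{\cdot}$ into a lower bound on $\de{\cdot}$ and hence yields $(iii_c)$ (equivalent to $(iv_c)$). This is precisely what you, correctly, propose in the complex paragraph when you say you should ``pair $(vi_c)$ with $(b)$ and $(c)$ rather than $(e)$''.

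For $(iv_c)\implies(vi_c)$: here $(iv_c)$ gives $\wde{\cdot}\ge\frac\delta c$, and you want a lower bound on $\dcjr{\cdot}$ in order to produce an $\ell_1$-equivalent subsequence. Parts $(b)$ and $(c)$ say $\wde{\cdot}\ge2\cjr{\cdot}$ and $\de{\cdot}\ge2\dcjr{\cdot}$, which again go the wrong way. What is needed is part $(d)$, $\wde{\cdot}\le2\dcjr{\cdot}$, giving $\dcjr{\cdot}\ge\tfrac12\wde{\cdot}\ge\tfrac\delta{2c}$ --- exactly parallel to your (correct) use of $(e)$ in the complex direction $(iv_c)\implies(vi_{\pi c/2})$.

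So the fix is simply to interchange the roles of $(d)$ and of $(b)$/$(c)$ in your real argument; after that swap your plan coincides with the paper's proof. One further minor point: in $(vi_c)\implies(iv_c)$ the normalization step is not a translation of differences but rather passing to a subsequence along which $\norm{x_{n_k}}\to\alpha>0$ and then dividing by the norms; the resulting normalized sequence is only $(\tfrac d\alpha-2\ep)$-separated, and tracking the factor $\alpha$ through the computation is what makes the constant come out exactly as $c$.
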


\begin{proof}  The proof is done by a minor but careful modification of the proof of \cite[Proposition 4.1]{qschur}:

$(iv_c)\implies(vi_c)$ if $X$ is a real space: Fix $\delta\in(0,2]$ and $\varepsilon>0$. Let $(x_n)$ be a normalized $\delta$-separated sequence in $X$. Then clearly $\wca{x_n}\ge\delta$. Property $(iv_c)$ yields $\wde{x_n}\ge\frac\delta c$. We use Theorem~\ref{T:rosenthal}$(d)$ to deduce $\dcj{x_k}\ge \frac{\delta}{2c}$. Now it easily follows that there is a subsequence $(\frac{2c}\delta+\varepsilon)$-equivalent to the canonical basis of $\ell_1$.

$(iv_c)\implies (vi_{\pi c/2})$ if $X$ is a complex space: We proceed similarly, we only use
 Theorem~\ref{T:rosenthal}$(e)$ to deduce $\dcj{x_k}\ge \frac{\delta}{\pi c}$. Now it easily follows that there is a subsequence $(\frac{\pi c}\delta+\varepsilon)$-equivalent to the canonical basis of $\ell_1$.

$(vi_c)\implies (iv_c)$: Let $(x_n)$ be a bounded sequence in $X$. If $\wca{x_n}=0$, the inequality from $(iv_c)$ is trivial. So, assume that $\wca{x_n}>0$ and fix an arbitrary $d\in (0,\wca{x_n})$. Further, let $\varepsilon>0$ be arbitrary. Then $\wca{x_n}>d$, so, an easy application of the classical Ramsey theorem shows that there is a $d$-separated subsequence $(x_{n_k})$.
Up to passing to a further subsequence, we may assume that $\norm{x_{n_k}}\to \alpha \ge0$. Since $(x_{n_k})$ is $d$-separated, necessarily $\alpha\ge \frac d2>0$. Up to omitting finitely many elements we may assume that $\abs{\alpha-\norm{x_{n_k}}}<\varepsilon$  for each $k\in\en$. Then the sequence $\left(\frac{x_{n_k}}{\norm{x_{n_k}}}\right)$ is normalized and $(\frac d\alpha-2\varepsilon)$-separated (see the computation in the proof of \cite[Proposition 4.1]{qschur}). By $(vi_c)$ we may assume, up to passing to a subsequence that
$$\left(\frac{x_{n_k}}{\norm{x_{n_k}}}\right) \mbox{ is }\left(\frac {2c}{\frac d\alpha-2\varepsilon}+\varepsilon\right)\mbox{-equivalent to the standard basis of }\ell_1.$$ 
Then
$$\cj{\frac{x_{n_k}}{\norm{x_{n_k}}}}\ge\frac{1}{\frac {2c}{\frac d\alpha-2\varepsilon}+\varepsilon},$$
hence
$$\dcj{x_n}\ge \cj{x_{n_k}}\ge\frac{\alpha}{\frac {2c}{\frac d\alpha-2\varepsilon}+\varepsilon}.$$
Since $\varepsilon>0$ is arbitrary, we deduce that
$$\dcj{x_n}\ge \frac{\alpha}{\frac {2c}{\frac d\alpha}}=\frac d{2c}.$$
By Theorem~\ref{T:rosenthal}$(c)$ we deduce
$$\de{x_n}\ge \frac dc.$$
Since $d$ was arbitrary, we get $\wca{x_n}\le c\de{x_n}$; i.e., condition $(iii_c)$ of Proposition~\ref{P:schur} is valid. We conclude by recalling that $(iii_c)\iff (iv_c)$ by Proposition~\ref{P:schur}.
\end{proof}

\begin{remark}
 The previous proposition together with Proposition~\ref{P:schur} provides an improvement of \cite[Proposition 4.1]{qschur}. Indeed, if $X$ has the $1$-strong Schur property, it satisfies property $(vi_1)$ and hence property $(iv_1)$. Proposition~\ref{P:schur} then yields validity of $(i_3)$, i.e., the $3$-Schur property.
\end{remark}

Next we look at complex spaces. We illustrate by an example that the properties of complex spaces are indeed different. The first ingredient is the following lemma on complexification of Schur spaces.
    
\begin{lemma}\label{L:komplexifikace}
    Let $X$ be a real Banach space and let $X_C$ denote its complexification. Assume that $c>0$.
    \begin{enumerate}[$(a)$]
        \item $X$ has the $c$-Schur property if and only if $X_C$ has the $c$-Schur property.
        \item $X$ satisfies condition $(iii_c)$ from Proposition~\ref{P:schur} if only if this condition is satisfied by $X_C$.
    \end{enumerate}
\end{lemma}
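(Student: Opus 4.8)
Throughout I take $X_C$ to be the natural complexification, with norm $\norm{x+iy}_{X_C}=\sup_{\theta\in[0,2\pi]}\norm{\Re(e^{i\theta}(x+iy))}_X$, and I reduce everything to the real version $(X_C)_R$: since $\ca{\cdot}$, $\wca{\cdot}$ and $\de{\cdot}$ coincide in a complex space and its real variant (Section~\ref{sec:prel}, \cite[Section 5]{wesecom}), both the $c$-Schur property and condition $(iii_c)$ hold in $X_C$ exactly when they hold in $(X_C)_R$. So I only compare the real spaces $X$ and $(X_C)_R$. The two tools are the isometric embedding $j\colon X\to(X_C)_R$, $jx=x+i0$, and the real-linear maps $P_\theta\colon(X_C)_R\to X$, $P_\theta z=\Re(e^{i\theta}z)$, which satisfy $\norm{P_\theta}\le1$ and $\norm{z}_{X_C}=\sup_\theta\norm{P_\theta z}_X$. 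The ``only if'' directions are easy: for $(x_n)$ in $X$, isometry of $j$ gives $\ca{x_n}=\ca{jx_n}$ and $\wca{x_n}=\wca{jx_n}$, while restriction of functionals to $j(X)$ gives $\de{jx_n}\le\de{x_n}$; applying the assumed inequality of $(X_C)_R$ to $(jx_n)$ then returns the inequality for $(x_n)$, settling both $(a)$ and $(b)$ in this direction.

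The content is the ``if'' direction, and both parts rest on one estimate: for every bounded $(z_n)$ in $(X_C)_R$ and every $\ep>0$ there is a finite net $\theta_1,\dots,\theta_p$, of mesh depending only on $\ep$ and a uniform bound $M$ for $(z_n)$, with $\ca{z_n}\le\max_j\ca{P_{\theta_j}z_n}+\ep$. To get it I set $w_n^\theta=P_\theta z_n$, so that $\ca{z_n}=\inf_m\sup_\theta\diam\{w_k^\theta:k\ge m\}$. Boundedness of $(z_n)$ makes $\theta\mapsto\norm{w_k^\theta-w_l^\theta}$ Lipschitz uniformly in $k,l$, since $w_k^\theta-w_k^\phi=\Re((e^{i\theta}-e^{i\phi})z_k)$ has norm $\le|\theta-\phi|\,\norm{z_k}_{X_C}$; hence $\theta\mapsto\diam\{w_k^\theta:k\ge m\}$ is Lipschitz with a constant independent of $m$. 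For a fine enough net this yields $\sup_\theta\diam\{w_k^\theta:k\ge m\}\le\max_j\diam\{w_k^{\theta_j}:k\ge m\}+\ep$, and taking $\inf_m$ while using that each $m\mapsto\diam\{w_k^{\theta_j}:k\ge m\}$ is nonincreasing (so $\inf_m\max_j=\max_j\inf_m$ over the finite net) gives the claim. For part $(a)$ I then apply the $c$-Schur property of $X$ to each $(w_n^{\theta_j})$ together with $\de{P_\theta z_n}\le\de{z_n}$ (as $\norm{P_\theta}\le1$), obtaining $\ca{z_n}\le c\,\de{z_n}+\ep$; letting $\ep\to0$ gives the $c$-Schur property of $(X_C)_R$.

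For part $(b)$ I prove condition $(iv_c)$ of Proposition~\ref{P:schur} for $(X_C)_R$, which by that proposition is equivalent to $(iii_c)$. First pass to a subsequence indexed by an infinite set $N_0$ with $\de{(z_n)_{n\in N_0}}\le\wde{z_n}+\ep=:D$, whence $\de{(w_n^{\theta_l})_{n\in N_0}}\le D$ for every $l$. Using the net $\theta_1,\dots,\theta_p$ fixed above, I extract nested infinite sets $N_0\supseteq N_1\supseteq\dots\supseteq N_p$: having $N_{l-1}$, apply $(iii_c)$ in $X$ to $(w_n^{\theta_l})_{n\in N_{l-1}}$ to get $\wca{(w_n^{\theta_l})_{n\in N_{l-1}}}\le c\,\de{(w_n^{\theta_l})_{n\in N_{l-1}}}\le cD$ (here $\de$ does not increase under subsequencing), and then choose $N_l\subseteq N_{l-1}$ with $\ca{(w_n^{\theta_l})_{n\in N_l}}\le cD+\ep$. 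Because $\ca{\cdot}$ only decreases under further subsequencing, along the final $N_p$ all $p$ estimates hold at once, so the net estimate of the previous paragraph applied along $N_p$ gives $\wca{z_n}\le\ca{(z_n)_{n\in N_p}}\le cD+2\ep$; letting $\ep\to0$ yields $\wca{z_n}\le c\,\wde{z_n}$, i.e. $(iv_c)$ for $(X_C)_R$, and hence $(iii_c)$.

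I expect the real difficulty to be the interchange between a single angle and the supremum over all angles. The trivial inequality runs the wrong way, $\sup_\theta\ca{P_\theta z_n}\le\ca{z_n}$, and reversing it forces the uniform equicontinuity in $\theta$ together with the finite-net interchange $\inf_m\max_j=\max_j\inf_m$. In part $(b)$ this is compounded by the need for a single subsequence serving all angles at once; this succeeds only because $\ca{\cdot}$ is monotone under subsequencing while $(iii_c)$ may be reapplied verbatim to each successive subsequence, the bound $cD$ being preserved by the monotonicity of $\de{\cdot}$ under subsequencing.
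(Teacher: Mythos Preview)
Your argument is correct, though you have swapped the labels ``if'' and ``only if'' (the easy direction via the subspace embedding is the \emph{if} part of the biconditional, not the \emph{only if}). This is cosmetic.

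The substantive direction is handled differently from the paper. For $(a)$, the paper passes through the equivalent condition $(i_c)$ of Proposition~\ref{P:schur}: given $(z_n)=(x_n+iy_n)$ with $\limsup\norm{z_n}>d$, it extracts by compactness a \emph{single} angle $(\alpha,\beta)$ on the real unit circle so that $\limsup\norm{\alpha x_n+\beta y_n}\ge d+\ep$, applies $(i_c)$ in $X$ to that one real sequence, and then lifts the resulting functional to $(X_C)^*$. For $(b)$, the paper again reduces to a single angle, this time via the classical Ramsey theorem applied to a finite colouring of pairs by cells of an $\ep$-net on the circle. Your approach avoids both the detour through $(i_c)$ and the Ramsey step: you keep a whole finite net of angles, exploit the uniform Lipschitz dependence of $P_\theta z_n$ on $\theta$ to control the error, and use the elementary interchange $\inf_m\max_j=\max_j\inf_m$ for finitely many nonincreasing sequences. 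In $(b)$ the single angle is replaced by a finite chain of nested subsequences, one per net point, relying on the monotonicity of $\ca{\cdot}$ and $\de{\cdot}$ under subsequencing. Both routes work; yours is slightly more self-contained (no Ramsey, no appeal to $(i_c)\iff(ii_c)$), while the paper's single-angle reduction is perhaps conceptually cleaner and yields an explicit witnessing functional in part $(a)$.
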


\begin{proof}
  Let us recall that the complexification of $X$ (see, e.g. \cite[p. 54]{fhhmpz}) is the complex vector space
  $$X_C=X+iX=\{x+iy\setsep x,y\in X\}$$
  equipped with the norm
  $$\norm{x+iy}=\sup\{\norm{\alpha x+\beta y}\setsep \alpha,\beta\in\er,\alpha^2+\beta^2=1\}.$$
  Let us first prove the `if' parts of both assertions. Denote $(X_C)_R$ the real version of the complex
  space $X_C$. If $X_C$ has one of the properties, then $(X_C)_R$ has it clearly as well. Further, Since $X$
 is isometric to a subspace of $(X_C)_R$ and both properties are inherited by subspaces, the `if' parts follow.  
  We continue by proving the `only if' parts.
  
  $(a)$: Assume that $X$ has the $c$-Schur property, i.e., it satisfies condition $(i_c)$ from Proposition~\ref{P:schur} and let us prove that $X_C$ satisfies it as well.
  Let $(z_n)=(x_n+iy_n)$ be a bounded sequence in $X_C$ such that $\limsup\norm{z_n}>d>0$. Fix $\ep>0$ such that $\limsup\norm{z_n}>d+\ep$. Up to passing to a subsequence we may assume that $\norm{z_n}>d+\ep$ for each $n\in\en$.
  By definition of the norm on $X_C$ there are $(\alpha_n,\beta_n)\in\er^2$ with $\alpha_n^2+\beta_n^2=1$ such that
  $$\norm{\alpha_n x_n+\beta_n y_n}>d+\ep\mbox{ for each }n\in\en.$$
  Up to passing to a subsequence we may assume that $(\alpha_n,\beta_n)\to(\alpha,\beta)\in\er^2$. Then clearly $\alpha^2+\beta^2=1$ and $\limsup \norm{\alpha x_n+\beta y_n}\ge d+\ep$. We apply validity of $(i_c)$ for $X$ to find $x^{**}\in \clu{\alpha x_n+\beta x_n}$ with $\norm{x^{**}}>\frac{d}{c}$. Then there is $x^{*}\in X^*$ of norm one such that $\ip{x^{**}}{x^*}>\frac dc$. Thus
  $$\ip{x^*}{\alpha x_n+\beta x_n}>\tfrac{d}{c}\mbox{ for infinitely many }n\in\en.$$
  Let us define $\widetilde{x^*}\in (X_C)^*$ by setting
  $$\widetilde{x^*}(x+iy)=x^*(x)+i x^*(y),\quad x+iy\in X_C.$$
  It is clearly a linear functional on $X_C$. Moreover, $\norm{\widetilde{x^*}}\le 1$. Indeed, assume that $x+iy\in X_C$. Then
  $$\begin{aligned}
   \abs{\widetilde{x^*}(x+iy)}&=\abs{x^*(x)+i x^*(y)} = \sup\{ \abs{\gamma x^*(x)+\delta x^*(y)}\setsep \gamma,\delta\in\er,\gamma^2+\delta^2=1\}\\
   &
   =\sup\{ \abs{x^*(\gamma x+\delta y)}\setsep \gamma,\delta\in\er,\gamma^2+\delta^2=1\}\\&\le 
   \sup\{ \norm{\gamma x+\delta y}\setsep \gamma,\delta\in\er,\gamma^2+\delta^2=1\}=\norm{x+iy}.
  \end{aligned}$$
Moreover, for infinitely many $n\in\en$ we have
$$\abs{\widetilde{x^*}(x_n+iy_n)}=\abs{x^*(x_n)+i x^*(y_n)}\ge \alpha x^*(x_n)+\beta x^*(y_n)=x^*(\alpha x_n+\beta y_n)>\tfrac dc.$$
Now it is clear there there is a weak$^*$-cluster point of $(x_n+i y_n)$ in $(X_C)^{**}$ of norm at least $\frac dc$. This completes the proof.  

$(b)$: Assume that $X$ satisfies condition $(iii_c)$ from Proposition~\ref{P:schur} and let us prove that $X_C$ satisfies it as well.  Let $(z_n)=(x_n+iy_n)$ be a bounded sequence in $X_C$ such that $\wca{z_n}>d>0$. Let $M>0$ be such that $\norm{x_n}\le M$ and $\norm{y_n}\le M$ for each $\ep>0$. Fix $\ep>0$ such that $\wca{z_n}>d+2\ep$. Due to the classical Ramsey theorem, we may assume, up to passing to a subsequence that $\norm{z_n-z_m}>d+2\ep$ whenever $n\ne m$. Hence, for each pair $n\ne m$ there is some vector $(\alpha_{n,m},\beta_{n,m})\in\er^2$ with $\alpha_{n,m}^2+\beta_{n,m}^2=1$ such that
$\norm{\alpha_{n,m} (x_n-x_m)+\beta_{n,m}(y_n-y_m)}>d+2\ep$. 

Fix $\{(\gamma_j,\delta_j)\setsep j=1,\dots,N\}$ a (finite) subset of the unit circle in $\er^2$ which is an $\frac{\ep}{2M}$-net in the $\ell^1$-norm. By the classical Ramsey theorem we may assume, up to passing to a subsequence that there is some $j\in\{1,\dots,N\}$ such that
$$\norm{(\alpha_{n,m},\beta_{n,m})-(\gamma_j,\delta_j)}_1<\tfrac{\ep}{2M} \mbox{ whenever }n\ne m.$$
Set $\gamma=\gamma_j$ and $\delta=\delta_j$. Then for each $n\ne m$ we have
$$\begin{aligned}
 \norm{\gamma(x_n-x_m)+\delta(y_n-y_m)}&\ge  \norm{\alpha_{n,m} (x_n-x_m)+\beta_{n,m}(y_n-y_m)}\\&\qquad-\norm{(\gamma-\alpha_{n,m}) (x_n-x_m)+(\delta-\beta_{n,m})(y_n-y_m)}\\&>
 d+2\ep-\tfrac{\ep}{2M}\cdot 2M=d+\ep.
\end{aligned}$$
It follows that $\wca{\gamma x_n+\delta y_n}\ge d+\ep$. By condition $(iii_c)$ in $X$ we deduce that $\de{\gamma x_n+\delta y_n}\ge \frac{d+\ep}{c}$. Hence, there is a norm-one element $x^*\in X^*$ such that
$\ca{\ip{x^*}{\gamma x_n+\delta y_n}}>\frac dc$. Define $\widetilde{x^*}\in (X_C)^*$ as in the proof of $(a)$ and recall it has norm one. For $m,n\in\en$ we have
$$\begin{aligned}
\abs{\widetilde{x^*}(z_n-z_m)}&=\abs{x^*(x_n-x_m)+ix^*(y_n-y_m)}
\ge \abs{\gamma x^*(x_m-x_m)+\delta x^*(y_n-y_m)}
\\&=\abs{x^*(\gamma x_n+\delta y_n)-x^*(\gamma x_m+\delta y_m)}.
\end{aligned}$$
It follows that 
$$\de{z_n}\ge \ca{\widetilde{x^*}(z_n)}\ge \ca{x^*(\gamma x_n+\delta y_n)}>\tfrac dc.$$
This completes the proof.

\end{proof}

We continue by an example showing optimality of the inequalities in the complex case of Proposition~\ref{P:strongschur}.

\begin{example}\label{ex:ell_1-complexification}
     \begin{enumerate}[$(1)$]
       \item The complex Banach space $\ell_1$ has both the $1$-Schur property and the $1$-strong Schur property.
      \item Let $X$ denote the complexification of the real Banach space $\ell_1$. Then:
    \begin{enumerate}[$(a)$]
        \item $X$ has the $1$-Schur property.
        \item $X$ fails property $(vi_c)$ from Propostion~\ref{P:strongschur} for any $c<\frac\pi2$. In particular, $X$ fails the $1$-strong Schur property.
    \end{enumerate}
       \end{enumerate}
\end{example}

\begin{proof}
     $(1)$:  The space $\ell_1$ has the $1$-Schur property by \cite[Theorem 1.3]{qschur}. (The quoted theorem deals with the real case, but its proof works in the complex case as well, as explained in the final remarks of \cite{qschur}.)  A proof of the $1$-strong Schur property may be done by a modification of the proof of \cite[Theorem 1.3]{qschur}. Let us indicate it.

     Let $\delta\in(0,2]$, let $(x_n)$ be a normalized $\delta$-separated sequence in $\ell_1$ and let $\ep>0$. Without loss of generality assume $\ep<\frac\delta4$. Up to passing to a subsequence we may assume that $(x_n)$ pointwise converges to some $x\in\ell_1$. Set $y_n=x_n-x$. Then $(y_n)$ is $\delta$-separated and pointwise converges to $0$. Up to omitting one element, we may assume that $\norm{y_n}\ge\frac{\delta}{2}$ for each $n\in\en$. By an easy induction we find a subsequence $(y_{n_k})$
     and integers 
     $$0=N_0<N_1<N_2<\dots$$
     such that $\norm{y_{n_k}|_{(N_{k-1},N_k]}}>\norm{y_{n_k}}-\frac{\ep}{2^k}$ for each $k\in\en$.
     Assume that $m\in\en$ and $\alpha_1,\dots,\alpha_m\in\ce$. Then
     $$\begin{aligned}
        \norm{\sum_{j=1}^m\alpha_j y_{n_j}}&\ge \sum_{k=1}^m\norm{\sum_{j=1}^m\alpha_j y_{n_j}|_{(N_{k-1},N_k]}}
        \\&\ge \sum_{k=1}^m\left(\abs{\alpha_k}\norm{y_{n_k}|_{(N_{k-1},N_k]}}-\sum_{1\le j\le m, j\ne k}\abs{\alpha_j}\norm{ y_{n_j}|_{(N_{k-1},N_k]}}\right)
        \\&=  \sum_{k=1}^m\abs{\alpha_k}(\norm{y_{n_k}|_{(N_{k-1},N_k]}} - \norm{y_{n_k}|_{(0,N_m]\setminus (N_{k-1},N_k]}})\\&\ge(\tfrac\delta2-2\ep)\sum_{j=1}^m \abs{\alpha_j}.
     \end{aligned}$$
    It means that $(y_{n_k})$ satisfies lower $\ell_1$-estimate with constant $\frac\delta2-2\ep$. By \cite[Lemma 4.2]{knaust-odell} we deduce that, after omitting finitely many elements, the sequence $(x_{n_k})=(y_{n_k}+x)$ satisfies the same lower $\ell_1$-estimate. (Note that the quoted lemma is formulated in the real setting, but the same proof works in the complex case as well). But this means that $(x_{n_k})$ is
    $$\frac{1}{\tfrac\delta2-2\ep}=\frac{2}{\delta-4\ep}\mbox{-equivalent to the $\ell_1$-basis.}$$ This completes the argument.

    $(2)$: The real space $\ell_1$ has the $1$-Schur property by \cite[Theorem 1.3]{qschur}. Hence,  assertion $(a)$ follows from Lemma~\ref{L:komplexifikace}. To prove assertion $(b)$ we first observe that $X$ is canonically isometrically embedded into $C(\{-1,1\}^\en,\ce)$. For $x,y\in \ell_1$ we have
    $$\begin{aligned}
        \norm{x+iy}&=\sup\{\norm{\alpha x+\beta y}\setsep\alpha,\beta\in\er,\alpha^2+\beta^2=1\}
        \\&= \sup\{\sup\{\abs{\ip{x^*}{\alpha x+\beta y}}\setsep x^*\in B_{\ell_\infty}\}\setsep\alpha,\beta\in\er,\alpha^2+\beta^2=1\}
        \\&= \sup\{\sup\{\abs{\ip{x^*}{\alpha x+\beta y}}\setsep x^*\in\ext B_{\ell_\infty}\}\setsep\alpha,\beta\in\er,\alpha^2+\beta^2=1\}
         \\&= \sup\{\sup\{\abs{\alpha x^*(x)+\beta x^*(y)}\setsep\alpha,\beta\in\er,\alpha^2+\beta^2=1 \}\setsep x^*\in\ext B_{\ell_\infty}\}
         \\&=\sup\{\abs{x^*(x)+ix^*(y)}\setsep x^*\in\ext B_{\ell_\infty}\}.
    \end{aligned}$$
   Observe that $\ext B_{\ell_\infty}=\{-1,1\}^\en$ and by the above computation the assigment
   $$ z=x+iy\in X\mapsto f_z, \mbox{ where }f_z(x^*)=x^*(x)+ix^*(y), x^*\in \ext B_{\ell_\infty}=\{-1,1\}^\en,$$
   is a linear isometric inclusion of $X$ into $C(\{-1,1\}^\en,\ce)$. 

    Take the sequence $(e_k)$ of canonical vectors in $X$. This is a normalized $2$-separated sequence. However,
    $f_{e_k}$ is the projection of $\{-1,1\}^\en$ onto $k$-th coordinate. So, by Example~\ref{ex:cantor} we deduce that $\dcj{e_k}=\frac2\pi$. In particular, no subsequence of $(e_k)$ is $c$-equivalent to the $\ell_1$-basis for $c<\frac\pi2$. This completes the proof.
  \end{proof}

\section{A sufficient condition for the $1$-Schur property}\label{sec:m1}

There are several examples of spaces with the $1$-Schur property -- $\ell_1(\Gamma)$ \cite[Theorem 1.3]{qschur},
$X^*$ for any $X\subset c_0(\Gamma)$ \cite[Theorem 1.1]{qschur-dp}, Lipschitz-free spaces over proper purely $1$-unrectifiable metric spaces  (by \cite[Proposition 17]{petitjean-schur} and \cite[Theorem 3.2]{p1u}). A key role in 
the result of \cite{qschur-dp} was played by property $(m_1^*)$ (see \cite[Lemma 1.2]{qschur-dp}). In the present section we provide a generalization of the mentioned result of \cite{qschur-dp} by showing that a variant of property $(m_1^*)$ is a sufficient condition for the $1$-Schur property. We first recall definitions of the relevant properties (cf. \cite[Definition 2.1]{kalton-werner}). 

A Banach space $X$ is said to have
\begin{itemize}
    \item \emph{property $(m_1)$} if $\limsup\norm{x_n+x}=\norm{x}+\limsup\norm{x_n}$  whenever $x\in X$ and $(x_n)$ is a weakly null sequence in $X$;
    \item \emph{property $(m_1^*)$} if $\limsup\norm{x_n^*+x^*}=\norm{x^*}+\limsup\norm{x_n^*}$  whenever $x^*\in X^*$ and $(x_n^*)$ is a weak$^*$ null sequence in $X^*$.
\end{itemize}

We continue by an auxiliary lemma on constructing of an $\ell_1$-sequence from a variant of these properties.

\begin{lemma}\label{L:m1-ell1}
    Let $X$ be a Banach space. Let $(x_n)$ be a bounded sequence in $X$, $x\in X\setminus\{0\}$ and $c>0$. Assume that
    $$\forall y\in \span(\{x\}\cup\{x_n\setsep n\in\en\})\colon \lim_n\norm{x_n+y}=c+\norm{y}.$$
    Then for each $\ep>0$ there is a subsequence $(x_{n_k})$ such that the sequence
    $$\tfrac{x}{\norm{x}}, \tfrac{x_{n_1}}{c}, \tfrac{x_{n_2}}{c}, \tfrac{x_{n_3}}{c},\dots$$
    is $\ep$-isometrically equivalent to the canonical basis of $\ell^1$.
\end{lemma}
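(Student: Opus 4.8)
The plan is to produce, for a given $\ep>0$, a subsequence $(x_{n_k})$ along which the two candidate vectors satisfy two-sided $\ell_1$-estimates with constants as close to $1$ as we like. Write $u=\frac{x}{\norm{x}}$ and $v_k=\frac{x_{n_k}}{c}$. Taking $y=0$ in the hypothesis gives $\lim_n\norm{x_n}=c$, so each $\norm{v_k}\to1$; the upper $\ell_1$-estimate $\norm{a_0u+\sum_{k=1}^m a_kv_k}\le(1+\ep)(\abs{a_0}+\sum_{k=1}^m\abs{a_k})$ is then immediate from the triangle inequality once we arrange $\norm{x_{n_k}}\le c(1+\ep)$, which costs only discarding finitely many terms. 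Hence the whole difficulty lies in the lower estimate.

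The lower estimate will come from peeling off the last vector, turned into a multiplicative recursion. Rewriting the combination as $T_m=b_0x+\sum_{k=1}^m b_kx_{n_k}$ with $b_0=\frac{a_0}{\norm{x}}$ and $b_k=\frac{a_k}{c}$ (so that $\abs{b_0}\norm{x}=\abs{a_0}$ and $c\abs{b_k}=\abs{a_k}$), I aim to establish $\norm{T_j}\ge\theta_j\bigl(\norm{T_{j-1}}+c\abs{b_j}\bigr)$ for every $j$, with factors $\theta_j=1-\frac{\ep_j}{c}\in(0,1)$ chosen so that $\prod_j\theta_j\ge1-\ep$. Telescoping this recursion and using $\theta_j\le1$ gives $\norm{T_m}\ge(\prod_{j=1}^m\theta_j)(\abs{b_0}\norm{x}+c\sum_{k=1}^m\abs{b_k})\ge(1-\ep)(\abs{a_0}+\sum_{k=1}^m\abs{a_k})$, which is exactly the desired lower estimate.

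To obtain the single-step inequality I would build the subsequence inductively. Having chosen $n_1<\dots<n_{j-1}$, set $V_{j-1}=\span(\{x\}\cup\{x_{n_i}\setsep i<j\})$, a finite-dimensional space. The functions $y\mapsto\norm{x_n+y}$ are $1$-Lipschitz and converge pointwise on $V_{j-1}$ to $y\mapsto c+\norm{y}$; since pointwise convergence of an equi-Lipschitz family on a compact set is uniform, I can pick $n_j>n_{j-1}$ with $\norm{x_{n_j}+y}>c+\norm{y}-\ep_j$ for all $y\in V_{j-1}$ satisfying $\norm{y}\le R_j$, and simultaneously with $\abs{\norm{x_{n_j}}-c}<\ep_j$ (for the upper estimate). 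For the step, write $\norm{T_j}=\norm{b_jx_{n_j}+T_{j-1}}$. When $b_j\ne0$ and $\norm{T_{j-1}/b_j}\le R_j$, factoring out $\abs{b_j}$ and applying the estimate at $y=T_{j-1}/b_j$ gives $\norm{T_j}>c\abs{b_j}+\norm{T_{j-1}}-\abs{b_j}\ep_j$, which already dominates $\theta_j(\norm{T_{j-1}}+c\abs{b_j})$ for $\theta_j=1-\frac{\ep_j}{c}$.

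The hard part, and the reason a second case is unavoidable, is that the coefficient $b_j$ may be small, so that $y=T_{j-1}/b_j$ escapes the ball of radius $R_j$ on which $x_{n_j}$ is controlled. In that regime I would fall back on the crude bound $\norm{b_jx_{n_j}+T_{j-1}}\ge\norm{T_{j-1}}-\abs{b_j}M$, where $M=\sup_n\norm{x_n}$; since here $\norm{T_{j-1}}>\abs{b_j}R_j$, the choice $R_j\ge\frac{c(M+c)}{\ep_j}$ makes this crude bound exceed $\theta_j(\norm{T_{j-1}}+c\abs{b_j})$ as well. Thus the single-step inequality holds in both cases and the scheme closes. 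The remaining verifications are routine: the uniform-convergence lemma, the arithmetic fixing the summable $\ep_j$ and the radii $R_j$ so that $\prod_j\theta_j\ge1-\ep$ and $\ep_j<c$, and the restatement of the resulting two-sided bound $(1-\ep)\sum\abs{a_i}\le\norm{\sum a_iw_i}\le(1+\ep)\sum\abs{a_i}$ as $\ep$-isometric equivalence to the canonical basis of $\ell_1$.
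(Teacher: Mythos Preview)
Your argument is correct. Both your proof and the paper's build the subsequence by induction, using at each step that the relevant space of ``previous'' coefficients is finite-dimensional; the technical executions differ. The paper normalizes to $c=\norm{x}=1$, fixes at step $k$ a finite $\tfrac{\ep}{2^{k+3}}$-net of the unit sphere of $\ell_1^{k+2}$, applies the hypothesis to the finitely many net points to select $n_{k+1}$, and then approximates arbitrary coefficient vectors by net elements; because the coefficients lie on the $\ell_1$-unit sphere, no ``escape from the ball'' case arises and the argument proceeds in a single stroke. Your scheme instead invokes the abstract fact that an equi-Lipschitz pointwise-convergent family converges uniformly on compacta, applied to the ball of radius $R_j$ in $V_{j-1}$; this packages the net argument nicely but forces the dichotomy between $\norm{T_{j-1}/b_j}\le R_j$ (where the hypothesis bites) and $\norm{T_{j-1}/b_j}>R_j$ (where the crude triangle inequality suffices), together with the calibration $R_j\ge c(M+c)/\ep_j$. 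The paper's route is a bit shorter and avoids that case split; yours makes the recursive structure $\norm{T_j}\ge\theta_j(\norm{T_{j-1}}+c\abs{b_j})$ explicit, which some readers may find more transparent.
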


\begin{proof}
    It is clear that without loss of generality we may assume that $c=1$, $\norm{x}=1$ and $\ep\in(0,1)$. Then $\norm{x_n}\to1$ (we may apply the assumption to $y=0$). Hence, up to omitting finitely many elements we may assume that $\norm{x_n}<1+\ep$ for each $n\in\en$. So, it is enough to find a subsequence for which the lower $\ell_1$-estimate holds with factor $1-\ep$.

   We will construct by induction an increasing sequence $(n_k)$ of positive integers such that
   $$\norm{\alpha x+\sum_{j=1}^k\alpha_j x_{n_j}}>1-\ep+\tfrac{\ep}{2^k} \mbox{ whenever }\alpha,\alpha_1,\dots,\alpha_k\in\ef, \abs{\alpha}+\sum_{j=1}^k\abs{\alpha_j}=1.$$
   It is clear that the proof will be complete as soon as the construction is performed.

   We start by finding $n_1$. To this end fix $\{(\alpha^l,\alpha_1^l)\setsep l=1,\dots,N\}$ an
   $\frac{\ep}{8}$-net of the unit sphere of $\ell_1^2$. By the assumption we have, for each $l\in\{1,\dots,N\}$,
   $$\lim_n\norm{\alpha^l x+\alpha_1^l x_n}=\abs{\alpha^l}+\abs{\alpha_1^l}=1,$$
   so we may find $n_1\in\en$ such that
   $$\forall n\ge n_1\;\forall l\in\{1,\dots,N\}\colon \norm{\alpha^l x+\alpha_1^l x_n}>1-\tfrac{\ep}{4}.$$
   Let now $(\alpha,\alpha_1)$ be any element of the unit sphere of $\ell_1^2$. Let $l\in\{1,\dots,N\}$
   be such that $\norm{(\alpha,\alpha_1)-(\alpha^l,\alpha_1^l)}<\frac{\ep}{8}$. Then
   $$\begin{aligned}
   \norm{\alpha x+\alpha_1 x_{n_1}}&\ge \norm{\alpha^l x+\alpha_1^l x_{n_1}}-\norm{(\alpha-\alpha^l) x+(\alpha_1 -\alpha_1^l)x_{n_1}} \\&>1-\tfrac{\ep}{4}-(1+\ep)\tfrac{\ep}{8}>1-\tfrac{\ep}{2}.\end{aligned}$$
  This completes the proof of the first step.

   The induction step is analogous: Assume that $n_1,\dots,n_k$ are constructed such that the condition is fulfilled.   Fix $\{(\alpha^l,\alpha_1^l,\dots,\alpha_{k+1}^l)\setsep l=1,\dots,N\}$ an
   $\frac{\ep}{2^{k+3}}$-net of the unit sphere of $\ell_1^{k+2}$. By the assumption together with the induction hypothesis we have, for each $l\in\{1,\dots,N\}$,
   $$\begin{aligned}
       \lim_n\norm{\alpha^l x+\sum_{j=1}^k\alpha_j^l x_{n_j}+\alpha_{k+1}^l x_n}&=\norm{\alpha^l x+\sum_{j=1}^k\alpha_j^l x_{n_j}}+\abs{\alpha_{k+1}^l}\\
       &\ge (1-\ep+\tfrac{\ep}{2^k})\sum_{j=1}^k\abs{\alpha_j^l}+\abs{\alpha_{k+1}^l}\ge 1-\ep+\tfrac{\ep}{2^k}
      \end{aligned}$$
  Hence, we may find $n_{k+1}>n_k$ such that     
   $$\forall n\ge n_{k+1}\;\forall l\in\{1,\dots,N\}\colon \norm{\alpha^l x+\sum_{j=1}^k\alpha_j^l x_{n_j}+\alpha_{k+1}^l x_n}>1-\ep+\tfrac{3\ep}{2^{k+2}}.$$

 Let now $(\alpha,\alpha_1,\dots,\alpha_{k+1})$ be any element of the unit sphere of $\ell_1^{k+2}$. Let $l\in\{1,\dots,N\}$
   be such that $\norm{(\alpha,\alpha_1,\dots,\alpha_{k+1})-(\alpha^l,\alpha_1^l,\dots,\alpha_{k+1}^l)}<\frac{\ep}{2^{k+3}}$. Then
   $$\begin{aligned}
   \norm{\alpha x+\sum_{j=1}^{k+1}\alpha_j x_{n_j}}&\ge \norm{\alpha^l x+\sum_{j=1}^{k+1}\alpha_j^l x_{n_j}}-\norm{(\alpha-\alpha^l) x+\sum_{j=1}^{k+1}(\alpha_j -\alpha_j^l)x_{n_j}} \\&>1-\ep+\tfrac{3\ep}{2^{k+2}}-(1+\ep)\tfrac{\ep}{2^{k+3}}>1-\tfrac{\ep}{2^{k+1}}.\end{aligned}$$
   This completes the construction and hence also the proof.\end{proof}

As a corollary we get the following characterization of the Schur property.

\begin{prop}
    Let $X$ be a Banach space. Then $X$ has the Schur property if and only if it satisfies property $(m_1)$.
\end{prop}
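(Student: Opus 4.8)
The plan is to treat the two implications separately, with essentially all the content lying in the ``only if'' direction.

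The forward implication, Schur $\implies (m_1)$, I expect to be immediate. If $X$ is a Schur space and $(x_n)$ is weakly null, then $(x_n)$ is norm-null, so $\limsup\norm{x_n}=0$ and $\lim_n\norm{x_n+x}=\norm{x}$ for every $x\in X$. The defining identity of $(m_1)$ then reads $\norm{x}=\norm{x}+0$, which holds trivially; so property $(m_1)$ is automatic.

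For the converse, $(m_1)\implies$ Schur, I would argue by contradiction using Lemma~\ref{L:m1-ell1}. Suppose $X$ is not Schur; then there is a weakly null sequence that is not norm-null, and after passing to a subsequence I may assume it is a weakly null sequence $(x_n)$ with $\norm{x_n}\to c$ for some $c>0$. The first step is to upgrade the $\limsup$ appearing in $(m_1)$ to a genuine limit valid for \emph{every} vector: I claim that $\lim_n\norm{x_n+y}=c+\norm{y}$ for all $y\in X$. Indeed, every subsequence of $(x_n)$ is again weakly null with norms tending to $c$, so $(m_1)$ applied to any subsequence $(x_{n_k})$ gives $\limsup_k\norm{x_{n_k}+y}=\norm{y}+c$; were the full limit to fail to equal $\norm{y}+c$, one could extract a subsequence whose limit of norms is strictly below $\norm{y}+c$, contradicting the value of the $\limsup$ along that subsequence.

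This claim verifies the hypothesis of Lemma~\ref{L:m1-ell1}: fixing any $x\in X$ with $\norm{x}=1$ (possible since $c>0$ forces $X\ne\{0\}$) and letting $y$ range over $\span(\{x\}\cup\{x_n\setsep n\in\en\})$, the required identity $\lim_n\norm{x_n+y}=c+\norm{y}$ holds simply because it holds for every $y\in X$. Applying the lemma with $\ep=\tfrac12$ produces a subsequence $(x_{n_k})$ for which the family $\tfrac{x}{\norm{x}},\tfrac{x_{n_1}}{c},\tfrac{x_{n_2}}{c},\dots$ is $\tfrac12$-isometrically equivalent to the $\ell^1$-basis; discarding the first term, the normalized tail $(\tfrac{x_{n_k}}{c})_k$ satisfies a lower $\ell_1$-estimate, that is, $\norm{\sum_k\beta_k x_{n_k}}\ge \tfrac c2\sum_k\abs{\beta_k}$ for all finitely supported scalar families $(\beta_k)$.

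The final step is to see that this contradicts weak nullness. Since $(x_{n_k})$ is weakly null, Mazur's theorem places $0$ (its weak limit) in the norm-closure of the convex hull of $\{x_{n_k}\setsep k\in\en\}$, so there are convex combinations $\sum_k\lambda_k x_{n_k}$ of arbitrarily small norm; but the lower $\ell_1$-estimate forces $\norm{\sum_k\lambda_k x_{n_k}}\ge\tfrac c2\sum_k\lambda_k=\tfrac c2>0$, a contradiction. Hence no such sequence $(x_n)$ exists and $X$ is Schur. I expect the only delicate point to be the $\limsup$-to-$\lim$ upgrade, together with the observation that the lemma's hypothesis need only be checked for $y$ in the span; once that is in place, Lemma~\ref{L:m1-ell1} and Mazur's theorem close the argument cleanly.
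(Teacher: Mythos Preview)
Your proof is correct and follows essentially the same route as the paper: the forward implication is trivial, and for the converse you take a weakly null non-norm-null sequence, feed it to Lemma~\ref{L:m1-ell1}, and obtain an $\ell_1$-subsequence, contradicting weak nullness. The paper is terser---it normalizes to the unit sphere, leaves the $\limsup$-to-$\lim$ upgrade implicit, and simply asserts that an $\ell_1$-sequence cannot be weakly null---whereas you spell out the upgrade via subsequences and close with Mazur's theorem, but the underlying argument is the same.
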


\begin{proof}
    Assume that $X$ has the Schur property. Let $(x_n)$ be any weakly null sequence. Then $\norm{x_n}\to0$ and $\norm{x_n+x}\to\norm{x}$ for each $x\in X$. Thus property $(m_1)$ follows.

    Conversely, assume that $X$ satisfies property $(m_1)$ but fails the Schur property. Then there is a weakly null sequence $(x_n)$ in the unit sphere of $X$. It follows that $(x_n)$ satisfies the assumption of Lemma~\ref{L:m1-ell1} with $c=1$. By the quoted lemma we deduce that $(x_n)$ has a subsequence equivalent to the canonical basis of $\ell_1$. So it cannot be weakly null, which is a contradiction.
\end{proof}

We continue by a lemma on weak$^*$ cluster points of $\ell_1$-sequences.

\begin{lemma}\label{L:hrombod}
    Let $X$ be a Banach space and assume that $(x_n)$ is a bounded sequence in $X$ which satisfies lower $\ell_1$-estimates with some $c>0$, i.e.,
    $$ \norm{\sum_{j=1}^n\alpha_jx_j}\ge c\sum_{j=1}^n\abs{\alpha_j}$$
   whenever $n\in\en$ and $\alpha_1,\dots,\alpha_n\in\ef$. Let $a,b\in\ef$. Then any weak$^*$-cluster point of sequence $(ax_1+bx_n)_n$ in $X^{**}$ has norm at least $c(\abs{a}+\abs{b})$.
\end{lemma}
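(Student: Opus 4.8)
The plan is to produce a \emph{single} norming functional that is constant along the tail of the sequence, so that the cluster point is forced to take a prescribed value. Write $y_n=ax_1+bx_n$, and recall that for any weak$^*$-cluster point $z^{**}$ of $(y_n)$ and any $x^*\in X^*$, the scalar $z^{**}(x^*)$ is necessarily a cluster value of the scalar sequence $(x^*(y_n))_n$. Since $\norm{z^{**}}=\sup\{\abs{z^{**}(x^*)}\setsep x^*\in B_{X^*}\}$, it suffices to exhibit one $x^*\in B_{X^*}$ for which $x^*(y_n)$ is eventually constant and equal to $c(\abs a+\abs b)$.

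First I would build the functional on $Y_0=\span\{x_n\setsep n\in\en\}$. The lower $\ell_1$-estimate forces the $x_n$ to be linearly independent, so one may freely prescribe values: set $x^*(x_1)=c\,\overline{\sign a}$ and $x^*(x_n)=c\,\overline{\sign b}$ for all $n\ge 2$ (in the degenerate cases $a=0$ or $b=0$ the corresponding value is irrelevant and may be taken to be $0$). The key step is verifying that this functional has norm at most $1$ on $Y_0$: for a finite combination $\gamma x_1+\sum_{n\ge2}\delta_n x_n$ its value has modulus at most $c(\abs\gamma+\abs{\sum_n\delta_n})\le c(\abs\gamma+\sum_n\abs{\delta_n})$, which by the assumed lower $\ell_1$-estimate is dominated by $\norm{\gamma x_1+\sum_{n\ge2}\delta_n x_n}$. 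A Hahn--Banach extension then delivers $x^*\in B_{X^*}$.

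With this $x^*$ in hand the conclusion is immediate. Using $a\,\overline{\sign a}=\abs a$ and $b\,\overline{\sign b}=\abs b$ (valid also when $a$ or $b$ vanishes, since the product is then $0$) one gets $x^*(y_n)=a\,x^*(x_1)+b\,x^*(x_n)=c(\abs a+\abs b)$ for every $n\ge 2$. Hence $(x^*(y_n))_n$ is eventually constant, its only cluster value is $c(\abs a+\abs b)$, and since $z^{**}(x^*)$ must be a cluster value of this sequence we obtain $z^{**}(x^*)=c(\abs a+\abs b)$, whence $\norm{z^{**}}\ge c(\abs a+\abs b)$.

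The hard part will \emph{not} be the computation but recognising that the obvious route fails: although each $y_n$ has norm at least $c(\abs a+\abs b)$, the norm is only weak$^*$ lower semicontinuous on $X^{**}$, so this pointwise bound says nothing about $\norm{z^{**}}$ (weak$^*$-limits of $\ell_1$-type sequences routinely lose norm). Likewise, trying to norm $z^{**}$ by a separate weak$^*$-cluster point of norming functionals $x_n^*$ of the individual $y_n$ runs into a diagonal obstruction, because $z^{**}(x^*)$ depends on the off-diagonal values $x^*(x_m)$ whereas one controls only $x_n^*(x_n)$. The device that removes both difficulties is precisely that the lower $\ell_1$-estimate permits prescribing the \emph{same} value $c\,\overline{\sign b}$ on all the $x_n$ at once while keeping the functional in the unit ball, thereby turning the troublesome cluster value into the value of a constant sequence.
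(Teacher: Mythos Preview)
Your proof is correct and takes a genuinely different route from the paper. The paper reduces to the canonical situation in $\ell_1$: it passes to $X=\overline{\span\{x_n\}}$, observes that $T\colon\ell_1\to X$, $T((\alpha_n))=\sum\alpha_n x_n$, is an onto isomorphism with $\norm{T^{-1}}\le 1/c$, and then uses that $T^{**}$ is a weak$^*$-to-weak$^*$ homeomorphism to transport the problem to $\ell_1^{**}$, where every weak$^*$-cluster point of $(ae_1+be_n)$ has the form $ae_1+bz^{**}$ with $\norm{ae_1+bz^{**}}=\abs a+\abs b$. Your argument instead stays in $X$ and builds a single Hahn--Banach functional with constant tail values, so that $x^*(y_n)$ is literally constant for $n\ge2$; this is more elementary in that it avoids the bidual machinery and the structural fact about norms in $\ell_1^{**}$, at the cost of being slightly less conceptual. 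Both approaches ultimately rest on the same phenomenon---the lower $\ell_1$-estimate lets one prescribe values freely on the $x_n$ without exceeding norm one---but you exploit it directly rather than via the isomorphism with $\ell_1$.
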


\begin{proof} Clearly, we may assume without loss of generality that $X=\ov{\span\{x_n\setsep n\in\en\}}$. Define $T:\ell_1\to X$ by
$$T((\alpha_n))=\sum_{n=1}^\infty \alpha_n x_n,\quad (\alpha_n)\in \ell_1.$$
By the assumptions we know that $T$ is a surjective isomorphism and $\norm{T^{-1}}\le \frac{1}{c}$. Note that the bidual operator $T^{**}$ has the same properties and it is, moreover, a weak$^*$-to-weak$^*$-homeomorphism. So, each weak$^*$-cluster point of $(ax_1+bx_n)_n$ is of the form
$T^{**}(x^{**})$ where $x^{**}$ is a weak$^*$-cluster point of $(ae_1+be_n)_n$ in $\ell_1^{**}$.
But any such $x^{**}$ may be expressed as $ae_1+bz^{**}$ where $z^{**}$ is a weak$^*$-cluster point of the basis $(e_n)$. Hence $\norm{x^{**}}=\abs{a}+\abs{b}$ and so $\norm{T^{**}(x^{**})}\ge c(\abs{a}+\abs{b})$.
This completes the proof.  
\end{proof}

Now we are ready to formulate and prove the promised sufficient condition for $1$-Schur property.
 
\begin{prop}\label{P:subs-1Schur}
    Let $X$ be a Banach space. Assume that for each bounded sequence $(x_n)$ in $X$ there is a subsequence $(x_{n_k})$ and $x\in X$ such that for any further subsequence $(x_{n_{k_l}})$ and any $y\in \span(\{x\}\cup\{x_{n_k}\setsep k\in\en\}$ we have
    $$\limsup_{l}\norm{x_{n_{k_l}}+y}=\norm{y+x}+\limsup_l
    \norm{x_{n_{k_l}}-x}.$$
    Then $X$ has both the $1$-Schur property and the $1$-strong Schur property.
\end{prop}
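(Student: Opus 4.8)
The plan is to verify the $1$-Schur property through condition $(i_1)$ of Proposition~\ref{P:schur} and to establish the $1$-strong Schur property, i.e.\ condition $(vi_1)$ of Proposition~\ref{P:strongschur}, by a direct $\ell_1$-construction; both arguments rest on one reformulation of the hypothesis. Given a bounded sequence, apply the hypothesis to obtain the subsequence $(x_{n_k})$ and the vector $x$, and set $z_k=x_{n_k}-x$. Substituting $w=x+y$ turns the assumed identity into
$$\limsup_l\norm{z_{k_l}+w}=\norm{w}+\limsup_l\norm{z_{k_l}}$$
for every further subsequence and every $w\in\span(\{x\}\cup\{z_k\})$, since $y\mapsto x+y$ maps this span onto itself. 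Passing to a subsequence along which $\norm{z_k}\to c$ for some $c\ge0$, the right-hand side equals the constant $\norm{w}+c$ regardless of the subsequence; as the $\limsup$ of $(\norm{z_k+w})_k$ along every subsequence is this same constant, the full limit exists and $\lim_k\norm{z_k+w}=\norm{w}+c$ for all such $w$. This is exactly the hypothesis of Lemma~\ref{L:m1-ell1} with base point $x$ (or, if $x=0$, with base point $z_1$, which lies in the same span and is nonzero once $c>0$).

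For the $1$-strong Schur property, let $(x_n)$ be normalized and $\delta$-separated and run the above with it. Taking $w=-z_j$ gives $\lim_k\norm{z_k-z_j}=\norm{z_j}+c$; since $\norm{z_k-z_j}=\norm{x_{n_k}-x_{n_j}}\ge\delta$, we get $\norm{z_j}+c\ge\delta$, and letting $j\to\infty$ yields $c\ge\tfrac\delta2>0$. Fix $\ep>0$. Lemma~\ref{L:m1-ell1} produces a subsequence for which $\tfrac{x}{\norm{x}},\tfrac{z_{k_1}}{c},\tfrac{z_{k_2}}{c},\dots$ satisfies the lower $\ell_1$-estimate with factor $1-\ep$. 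Writing $x_{n_{k_j}}=x+z_{k_j}$ and discarding the nonnegative base-vector contribution gives $\norm{\sum_j\alpha_j x_{n_{k_j}}}\ge(1-\ep)c\sum_j\abs{\alpha_j}\ge(1-\ep)\tfrac\delta2\sum_j\abs{\alpha_j}$, while normalization yields the upper bound $\sum_j\abs{\alpha_j}$. Hence $(x_{n_{k_j}})$ is $\tfrac{2}{(1-\ep)\delta}$-equivalent to the $\ell_1$-basis, which is below $\tfrac2\delta+\ep'$ for $\ep$ small; as this uses only Lemma~\ref{L:m1-ell1}, it is valid over both $\er$ and $\ce$, giving $(vi_1)$.

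For the $1$-Schur property I verify $(i_1)$: $\limsup_n\norm{x_n}\le\dh(\clu{x_n},\{0\})$. Every weak$^*$-cluster point of a bounded sequence has norm at most its $\limsup$, and passing to a subsequence only shrinks $\clu{\cdot}$ while not increasing $\limsup\norm{\cdot}$; so it suffices, for a subsequence realizing $L:=\limsup_n\norm{x_n}$, to produce a weak$^*$-cluster point of norm arbitrarily close to $L$. Applying the reformulation to such a subsequence gives $L=\lim_k\norm{z_k+x}=\norm{x}+c$. If $c=0$, then $x_{n_k}\to x$ in norm, so $x$ is the unique weak$^*$-cluster point and $\norm{x}=L$. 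If $c>0$, fix $\ep>0$ and invoke Lemma~\ref{L:m1-ell1}; the family $\tfrac{x}{\norm{x}},\tfrac{z_{k_j}}{c}$ satisfies a lower $\ell_1$-estimate with constant $1-\ep$, so Lemma~\ref{L:hrombod}, applied with coefficients $a=\norm{x}$ on the base vector $\tfrac{x}{\norm{x}}$ and $b=c$, shows that every weak$^*$-cluster point of $\bigl(\norm{x}\tfrac{x}{\norm{x}}+c\tfrac{z_{k_j}}{c}\bigr)_j=(x+z_{k_j})_j=(x_{n_{k_j}})_j$ has norm at least $(1-\ep)(\norm{x}+c)=(1-\ep)L$. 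Such a point lies in $\clu{x_n}$, so $\dh(\clu{x_n},\{0\})\ge(1-\ep)L$, and letting $\ep\to0$ gives $(i_1)$.

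The main obstacle is the $1$-Schur step: Lemma~\ref{L:m1-ell1} naturally controls the shifted vectors $z_{k_j}$, whereas $(i_1)$ demands a large cluster point of the genuine subsequence $(x_{n_{k_j}})$. The decisive maneuver is to feed the coefficients $a=\norm{x}$, $b=c$ into Lemma~\ref{L:hrombod} so that the reconstructed vector is precisely $x+z_{k_j}$ and the lower bound $(1-\ep)(\norm{x}+c)$ matches $L=\norm{x}+c$; the degenerate case $x=0$ (handled with $z_1$ as base point and $a=0$) and the passage from the $\limsup$ along all subsequences to a genuine limit are the remaining technical points.
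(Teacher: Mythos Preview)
Your proof is correct and follows essentially the same route as the paper: verify $(i_1)$ for the $1$-Schur property via Lemma~\ref{L:m1-ell1} and Lemma~\ref{L:hrombod}, and build the $\ell_1$-subsequence for the $1$-strong Schur property from Lemma~\ref{L:m1-ell1}. The one noteworthy difference is in the $1$-strong Schur step: the paper obtains the lower $\ell_1$-estimate for $(z_{k_j})$ and then invokes \cite[Lemma 4.2]{knaust-odell} to transfer it to $(x+z_{k_j})$, whereas you keep the base vector $x/\norm{x}$ from Lemma~\ref{L:m1-ell1} and read off the estimate for $\sum_j\alpha_j(x+z_{k_j})$ directly, discarding the nonnegative base contribution. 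This avoids the external reference and works uniformly over $\er$ and $\ce$.
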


\begin{proof} To prove the $1$-Schur property we are going to verify property $(i_1)$ from Proposition~\ref{P:schur}.
    Let $(x_n)$ be a bounded sequence in $X$. Assume that $\limsup_n \norm{x_n}>c>0$. Up to passing to a subsequence, we may assume that $\inf_n \norm{x_n}>c$.
        By the assumption we may suppose, up to passing to a subsequence, that there is some $x\in X$
    such that
        $$\limsup_k\norm{x_{n_{k}}+y}=\norm{x+y}+\limsup_k\norm{x_{n_k}-x}$$
    whenever $y\in \span(\{x\}\cup\{x_{n}\setsep n\in\en\})$ and $(x_{n_k})$ is a subsequence of $(x_n)$. Let us fix a subsequence
    $(x_{n_k})$ such that $\norm{x_{n_k}-x}\to d\in[0,\infty)$. Let us distinguish two cases:

    Case 1: Assume that $d=0$. This means that $x_{n_k}\to x$ in the norm. Hence $x$ is a weak$^*$-cluster point of $(x_n)$ satisfying $\norm{x}\ge c$. Thus $\dh(\clu{x_n},\{0\})\ge c$.

   Case 2: Assume that $d>0$. Then
   $$\limsup_{l}\norm{x_{n_{k_l}}+y}=\norm{y+x}+d$$
   for each subsequence $(x_{n_{k_l}})$ of $(x_{n_k})$ and each $y\in  \span(\{x\}\cup\{x_{n}\setsep n\in\en\})$.
   This implies that
    $$\lim_{k}\norm{x_{n_k}+y}=\norm{y+x}+d$$
   for each $y\in  \span(\{x\}\cup\{x_{n}\setsep n\in\en\})$.
   Then sequence $(x_{n_k}-x)$ satisfies the assumption of Lemma~\ref{L:m1-ell1} with constant $d$ (note that
   $\span(\{x_{n_k}-x,\setsep k\in\en\}\cup\{x\})=\span(\{x_{n_k},\setsep k\in\en\}\cup\{x\})$). Fix $\ep>0$. We shall prove that some weak$^*$ cluster point of $(x_{n_k})$ has norm at least $c(1-\ep)$. We will distinguish two subcases.

    Case 2a: Assume that $x=0$. Then Lemma~\ref{L:m1-ell1} provides a subsequence $(x_{n_{k_l}})$ such that $(\frac{x_{n_{k_l}}}d)$ is $\ep$-isometrically equivalent to the canonical basis of $\ell_1$. It follows that each weak$^*$-cluster point of $(x_{n_{k_l}})$ has norm at least $(1-\ep)d\ge(1-\ep)c$, which completes the argument. 

    Case 2b: Assume that  $x\ne0$.  Then Lemma~\ref{L:m1-ell1} provides a subsequence $(x_{n_{k_l}})$ such that the sequence
    $$\frac{x}{\norm{x}},\frac{x_{n_{k_1}}-x}d,\frac{x_{n_{k_2}}-x}d,\frac{x_{n_{k_3}}-x}d,\dots$$ is $\ep$-isometrically equivalent to the canonical basis of $\ell_1$. It follows from Lemma~\ref{L:hrombod} that each cluster point of $(x_{n_{k_l}})=(x+x_{n_{k_l}}-x)$ has norm at least
    $$(1-\ep)(\norm{x}+d)=(1-\ep)(\norm{x}+\lim\norm{x_{n_{k}}-x})\ge(1-\ep)\limsup \norm{x_{n_{k}}}\ge(1-\ep)c,$$
    which completes the argument.

    In both cases 2a and 2b we get $\dh(\clu{x_n},\{0\})\ge(1-\ep)c$. Since $\ep>0$ was arbitrary, we deduce that 
    $\dh(\clu{x_n},\{0\})\ge c$.

    \smallskip

It remains to observe that in both cases 1 and 2 we proved $\dh(\clu{x_n},\{0\})\ge c$. Since $c$ was arbitrary, we conclude that $\dh(\clu{x_n},\{0\})\ge\limsup\norm{x_n}$. Hence, condition $(i_1)$ from Proposition~\ref{P:schur} is fulfilled. The same proposition then says that $X$ has the $1$-Schur property. 
    \smallskip

    Let us continue by proving the $1$-strong Schur property. We note that for real spaces this follows from the first statement using Proposition~\ref{P:strongschur}. But the complex case does not follow directly, so we provide a proof.

    Fix $\delta\in(0,2]$ and $\ep>0$. Let $(x_k)$ be a normalized $\delta$-separated sequence in $X$. Let $x\in X$ and a subsequence $(x_{n_k})$ be provided by the assumption. By passing to a further subsequence we may assume that $\norm{x_{n_k}-x}\to d\in[0,\infty)$. Since $(x_n)$ is $\delta$-separated, the triangle inequality yields $d\ge\frac\delta2>0$. Hence, $(x_{n_k}-x)$ and $d$ satisfy the assumption of Lemma~\ref{L:m1-ell1} (see Case 2 above). Lemma~\ref{L:m1-ell1} provides a subsequence $(x_{n_{k_l}})$ such that the sequence
    $$\frac{x_{n_{k_1}}-x}d,\frac{x_{n_{k_2}}-x}d,\frac{x_{n_{k_3}}-x}d,\dots$$ is $\ep$-isometrically equivalent to the canonical basis of $\ell_1$, i.e.,
    $$(1-\ep)\sum_{l=1}^m\abs{\alpha_l}\le \norm{\sum_{l=1}^m\alpha_l\tfrac{x_{n_{k_l}}-x}d}\le (1+\ep)\sum_{l=1}^m\abs{\alpha_l}$$ for each $m\in\en$ and $\alpha_1,\dots,\alpha_m\in\ef$. It follows that
     $$\norm{\sum_{l=1}^m\alpha_l(x_{n_{k_l}}-x)}\ge d(1-\ep)\sum_{l=1}^m\abs{\alpha_l}$$
    for each $m\in\en$ and $\alpha_1,\dots,\alpha_m\in\ef$. I.e., sequence $(x_{n_{k_l}}-x)$ satisfies lower $\ell_1$-estimates with constant $d(1-\ep)$. By \cite[Lemma 4.2]{knaust-odell} we may assume, up to omitting first few elements of the sequence, that sequence $(x_{n_{k_l}})$ satisfies lower $\ell_1$-estimates with constant $d(1-\ep)$ as well. (We note that the quoted lemma is stated and proved for real spaces, but the same proof works in the complex case as well.)
    Since $(x_n)$ is a normalized sequence, we deduce that $(x_{n_{k_l}})$ is $\frac{1}{d(1-\ep)}$-equivalent to the $\ell_1$-basis. Since $d\ge\frac\delta2$, we get
    $\frac{1}{d(1-\ep)}\le\frac2{\delta(1-\ep)}$. Since $\ep>0$ is arbitrary, the assertion follows.  
    \end{proof}

We continue by a sufficient condition using property $(m_1)$ with respect to some linear topology.

     \begin{thm}\label{T:m1-tau}
         Let $X$ be a Banach space. Assume that there is a linear topology $\tau$ on $X$ satisfying the following two conditions:
         \begin{enumerate}[$(i)$]
             \item Any bounded sequence in $X$ admits a $\tau$-convergent subsequence.
             \item If $(x_n)$ is a bounded sequence $\tau$-converging to $0$, then
             $$\forall y\in X\colon \limsup\norm{x_n+y}=\norm{y}+\limsup\norm{x_n}.$$
         \end{enumerate}
         Then $X$ has both the $1$-Schur property and the $1$-strong Schur property.
     \end{thm}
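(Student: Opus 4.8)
The plan is to reduce the statement directly to Proposition~\ref{P:subs-1Schur}, whose hypothesis asks for a single subsequence together with a vector $x$ witnessing an additive splitting of $\limsup$ of norms. First I would take an arbitrary bounded sequence $(x_n)$ in $X$ and invoke condition $(i)$ to extract a $\tau$-convergent subsequence $(x_{n_k})$; let $x$ denote its $\tau$-limit. This $x$ and this subsequence $(x_{n_k})$ will serve as the data required in Proposition~\ref{P:subs-1Schur}, chosen once and for all, independently of the later choices of $y$ and of any further subsequence.

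The key computation is then immediate. Since $\tau$ is a linear topology, the translated sequence $z_k=x_{n_k}-x$ is bounded and $\tau$-converges to $0$, and the same holds for each of its subsequences. Now fix any further subsequence $(x_{n_{k_l}})$ and any $y\in X$. Writing $w_l=x_{n_{k_l}}-x$, which is again a bounded $\tau$-null sequence, and applying condition $(ii)$ to $(w_l)$ with the fixed vector $x+y$, I obtain
\[
\limsup_l\norm{x_{n_{k_l}}+y}=\limsup_l\norm{w_l+(x+y)}=\norm{x+y}+\limsup_l\norm{w_l}=\norm{y+x}+\limsup_l\norm{x_{n_{k_l}}-x}.
\]
This is exactly the identity demanded in the hypothesis of Proposition~\ref{P:subs-1Schur}; indeed that proposition requires it only for $y\in\span(\{x\}\cup\{x_{n_k}\setsep k\in\en\})$, so establishing it for every $y\in X$ is more than enough. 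Proposition~\ref{P:subs-1Schur} then yields both the $1$-Schur property and the $1$-strong Schur property.

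I do not expect a serious obstacle here, as the work has been front-loaded into Proposition~\ref{P:subs-1Schur}. The only points requiring care are the use of the linearity of $\tau$ to ensure that the shifted sequence $(x_{n_k}-x)$ is again $\tau$-null (so that $(ii)$ applies), and the observation that $(ii)$ must be invoked for the sub-subsequence $(w_l)$ rather than for the original sequence, so that the resulting $\limsup_l$ is taken along the chosen further subsequence, matching the form of the hypothesis. With these two observations in place the reduction is routine and the conclusion follows.
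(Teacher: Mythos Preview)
Your proposal is correct and follows essentially the same approach as the paper: extract a $\tau$-convergent subsequence via $(i)$, use linearity of $\tau$ to make the shifted sequence $\tau$-null, and apply $(ii)$ with the vector $x+y$ to verify the hypothesis of Proposition~\ref{P:subs-1Schur}. The paper's proof is slightly terser but the argument is identical.
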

    
   \begin{proof}
       We will show that the assumptions of Proposition~\ref{P:subs-1Schur} are fulfilled. Let $(x_n)$ be any bounded sequence in $X$. By $(i)$ we may find a subsequence $(x_{n_k})$ $\tau$-converging to some $x\in X$. Since $\tau$ is a linear topology, $(x_{n_k}-x)$ $\tau$-converges to $0$ (together with any further subsequence). Now it is enough to apply condition $(ii)$ to $(x_{n_k}-x)$ and $x+y$.
   \end{proof}

\begin{cor}
    Let $X$ be a Banach space with property $(m_1^*)$ such that $B_{X^*}$ is weak$^*$-sequentially compact.
    Then $X^*$ has both the $1$-Schur property and the $1$-strong Schur property. 

    The assumptions are satisfied, in particular, if $X$ is isometric to a subspace of $c_0(\Gamma)$.
\end{cor}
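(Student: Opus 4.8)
The plan is to deduce the general statement directly from Theorem~\ref{T:m1-tau}, applied with the Banach space there taken to be $X^*$ and with $\tau$ the weak$^*$ topology $\sigma(X^*,X)$. This is a locally convex, hence linear, topology on $X^*$, so it suffices to check the two hypotheses of the theorem. Condition $(i)$ asks that every bounded sequence in $X^*$ admit a $\tau$-convergent subsequence; since any bounded sequence lies in a multiple of $B_{X^*}$, this is precisely the assumed weak$^*$-sequential compactness of $B_{X^*}$. Condition $(ii)$ asks that $\limsup\norm{x_n^*+y^*}=\norm{y^*}+\limsup\norm{x_n^*}$ for every $y^*\in X^*$ whenever $(x_n^*)$ is a bounded weak$^*$-null sequence in $X^*$; as weak$^*$-null sequences are automatically bounded (Banach--Steinhaus), this is word for word property $(m_1^*)$. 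Theorem~\ref{T:m1-tau} then yields at once that $X^*$ has both the $1$-Schur and the $1$-strong Schur property.

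It remains to verify the two hypotheses when $X$ is (isometric to) a subspace of $c_0(\Gamma)$. The substantive input is property $(m_1^*)$: every subspace of $c_0(\Gamma)$ enjoys it, which is essentially the content of \cite[Lemma 1.2]{qschur-dp} (this is exactly where $(m_1^*)$ entered the argument of \cite{qschur-dp}, so the present corollary recovers and extends \cite[Theorem 1.1]{qschur-dp}). I expect this to be the main obstacle, and I emphasize that it cannot be obtained by a soft inheritance argument: although $c_0(\Gamma)$ itself has $(m_1^*)$ — a weak$^*$-null sequence in $\ell_1(\Gamma)$ is bounded and coordinatewise null, so its $\ell_1$-mass eventually concentrates off the essentially finite support of any fixed functional, giving the additivity of norms — a weak$^*$-null sequence in $X^*=\ell_1(\Gamma)/X^{\perp}$ need not admit coordinatewise-null norm-preserving lifts to $\ell_1(\Gamma)$, since the values of a lift on the coordinates are only controlled when the coordinate vectors lie in $X$. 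Thus the verification of $(m_1^*)$ for a general subspace genuinely uses the quotient structure, and I take it from \cite{qschur-dp}.

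The remaining hypothesis, weak$^*$-sequential compactness of $B_{X^*}$, I verify directly. Let $R\colon\ell_1(\Gamma)=c_0(\Gamma)^*\to X^*$ be the restriction map, i.e.\ the adjoint of the inclusion $X\hookrightarrow c_0(\Gamma)$; it is weak$^*$-to-weak$^*$ continuous and, by the Hahn--Banach theorem, maps $B_{\ell_1(\Gamma)}$ onto $B_{X^*}$. Given a sequence $(\xi_n)$ in $B_{X^*}$, I choose norm-preserving lifts $\eta_n\in B_{\ell_1(\Gamma)}$ with $R\eta_n=\xi_n$. Each $\eta_n$ has countable support, so all of them are supported in a single countable set $\Gamma_0\subset\Gamma$, i.e.\ $(\eta_n)\subset B_{\ell_1(\Gamma_0)}$. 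Since $c_0(\Gamma_0)$ is separable, the ball $(B_{\ell_1(\Gamma_0)},w^*)$ is weak$^*$-metrizable, hence sequentially compact, so some subsequence $\eta_{n_k}$ converges weak$^*$ in $\ell_1(\Gamma_0)$ to some $\eta$; as all these vectors live on $\Gamma_0$, the convergence persists in $\ell_1(\Gamma)$. Applying the weak$^*$-continuous map $R$ gives $\xi_{n_k}=R\eta_{n_k}\to R\eta$ weak$^*$ in $X^*$. Thus $B_{X^*}$ is weak$^*$-sequentially compact, which completes the verification of the hypotheses and hence the proof.
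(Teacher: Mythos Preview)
Your proof is correct. For the main statement you proceed exactly as the paper does: apply Theorem~\ref{T:m1-tau} to $X^*$ with $\tau=w^*$, so that condition $(i)$ becomes weak$^*$-sequential compactness of $B_{X^*}$ and condition $(ii)$ becomes property $(m_1^*)$.

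For the `in particular' clause, both you and the paper invoke \cite[Lemma~1.2]{qschur-dp} to obtain $(m_1^*)$ for subspaces of $c_0(\Gamma)$, but you diverge on how to establish weak$^*$-sequential compactness of $B_{X^*}$. The paper appeals to the WCG/Eberlein framework: $c_0(\Gamma)$ is weakly compactly generated, so $(B_{\ell_1(\Gamma)},w^*)$ is an Eberlein compactum and hence sequentially compact by Eberlein--\v{S}mulyan, and this passes to the continuous image $(B_{X^*},w^*)$. You instead give a direct argument, lifting a sequence in $B_{X^*}$ to $B_{\ell_1(\Gamma)}$, observing that the lifts live in $\ell_1(\Gamma_0)$ for a countable $\Gamma_0$, using metrizability of $(B_{\ell_1(\Gamma_0)},w^*)$ to extract a convergent subsequence, and pushing back down via the restriction map. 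Your route is more elementary and self-contained; the paper's route is quicker to state and places the fact within a standard body of results that would apply to a broader class of ambient spaces.
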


\begin{proof}
    The first statement follows from Theorem~\ref{T:m1-tau} applied to $\tau=w^*$. To show the `in particular' part assume that $X$ is a subspace of $c_0(\Gamma)$. Then $X$ has property $(m_1^*)$ by \cite[Lemma 1.2]{qschur-dp}. Further, $B_{X^*}$ is known to be weak$^*$-sequentially compact. A possible argument is the following:
    The space $c_0(\Gamma)$ is weakly compactly generated (see, e.g., \cite[Example (iii) on p. 575]{fhhmpz}), so its dual unit ball (in the weak$^*$ topology) is an Eberlein compact space by \cite[Theorem 13.20]{fhhmpz}. Thus it is sequentially compact (by the Eberlein-\v{S}mulyan theorem). Finally, since $X$ is a subspace of $c_0(\Gamma)$, $(B_{X^*},w^*)$ is a continuous image of $(B_{c_0(\Gamma)^*},w^*)$, so it is sequentially compact as well.
\end{proof}

\begin{remarks}\label{rem:m1}
    (1) The above corollary provides an alternative proof of \cite[Theorem 1.1]{qschur-dp}. The original proof uses property $(m_1^*)$ and also the concrete structure of $c_0(\Gamma)$ and its subspaces. The current section shows that concrete structure of $c_0(\Gamma)$ is not needed -- property $(m_1^*)$ together with weak$^*$ sequential compactness of the dual unit ball is enough.

    However, this does not make the proof `easy'. The reason is that the proof of property $(m_1^*)$ is nontrivial, it is based on a result from \cite{kalton-werner}.

    (2) In this section we established a sufficient condition for the $1$-Schur property. Let us stress that the constant $1$ is essential. Indeed, we cannot easily adapt this condition to yield the $c$-Schur property for some $c>1$. To see the reason for that assume that a Banach space $(X,\norm{\cdot})$ satisfies assumptions of Proposition~\ref{P:subs-1Schur} and assume that $\norm{\cdot}_n$ is an equivalent norm on $X$. Then $(X,\norm{\cdot}_n)$ has the quantitative Schur property, but possibly not $1$-Schur property. We further get that
    the equality from Proposition~\ref{P:subs-1Schur} is replaced by an inequality of the form
    $$\limsup_{l}\norm{x_{n_{k_l}}+y}_n\ge c(\norm{y+x}_n+\limsup_l
    \norm{x_{n_{k_l}}-x}_n).$$
    But a condition of this form does not imply the Schur property at all, as for any $p\in (1,\infty)$ the space $\ell_p$ has property $(m_p)$, i.e., for any $x\in\ell_p$ and any weakly null sequence in $\ell_p$ we have
    $$\limsup\norm{x_n+x}=\left(\norm{x}^p+\limsup\norm{x_n}^p\right)^{1/p}\ge 2^{\frac1p-1}(\norm{x}+\limsup\norm{x_n}).$$

    (3) Our sufficient condition yield both $1$-Schur property and $1$-strong Schur property. We recall that for real spaces the $1$-Schur property implies the $1$-strong Schur property (by Proposition~\ref{P:strongschur}). However, for complex spaces Example~\ref{ex:ell_1-complexification} provides a complex space with the $1$-Schur property which fails the $1$-strong Schur property. Thus the condition in Proposition~\ref{P:subs-1Schur} is not necessary for the $1$-Schur property. Moreover, the space from Example~\ref{ex:ell_1-complexification} is clearly a dual space, so its predual witnesses that there is a space without property $(m_1^*)$ whose dual has the $1$-Schur property.

    (4) Assume that $X$ is a complex Banach space and denote by $X_R$ its real version. Then $X$ has the $1$-Schur property if and only if $X_R$ has this property. Thus, if we replace in Proposition~\ref{P:subs-1Schur} the linear span by the real-linear span, we obtain a sufficient condition for the $1$-Schur property.
\end{remarks}

\section{Preservation of quantitative Schur property}\label{sec:preserving}

In this section we study preservation of the quantitative Schur property to direct sums. We start by a general result on finite direct sums.

\begin{thm}\label{T:konecne}
    Let $N\in\en$, let $X_1,\dots,X_N$ be Banach spaces satisfying the $c$-Schur property (for some $c\ge1$).
    Let $\Phi$ be a norm on $\er^N$ such that $\norm{\cdot}_\infty\le \Phi\le\norm{\cdot}_1$. 
    Let $Y=X_1\times\dots\times X_N$ be equipped with the norm
    $$\norm{(x_1,\dots,x_N)}=\Phi((\norm{x_1},\dots,\norm{x_N})),\quad (x_1,\dots,x_N)\in Y.$$
    Then $Y$ has the $c$-Schur property as well.
\end{thm}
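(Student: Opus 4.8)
The plan is to verify condition $(i_c)$ from Proposition~\ref{P:schur} for $Y$; since $(i_c)\iff(ii_c)$ there, this yields the $c$-Schur property. First I record the structural facts I will lean on. Because $\norm{\cdot}_Y$ is a norm, $\Phi$ is nondecreasing on the nonnegative orthant, and being a finite-dimensional norm it is reflexive with $\Phi^{**}=\Phi$. Hence $Y^*=X_1^*\times\dots\times X_N^*$ carries the dual norm $\Phi^*$ (which again satisfies $\norm{\cdot}_\infty\le\Phi^*\le\norm{\cdot}_1$), and $Y^{**}=X_1^{**}\times\dots\times X_N^{**}$ carries $\Phi$, the $w^*$-topology of $Y^{**}$ being the product of the coordinate $w^*$-topologies. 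In particular, if $z_n=(x_n^1,\dots,x_n^N)$ and $w\in\clu{z_n}$, then each $w_i$ is a $w^*$-cluster point of $(x_n^i)$ and $\norm{w}=\Phi((\norm{w_1},\dots,\norm{w_N}))$.

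So let $(z_n)$ be bounded in $Y$; I must produce $w\in\clu{z_n}$ with $\norm{w}\ge\tfrac1c\limsup_n\norm{z_n}$. I may pass to subsequences freely, choosing them so that $\norm{z_n}$ converges to $\limsup_n\norm{z_n}$, since doing so does not increase $\dh(\clu{z_n},\{0\})$. I first diagonalize over the $N$ coordinates: each $X_i$ has the Schur property, hence is weakly sequentially complete, so combining this with Rosenthal's $\ell_1$-theorem, every bounded sequence in $X_i$ has a subsequence that is either norm-convergent or equivalent to the $\ell_1$-basis. Thus I may assume that for each $i$ the sequence $(x_n^i)$ is norm-convergent, say $x_n^i\to\hat x^i$ (put $i\in A$), or satisfies a lower $\ell_1$-estimate (put $i\in B$), and in either case $\norm{x_n^i}\to a_i$, whence $\limsup_n\norm{z_n}=\Phi((a_i)_i)$. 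Subtracting the fixed vector supported on $A$ with entries $\hat x^i$ shifts $\clu{z_n}$ by a constant and turns the $A$-coordinates into norm-null sequences; since $c\ge1$, the $A$-coordinates contribute their full size $a_i\ge a_i/c$ to any cluster point and so only help, and the problem reduces to the case where every coordinate is $\ell_1$-like with $\norm{x_n^i}\to a_i$.

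The crux is to find a single $w\in\clu{z_n}$ with $\norm{w_i}\ge a_i/c$ for \emph{every} $i$ at once. For a fixed coordinate this is precisely condition $(i_c)$ in $X_i$, which furnishes a $w^*$-cluster point of $(x_n^i)$ of norm at least $a_i/c$. The hard part — and the main obstacle — is that these optimal cluster points are produced by different subnets, and realizing all of them along one common subnet is a genuine correlation/double-limit phenomenon: a naive joint subnet may catch small cluster values in some coordinates. My plan is to exploit the $\ell_1$-structure surviving the reduction. Having stabilized $\norm{x_n^i}\to a_i$, I pass to a further common subsequence organized into successive disjoint ``blocks'' so that the coordinate sequences decouple; Lemma~\ref{L:hrombod} guarantees that every $w^*$-cluster point of such an $\ell_1$-block sequence retains a definite norm in each coordinate, and the construction underlying Lemma~\ref{L:m1-ell1} lets me upgrade the coordinatewise estimates to the value $a_i/c$ dictated by $(i_c)$. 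An equivalent, possibly cleaner route is to build the witnessing functional directly by an inductive gliding-hump argument in the spirit of Proposition~\ref{P:schur}: at each stage one selects, for all coordinates simultaneously, a norming functional supplied by the $c$-Schur property of $X_i$ and combines them through $\Phi$--$\Phi^*$ duality, the induction arranged so that the oscillations accumulate instead of cancelling.

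Once such a $w$ is in hand, the conclusion is immediate: using that $\Phi$ is nondecreasing on the nonnegative orthant and positively homogeneous,
\[
\dh(\clu{z_n},\{0\})\ \ge\ \norm{w}\ =\ \Phi\bigl((\norm{w_i})_i\bigr)\ \ge\ \Phi\bigl((a_i/c)_i\bigr)\ =\ \tfrac1c\,\Phi\bigl((a_i)_i\bigr)\ =\ \tfrac1c\,\limsup_n\norm{z_n},
\]
which is exactly $(i_c)$, so $Y$ has the $c$-Schur property. I expect the whole argument to be field-independent, since $(i_c)\iff(ii_c)$ and Lemmas~\ref{L:hrombod} and~\ref{L:m1-ell1} hold over both $\er$ and $\ce$. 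Should the simultaneity step prove awkward for general $N$, I would first settle $N=2$ and then induct, writing $Y=(X_1\times\dots\times X_{N-1})\times X_N$ with the induced norm and using that the partial sum again enjoys the $c$-Schur property.
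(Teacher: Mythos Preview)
Your overall framing is right --- verify $(i_c)$, pass to a subsequence so the coordinate norms converge, and close via $\Phi$--$\Phi^*$ duality --- and you correctly isolate the crux: obtaining a \emph{single} subnet along which every coordinate contributes at least $a_j/c$. But this step is not carried out. The $\ell_1$-based route is both more complicated than necessary and, as written, does not secure the sharp constant: an $\ell_1$-subsequence of $(x_n^i)$ comes with \emph{some} lower estimate $\gamma_i>0$, and Lemma~\ref{L:hrombod} then bounds every cluster point below by $\gamma_i$, but the $c$-Schur property does not convert $\limsup\norm{x_n^i}=a_i$ into $\gamma_i\ge a_i/c$. Your ``alternative route'' via a gliding-hump functional is too vague to evaluate; in particular, ``at each stage \dots for all coordinates simultaneously'' sounds like an induction along the sequence index, which is not what is needed here.

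The paper dissolves the simultaneity problem with a much simpler device: induct over the \emph{coordinates}, not over the sequence. After arranging $\norm{x^k_j}\to z_j$ for each $j$, process $j=1,\dots,N$ in turn. At step $j$, property $(i_c)$ in $X_j$ supplies a weak$^*$-cluster point of $(x^k_j)$ of norm $>(1-\ep)z_j/c$, hence a norm-one $x_j^*\in X_j^*$ with $\Re\ip{x_j^*}{x^k_j}>(1-\ep)z_j/c$ for infinitely many $k$; pass to that subsequence and move on to $j+1$. After $N$ such extractions all $N$ inequalities hold on one common subsequence $(y^k)$. Now choose $(s_1,\dots,s_N)\ge0$ with $\Phi^*(s)=1$ norming $z$, set $\psi=\sum_j s_jx_j^*\in B_{Y^*}$, and read off $\Re\psi(y^k)\ge\tfrac{1-\ep}{c}\Phi(z)$, so every weak$^*$-cluster point of $(y^k)$ has norm at least $\tfrac{1-\ep}{c}\Phi(z)\ge\tfrac{1-\ep}{c}\alpha$. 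No Rosenthal dichotomy, no subtraction of convergent parts, no Lemmas~\ref{L:hrombod} or~\ref{L:m1-ell1}: the point is to build one witnessing \emph{functional} rather than chase one common cluster point, and since $N$ is finite, $N$ successive subsequence passes suffice.
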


\begin{proof}
    Clearly $Y$ is a Banach space. Further, without loss of generality
    $$\Phi((t_1,\dots,t_N))=\Phi((\abs{t_1},\dots,\abs{t_N})\mbox{  for }(t_1,\dots,t_N)\in\er^N.$$ Let $\Phi^*$ denote the dual norm.

     We will check that $Y$ fulfills condition $(i_c)$ from Proposition~\ref{P:schur}. Let $(x^k)$ be a bounded sequence in $Y$ such that $\limsup\norm{x^k}>\alpha>0$. Let
     $$z^k=(\norm{x^k_1},\dots,\norm{x^k_N})\in\er^N \mbox{ for }k\in\en.$$ Then $\limsup \Phi(z^k)>\alpha$.
     Up to passing to a subsequence we may assume $\Phi(z^k)>\alpha$ for each $k\in\en$
     and that $z^k\to z\in\er^N$. Then clearly $\Phi(z)\ge\alpha$.

     Let $\ep>0$ be arbitrary.  For each $j=1,\dots,N$ we perform inductively the following construction:
     \begin{enumerate}[$(i)$]
         \item Set $x^{k,1}=x^k$.
         \item Given $(x^{k,j})_k$, let $(x^{k,j+1})_k$ be a subsequence of $(x^{k,j})_k$ and $x_j^*\in (X_j)^*$ be such that $\Re\ip{x_j^*}{x^{k,j+1}_j}\ge (1-\ep)\frac{z_j}{c}$  for each $k\in\en$.
     \end{enumerate}
     
     Step $(i)$ may be clearly done. Assume that $j\in\{1,\dots,N\}$ and that $(x^{k,j})$ is given. We have $\norm{x^{k,j}}\to z_j$. If $z_j>0$, property $(i_c)$ in $X_j$ provides $x^{**}_j$, a weak$^*$-cluster point of $(x^{k,j}_j)_k$ in $(X_j)^{**}$ such that $\norm{x^{**}_j}>(1-\ep)\frac{z_j}{c}$. Thus there is some $x_j^*\in (X_j)^*$ of norm one such that $\Re\ip{x^{**}_j}{x^*_j}>(1-\ep)\frac{z_j}{c}$. Then $\Re\ip{x_j^*}{x^{k,j}_j}\ge (1-\ep)\frac{z_j}{c}$ for infinitely many $k$, thus we may find a further subsequence $(x^{k,j+1})$ with the required property.
     If $z_j=0$, the step is trivial.

     Next set $y^k=x^{k,N+1}$ for $k\in\en$. Then $(y^k)$ is a subsequence of $(x_k)$ and we have
     $$\Re\ip{x_j^*}{y^{k}_j}\ge (1-\ep)\frac{z_j}{c}\mbox{ for each }k\in\en, j\in\{1,\dots,N\}.$$
     Finally, fix $(s_1,\dots,s_N)\in \er^N$ with nonnegative coordinates such that $\Phi^*(s_1,\dots,s_N)=1$ and
     $$\sum_{j=1}^N s_j z_j=\Phi(z).$$
     Define an element of $\psi\in Y^*$ by 
     $$\psi((x_1,\dots,x_N))=\sum_{j=1}^N s_j x^*_j(x_j),\quad (x_1,\dots,x_N)\in Y.$$
     Then $\norm{\psi}=1$ and for each $k\in\en$ we have
     $$\Re \psi(y^k)=\sum s_j\Re\ip{x_j^*}{y^{k}_j}\ge\tfrac{1-\ep}c \sum_{j=1}^N s_j z_j=\tfrac{1-\ep}c \Phi(z)\ge \frac\alpha c(1-\ep).$$
     Thus each weak$^*$-cluster point of $(y^k)$ has norm at least $\frac\alpha c(1-\ep)$. Since $\ep>0$ is arbitrary, the proof of property $(i_c)$ of $Y$ is complete.
\end{proof}

\begin{cor}\label{cor:finite}
    The $c$-Schur property is preserved by finite $\ell_p$-sums for each $p\in[1,\infty]$.
\end{cor}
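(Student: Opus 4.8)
The plan is to read Corollary~\ref{cor:finite} off Theorem~\ref{T:konecne} directly, since a finite $\ell_p$-sum is nothing but a special case of the weighted product treated there. By definition the $\ell_p$-sum of $X_1,\dots,X_N$ is the product $Y=X_1\times\dots\times X_N$ carrying the norm $\norm{(x_1,\dots,x_N)}=\Phi((\norm{x_1},\dots,\norm{x_N}))$ with $\Phi$ the $\ell_p$-norm on $\er^N$. Thus the entire task reduces to checking that this particular $\Phi=\norm{\cdot}_p$ meets the sandwich hypothesis $\norm{\cdot}_\infty\le\Phi\le\norm{\cdot}_1$ required by the theorem; once this is confirmed, the conclusion is immediate.

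First I would verify the elementary inequality $\norm{t}_\infty\le\norm{t}_p\le\norm{t}_1$ for every $t=(t_1,\dots,t_N)\in\er^N$ and every $p\in[1,\infty]$. The endpoint cases $p\in\{1,\infty\}$ are trivial. For $1<p<\infty$ the lower bound follows from $\max_j\abs{t_j}^p\le\sum_j\abs{t_j}^p$, and the upper bound from the estimate $\abs{t_j}^p\le\abs{t_j}\bigl(\sum_k\abs{t_k}\bigr)^{p-1}$, which upon summation over $j$ yields $\sum_j\abs{t_j}^p\le\bigl(\sum_k\abs{t_k}\bigr)^p$, that is $\norm{t}_p\le\norm{t}_1$. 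This is precisely the condition $\norm{\cdot}_\infty\le\Phi\le\norm{\cdot}_1$ demanded of $\Phi$.

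With the sandwich in hand there is nothing left to prove: assuming each $X_j$ has the $c$-Schur property for some $c\ge1$, an application of Theorem~\ref{T:konecne} with $\Phi=\norm{\cdot}_p$ gives the $c$-Schur property of $Y$. I expect no genuine obstacle here, as the statement is a direct specialization of the theorem. The only point deserving a word is the standing restriction $c\ge1$ in Theorem~\ref{T:konecne}, which is harmless: a direct comparison of the defining quantities gives $\de{x_n}\le\ca{x_n}$ for every bounded sequence, so any valid Schur constant necessarily satisfies $c\ge1$, and the natural range of the property is not curtailed.
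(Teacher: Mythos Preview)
Your proposal is correct and is exactly the intended argument: the paper states the corollary without proof, as it is an immediate specialization of Theorem~\ref{T:konecne} once one observes that $\norm{\cdot}_\infty\le\norm{\cdot}_p\le\norm{\cdot}_1$ on $\er^N$. Your remark that the hypothesis $c\ge1$ is automatically satisfied for nontrivial spaces is also correct and appropriately dispels any worry about a hidden restriction.
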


We continue by showing limits of preservation of the quantitative Schur property by finite direct sums.

\begin{prop}
Let $X_1,\dots,X_n$ be Banach spaces satisfying the $c$-Schur property for some $c\ge 1$. Let $X=X_1\times\dots\times X_n$ equipped with a norm $\norm{\cdot}$ satisfying
$$\max\{\norm{x_j}\setsep 1\le j\le n\}\le \norm{(x_1,\dots,x_n)}\le \sum_{j=1}^n\norm{x_j},\quad (x_1,\dots,x_n)\in X.$$
Then $X$ satisfies the $nc$-Schur property. Moreover, the factor $n$ is optimal.   
\end{prop}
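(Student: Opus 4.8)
The plan is to prove the $nc$-Schur property by verifying condition $(ii_{nc})$ of Proposition~\ref{P:schur}, and then to establish optimality by exhibiting a product in which the quotient $\ca{x^k}/\de{x^k}$ equals $n$. For the positive direction I would fix a bounded sequence $(x^k)$ in $X$ and argue coordinatewise. The upper estimate on the norm gives $\norm{x^k-x^l}\le\sum_{j=1}^n\norm{x^k_j-x^l_j}$, and since for each fixed $j$ the diameters $\diam\{x^k_j\setsep k\ge m\}$ are non-increasing in $m$, passing to the limit in $m$ through the finite sum yields
$$\ca{x^k}\le\sum_{j=1}^n\ca{x^k_j}.$$
On the dual side the lower estimate forces $\norm{P_j}\le1$ for each coordinate projection, so $x_j^*\circ P_j\in B_{X^*}$ whenever $x_j^*\in B_{X_j^*}$; testing the definition of $\de{\cdot}$ against such functionals gives $\de{x^k}\ge\de{x^k_j}$ for every $j$. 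Combining these with the $c$-Schur property $\ca{x^k_j}\le c\,\de{x^k_j}$ in each factor, I obtain $\ca{x^k}\le\sum_j\ca{x^k_j}\le c\sum_j\de{x^k_j}\le nc\,\de{x^k}$, which is exactly $(ii_{nc})$.

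The hard part is optimality, and the main obstacle is that no \emph{lattice} norm can witness the loss: both the $\ell_1$-sum and the $\ell_\infty$-sum preserve the constant $c$ by Theorem~\ref{T:konecne}, and for an absolute norm the functionals that detect one large difference of a test sequence automatically track its entire oscillation, forcing $\ca{x^k}=\de{x^k}$. One therefore needs a genuinely non-lattice norm whose dual ball sees each individual difference strongly yet cannot follow the oscillation along the tail. I would take all factors equal to $\ell_1$ (so $c=1$), with canonical basis $(e_p)$, and renorm $X=(\ell_1)^n$ by
$$\norm{(x_1,\dots,x_n)}=\max\Bigl(\sup_{p}\,\abs{\sum_{j=1}^n x_j(p)},\ \max_{1\le j\le n}\norm{x_j}_1\Bigr),$$
where $x_j(p)$ is the $p$-th coordinate of $x_j$. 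This norm lies between $\max_j\norm{x_j}_1$ and $\sum_j\norm{x_j}_1$, each factor embeds isometrically as $\ell_1$, and the space is a renorming of the $\ell_\infty$-sum, hence complete; the first term is the non-lattice ingredient that distinguishes alignments across coordinates.

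For the test sequence I would set $d_p=\tfrac1n(e_p,\dots,e_p)$ (the same $e_p$ in every coordinate) and put $x^{2p-1}=d_p$, $x^{2p}=-d_p$ for $p\in\en$. One checks $\norm{d_p}=1$, while $\norm{2d_p}=2$ and $\norm{x^k-x^l}\le2$ throughout, so every tail has diameter $2$ and $\ca{x^k}=2$. Since $\osc(x^*(x^k))=2\limsup_p\abs{x^*(d_p)}$, it remains to bound $\limsup_p\abs{x^*(d_p)}$ for $\norm{x^*}\le1$, and the key computation is a Cesàro average: for a finite set $F$ of indices with $\abs{F}=N\ge n$ and signs $\ep_p$, the vector $z=\tfrac1N\sum_{p\in F}\ep_p d_p$ has norm exactly $\tfrac1n$ (the first term of the norm contributes $\tfrac1N$, the second $\tfrac1n$). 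Choosing $\ep_p=\sign x^*(d_p)$ and $F$ among large indices where $\abs{x^*(d_p)}$ is near its $\limsup$ gives $x^*(z)=\tfrac1N\sum_{p\in F}\abs{x^*(d_p)}\le\tfrac1n$, whence $\limsup_p\abs{x^*(d_p)}\le\tfrac1n$; the constant-one functional placed in a single coordinate attains this value, so $\de{x^k}=\tfrac2n$. Thus $\ca{x^k}/\de{x^k}=n$, so $X$ fails the $c'$-Schur property for every $c'<n$, which proves that the factor $n$ cannot be improved.
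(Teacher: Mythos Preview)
Your proof is correct, and both parts take a different route from the paper.

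For the positive direction, the paper first invokes Corollary~\ref{cor:finite} to see that the $\ell_\infty$-sum $(X,\norm{\cdot}_\infty)$ already has the $c$-Schur property, and then uses the $n$-equivalence $\norm{\cdot}_\infty\le\norm{\cdot}\le n\norm{\cdot}_\infty$ to pass to the given norm. Your coordinatewise argument via $\ca{x^k}\le\sum_j\ca{x^k_j}$ and $\de{x^k}\ge\max_j\de{x^k_j}$ is more self-contained: it bypasses Theorem~\ref{T:konecne} entirely and verifies $(ii_{nc})$ directly.

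For optimality the two constructions are genuinely different. The paper takes $X_0=\ef$, $X_1=\dots=X_n=\ell_1$ with the ``layered'' norm
\[
\norm{(t,x_1,\dots,x_n)}=\max\Bigl\{\norm{x_n}_1,\ \norm{x_{n-1}}_1+\norm{x_n}_\infty,\ \dots,\ \abs{t}+\textstyle\sum_j\norm{x_j}_\infty\Bigr\},
\]
uses the test sequence $x^k=(1,e_k,\dots,e_k)$, and verifies condition $(i_c)$ by showing that every weak$^*$-cluster point has norm at most $1$ (via the vectors $z^m$ built from Ces\`aro averages). Your example is more symmetric: $n$ identical copies of $\ell_1$, the norm $\max\bigl(\sup_p\abs{\sum_j x_j(p)},\,\max_j\norm{x_j}_1\bigr)$, and the alternating sequence $\pm\tfrac1n(e_p,\dots,e_p)$, checked via condition $(ii_c)$. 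Both arguments ultimately hinge on the same Ces\`aro trick---averaging many test vectors collapses the ``aligned'' part of the norm from $1$ down to $\tfrac1n$---but your version isolates this mechanism more cleanly and avoids the extra scalar factor. The paper's construction, on the other hand, makes more visible \emph{why} absolute norms cannot witness the loss (each layer trades an $\ell_1$-norm for an $\ell_\infty$-norm).
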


\begin{proof}
Set $\norm{(x_1,\dots,x_n)}_\infty=\max  \{\norm{x_j}\setsep 1\le j\le n\}$ for $(x_1,\dots,x_n)\in X$. Then $(X,\norm{\cdot}_\infty)$ satisfies the $c$-Schur property by Corollary~\ref{cor:finite}. Moreover, our assumptions imply
$$\norm{\cdot}_\infty\le \norm{\cdot}\le n\cdot\norm{\cdot}_\infty.$$
We now easily deduce that $(X,\norm{\cdot})$ satisfies the $nc$-Schur property.

Let us now look at the optimality of $n$. If $n=1$, the optimality is obvious. We further fix $n\in\en$ and construct an example showing that for $n+1$ spaces the factor $n+1$ is optimal. 

Set $X_0=\ef$ and $X_1=X_2=\dots=X_{n}=\ell_1$. These spaces satisfy the $1$-Schur property. Let us define a norm on $X=X_0\times\dots\times X_n$ by
$$\begin{aligned}
    \norm{(t,x_1,\dots,x_n)}=\max\Big\{&\norm{x_n}_1,\norm{x_{n-1}}_1+\norm{x_n}_\infty,\\&\norm{x_{n-2}}_1+\norm{x_{n-1}}_\infty+\norm{x_n}_\infty,\dots,\abs{t}+\sum_{j=1}^n\norm{x_j}_\infty\Big\}.\end{aligned}$$
It is obvious that this formula defines a seminorm. Moreover, clearly
$$\max\{\abs{t},\max\{\norm{x_j}_1\setsep 1\le j\le n\}\}\le \norm{(t,x_1,\dots,x_n)}\le \abs{t}+\sum_{j=1}^n\norm{x_j}_1,$$
so it is a norm satisfying our assumption.

For $k\in \en$ set
$$x^k=(1,e_k,\dots,e_k)\in X,$$
where $e_k$ is the $k$-th canonical vector in $\ell_1$. Then $\norm{x^k}=n+1$ for each $k\in\en$. We claim that each weak$^*$-cluster point $x^{**}$ of $(x^k)$ in $X^{**}$ has norm at most $1$. If we prove this, optimality of the constant follows immediately.

We proceed by contradiction. Assume that $x^{**}$ is such a cluster point and that $\norm{x^{**}}>1+\ep$ for some $\ep>0$. Find $x^*\in X^*$ of norm one such that $\Re\ip{x^{**}}{x^*}>1+\ep$. Then there is an increasing sequence $(k_l)$ of natural numbers
$$\Re \ip{x^*}{x^{k_l}}>1+\ep\mbox{ for each }l\in\en.$$
The functional $x^*$, being an element of $X^*$, is canonically represented by an element $(s,y_1,\dots,y_n)\in \ef\times (\ell_\infty)^n$. So,
$$\Re (s+\sum_{j=1}^n y_{j,k_l}) >1+\ep \mbox{ for each }l\in\en.$$

Let $m\in\en$. Set 
$$u^m=\frac1m\sum_{l=1}^m e_{k_l}\in\ell_1$$
and
$$z^m=((1-\tfrac1m)^n,(1-\tfrac1m)^{n-1}u^m,\dots,(1-\tfrac1m)u^m,u^m)\in X.$$
We claim that $\norm{z^m}=1$. Indeed, note that $\norm{u^m}_1=1$ and $\norm{u^m}_\infty=\frac1m$ and
$$(1-\tfrac1m)^{k} +\tfrac1m \sum_{j=0}^{k-1}(1-\tfrac1m)^j =(1-\tfrac1m)^{k} +\tfrac1m \cdot \frac{1-(1-\tfrac1m)^k}{1-(1-\tfrac1m)}=1$$
for each $k\in\{1,\dots,n\}$.

Hence
$$\begin{aligned}
 1&\ge \Re \ip{x^*}{z^m} = \Re \left( s(1-\tfrac{1}{m})^n + \sum_{j=1}^n (1-\tfrac1m)^{j-1} \sum_{l=1}^m  \tfrac{y_{j,k_l}}{m}\right) 
 \\&=\Re \left(\sum_{l=1}^m \tfrac1m(s+\sum_{j=1}^n y_{j,k_l})  + s((1-\tfrac1m)^n-1) + \sum_{j=1}^n ((1-\tfrac1m)^{j-1}-1) \sum_{l=1}^m  \tfrac{y_{j,k_l}}{m}\right)
 \\& > 1+\ep - \abs{s}(1-(1-\tfrac1m)^n)- \sum_{j=1}^n (1-(1-\tfrac1m)^{j-1})\norm{y_j}_\infty
 \\&\ge 1+\ep - (1-(1-\tfrac1m)^n)- \sum_{j=1}^n (1-(1-\tfrac1m)^{j-1}]) >1
\end{aligned}$$
if $m$ is large enough. This contradiction completes the argument.

\end{proof}

We continue by preservation by arbitrary $\ell_1$-sums.

\begin{thm}\label{T:ell1suma}
    Let $c\ge 1$. Then the $c$-Schur property is preserved by arbitrary $\ell_1$-sums.
\end{thm}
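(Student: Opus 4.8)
The plan is to verify condition $(i_c)$ from Proposition~\ref{P:schur}, i.e. that $\limsup_n\norm{y^n}\le c\,\dh(\clu{y^n},\{0\})$ for every bounded sequence $(y^n)$ in $Y:=\left(\bigoplus_{i\in I}X_i\right)_{\ell_1}$. First I would reduce to a countable index set: since each $y^n$ has countable support, the sequence lives in the subsum $Y_0$ over $I_0:=\bigcup_n\spt(y^n)$, and $Y_0$ is $1$-complemented in $Y$ via the coordinate projection. As the adjoint of the norm-one, weak$^*$-to-weak$^*$ continuous inclusion $Y_0\hookrightarrow Y$ carries weak$^*$-cluster points isometrically into $Y^{**}$, the excess $\dh(\clu{y^n},\{0\})$ computed in $Y_0^{**}$ is at most the one computed in $Y^{**}$, so it suffices to treat $I=\en$. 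Now fix $\alpha<\limsup_n\norm{y^n}$, pass to a subsequence with $\norm{y^n}\to\beta:=\limsup_n\norm{y^n}>\alpha$ and, by a further diagonal extraction, with $\norm{y^n_i}\to a_i$ for each coordinate $i$. Put $\sigma=\sum_i a_i$; Fatou's lemma gives $\sigma\le\beta$. The heuristic is that the \emph{staying mass} $\sigma$ can be recaptured only with efficiency $1/c$ (this is where the $c$-Schur property of the factors enters), whereas the \emph{escaping mass} $\beta-\sigma$ survives into a cluster point with full efficiency $1$; since $c\ge1$, the total recaptured norm is at least $\tfrac\sigma c+(\beta-\sigma)=\beta-\sigma\tfrac{c-1}{c}\ge\beta/c$, which is exactly what $(i_c)$ demands.

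Fix $\ep>0$ and choose a finite block $F=\{1,\dots,m\}$ with $\sum_{i>m}a_i<\ep$. On $F$ I would recapture the mass either by invoking the finite-sum result Theorem~\ref{T:konecne} for $Z:=\left(\bigoplus_{i\in F}X_i\right)_{\ell_1}$, or directly coordinate-by-coordinate: for each $i\in F$, condition $(i_c)$ in $X_i$ yields $\dh(\clu{y^n_i},\{0\})\ge a_i/c$, hence a functional $\psi_i\in B_{X_i^*}$ and a cluster value of $\Re\la y^n_i,\psi_i\ra$ exceeding $(1-\ep)a_i/c$. Iterating the extraction over the finitely many $i\in F$ produces a single subsequence along which $\Re\sum_{i\in F}\la y^n_i,\psi_i\ra$ converges to a value $\ge\tfrac{1-\ep}{c}\sum_{i\in F}a_i\ge\tfrac{1-\ep}{c}(\sigma-\ep)$. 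The functional $(\psi_i)_{i\in F}$, extended by $0$ off $F$, lies in $B_{Y^*}$ and is supported on $F$.

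Along this subsequence I would then run a gliding-hump construction on the tail to recapture the escaping mass. Since $\norm{y^n}=\norm{P_Fy^n}+\norm{Q_Fy^n}$ in an $\ell_1$-sum and $\norm{P_Fy^n}\to\sigma_F:=\sum_{i\in F}a_i$, the tail norm satisfies $\norm{Q_Fy^n}\to\beta-\sigma_F\ge\beta-\sigma$. I construct inductively indices $n_1<n_2<\cdots$ and intervals $H_l=(M_{l-1},M_l]$ (with $M_0=m$) so that $y^{n_l}$ carries almost all of its tail mass on $H_l$: choose $n_l$ large enough that the mass of $y^{n_l}$ on the previously fixed finite set $(m,M_{l-1}]$ is below $\ep$ (possible since $\sum_{i>m}a_i<\ep$), and then $M_l$ large enough that the mass of $y^{n_l}$ beyond $M_l$ is below $\ep/2^l$. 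Defining $\psi_i\in B_{X_i^*}$ for $i\in H_l$ to norm $y^{n_l}_i$ (aligning the phase so that $\la y^{n_l}_i,\psi_i\ra=\norm{y^{n_l}_i}$), and setting $\psi_i=0$ on all coordinates outside $F\cup\bigcup_l H_l$, gives a \emph{single} $\psi\in B_{Y^*}$, because the blocks $F,H_1,H_2,\dots$ are pairwise disjoint and the supremum norm stays $\le1$. Evaluating $\Re\la y^{n_l},\psi\ra$: the $F$-part contributes $\approx\sigma/c$, the hump $H_l$ contributes $\sum_{i\in H_l}\norm{y^{n_l}_i}\ge(\beta-\sigma)-O(\ep)$, and the cross terms $\sum_{j\ne l}\sum_{i\in H_j}\abs{\la y^{n_l}_i,\psi_i\ra}$ are bounded by the mass of $y^{n_l}$ on $(m,M_{l-1}]\cup(M_l,\infty)$, hence by $O(\ep)$. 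Thus $\liminf_l\Re\la y^{n_l},\psi\ra\ge\tfrac{1-\ep}{c}(\sigma-\ep)+(\beta-\sigma)-O(\ep)$.

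Taking any weak$^*$-cluster point $y^{**}$ of $(y^{n_l})$ obtained along an ultrafilter realizing this $\liminf$ gives $\norm{y^{**}}\ge\Re\la y^{**},\psi\ra\ge\tfrac{1-\ep}{c}(\sigma-\ep)+(\beta-\sigma)-O(\ep)$, so $\dh(\clu{y^n},\{0\})$ dominates this quantity. Letting $\ep\to0$ yields $\dh(\clu{y^n},\{0\})\ge\tfrac\sigma c+(\beta-\sigma)\ge\beta/c=\tfrac1c\limsup_n\norm{y^n}>\alpha/c$; as $\alpha$ was arbitrary, $(i_c)$ holds and $Y$ has the $c$-Schur property. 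The systematic use of $\Re$ and of phase-aligned norming functionals keeps the whole argument uniform over the real and complex cases. I expect the main obstacle to be the bookkeeping in the gliding hump: one must arrange the humps on genuinely increasing, pairwise disjoint intervals while simultaneously holding the $F$-block inner products at their limits and forcing the cross terms to be negligible, so that the staying and escaping contributions add without interference in the $\ell_\infty$-norm of $Y^*$.
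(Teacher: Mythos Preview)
Your proposal is correct and follows essentially the same route as the paper: reduce to a countable index set, verify condition $(i_c)$ of Proposition~\ref{P:schur}, capture the ``staying mass'' on a finite block via the finite-sum case (Theorem~\ref{T:konecne}/Corollary~\ref{cor:finite}), and recover the ``escaping mass'' with full efficiency by a gliding-hump construction, then combine the block and hump functionals into a single $\psi\in B_{Y^*}$. The only cosmetic difference is that the paper outsources the hump extraction to \cite[Lemma~2.1]{qschur} while you carry it out inline, and the paper's bookkeeping uses the scalar sequence $y^k=(\norm{x^k_n})_n\in\ell_1$ directly rather than naming $\sigma$ and $\beta$ separately; the estimates and the final inequality $\sigma/c+(\beta-\sigma)\ge\beta/c$ are the same.
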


\begin{proof} 
    The case of finite $\ell_1$-sums is covered by Corollary~\ref{cor:finite}, so we focus on infinite sums.
    Since the $c$-Schur property is defined via sequences and each element in the $\ell_1$-sum of a family of Banach spaces has countable support, it is enough to prove the statement for countable $\ell_1$-sums. 
    So, assume that $(X_n)$ is a sequence of Banach spaces satisfying the $c$-Schur property and let $X$ be their $\ell_1$-sum. We will proceed by a modification of the proof of \cite[Theorem 1.3]{qschur}.
    
    Let $(x^k)$ be a bounded sequence in $X$ such that $\limsup\norm{x^k}>\alpha>0$.  For $k\in\en$ set $y^k=(\norm{x^k_n})_{n\in\en}\in\ell_1$. Then
    $\limsup_k\norm{y_k}>\alpha$. Up to passing to a subsequence we may assume that $\norm{y_k}>\alpha$ for each $k\in\en$ and that $y_k\to y\in\ell_1$ coordinatewise. 
    
    Fix an arbitrary $\ep>0$ and choose $m\in\en$ such that $\norm{y|_{(m,\infty)}}<\ep$. Then $\norm{x^k|_{[1,m]}}=\norm{y^k|_{[1,m]}}\to \norm{y|_{[1,m]}}$. Hence we may find $x_0^*\in (\bigoplus_{1\le n\le m} X_n)^*$ of norm one such that
    $$\Re\ip{x_0^*}{x^k|_{[1,m]}}\ge\tfrac1c(1-\ep)\norm{y|_{[1,m]}}\mbox{ for infinitely many }k\in\en.$$
    Indeed, this is trivial if $y|_{[1,m]}=0$. If $y|_{[1,m]}\ne0$, we use Corollary~\ref{cor:finite} to find $x_0^{**}$, a weak$^*$-cluster point of $(x^k|_{[1,m]})$ in the bidual such that $\norm{x_0^{**}}>\frac1c(1-\ep)\norm{y|_{[1,m]}}$. Then there is $x_0^*$ of norm one such that $\Re\ip{x_0^{**}}{x_0^*}>\frac1c(1-\ep)\norm{y|_{[1,m]}}$. Such a functional has clearly the required property.

    Up to passing to a subsequence we may assume that the inequality holds for all $k\in\en$.
    Next we apply \cite[Lemma 2.1]{qschur} to sequence $(y^k|_{(m,\infty)})$ and we get natural numbers $m=N_0<N_1<N_2<\cdots$  and a subsequence $(y^{n_k})$ such that
    $$\norm{y^{n_k}|_{(N_{k-1},N_k]}}>\norm{y^{n_k}|_{(m,\infty)}}-\ep\mbox{ for each }k\in\en.$$

    If $k\in\en$ and $j\in (N_{k-1},N_k]$, we find a norm-one functional $x_j^*\in (X_j)^*$ such that
    $\ip{x_j^*}{x^{n_k}_j}=\norm{x^{n_k}_j}$.

    Next we are going to define a suitable norm-one element $x^*\in X^*$. Note that $X^*$ is canonically isometric to the $\ell_\infty$-sum of dual spaces $X_n^*$, so we may define $x^*$ such that $x^*|_{[1,m]}=x_0^*$ and that the $j$-th coordinate equals $x^*_j$ whenever $j>m$.
    Then we have for each $k\in\en$ 
    $$\begin{aligned}
        \Re\ip{x^*}{x^{n_k}}&=\Re\ip{x_0^*}{x^{n_k}|_{[1,m]}} + \sum_{l=1}^\infty \sum_{j\in (N_{l-1},N_l]} \Re\ip{x^*_j}{x^{n_k}_j}
        \\&\ge \tfrac1c(1-\ep)\norm{y|_{[1,m]}} +\sum_{j\in (N_{l-1},N_l]} \norm{x^{n_k}_j} + \sum_{l\in\en\setminus\{k\}} \sum_{j\in (N_{l-1},N_l]} \Re\ip{x^*_j}{x^{n_k}_j}
        \\&\ge  \tfrac1c(1-\ep)\norm{y|_{[1,m]}} + \norm{y^{n_k}|_{(N_{k-1},N_k]}} - \norm{y^{n_k}|_{(m,\infty)\setminus(N_{k-1},N_k]}}
        \\&=\tfrac1c(1-\ep)\norm{y|_{[1,m]}} +2 \norm{y^{n_k}|_{(N_{k-1},N_k]}} - \norm{y^{n_k}|_{(m,\infty)}}
        \\&>\tfrac1c(1-\ep)\norm{y|_{[1,m]}}+\norm{y^{n_k}|_{(m,\infty)}}-2\ep
    \end{aligned}$$
    Since $y^{n_k}|_{[1,m]}\to y|_{[1,m]}$, we get that for all $k\in\en$ with finitely many exceptions we have
    $$\begin{aligned}         
    \Re\ip{x^*}{x^{n_k}}&>\tfrac1c(1-\ep)\norm{y^{n_k}|_{[1,m]}}+\norm{y^{n_k}|_{(m,\infty)}}-2\ep
    \ge \tfrac1c(1-\ep)\norm{y^{n_k}}-2\ep\\&>\tfrac\alpha{c}(1-\ep)-2\ep.\end{aligned}$$
    Hence any weak$^*$-cluster point of $(y^{n_k})$ has norm at least $\tfrac\alpha{c}(1-\ep)-2\ep$. Since $\ep>0$ was arbitrary, the argument is complete.   
\end{proof}

\section{Schur property of Lipschitz-free spaces}\label{sec:LF}

In  \cite[Theorem C]{p1u}  it is proved that the Lipschitz-free space over a complete metric space $M$ has the Schur property if and only if $M$ is purely $1$-unrectifiable. If $M$ is additionally proper, i.e., closed bounded sets are compact, then the respective free space has even $1$-Schur property. This follows by combining \cite[Proposition 17]{petitjean-schur} with \cite[Theorem 3.2]{p1u}. We will address a special subclass of purely $1$-unrectifiable spaces formed by uniformly separated metric spaces. In particular, we provide an example distinguishing the $1$-Schur property and the $1$-strong Schur property in real setting.

Let us briefly recall the definition of a Lipschitz-free space and basic properties of such spaces. Let $(M,d)$ be a metric space with a distinguished point $0\in M$. By $\Lip_0(M)$ we denote the space of all real-valued Lipschitz functions on $M$ which vanish at the distinguished point $0$. If we equip this space by the least Lipschitz constant norm, it becomes a Banach space.
For $x\in M$ we denote by $\delta(x)$ the evaluation functional on $\Lip_0(M)$ (defined by $f\mapsto f(x)$). The Lipschitz-free space over $M$ is then defined by
$$\F(M)=\overline{\span}\{\delta(x)\setsep x\in M\}.$$

Basic properties of Lipschitz-free spaces which we will need are collected in the following proposition.

\begin{prop}\label{P:LF-basic}
    Let $(M,d)$ be a metric space with a distinguished point $0\in M$. Then the following assertions are valid.
    \begin{enumerate}[$(a)$]
        \item The mapping $\delta:M\to\F(M)$ is an isometric embedding.
        \item The dual space $\F(M)^*$ is canonically isometric to $\Lip_0(M)$. More precisely, the assignment
        $$\varphi\in \F(M)^*\mapsto ( x\in M\mapsto \varphi(\delta(x)) )$$
        is a surjective linear isometry of $\F(M)^*$ onto $\Lip_0(M)$.
        \item Let $\star\in M$ be another distinguished point and let $\Lip_\star(M)$ denote the space of real-valued Lipschitz functions on $M$ vanishing at $\star$ and let $\F_\star(M)$ denote the induced Lipschitz-free space. Then $\Lip_\star(M)$ is isometric to $\Lip_0(M)$ and $\F_\star(M)$ is isometric to $\F(M)$. More precisely,
        $f\mapsto f-f(0)$ is a surjective linear isometry of $\Lip_\star(M)$ onto $\Lip_0(M)$ and the assignment
        $$\delta_\star(x)\mapsto \delta(x)-\delta(\star), \quad x\in M,$$
        uniquely extends to a surjective linear isometry of $\F_\star(M)$ onto $\F(M)$.
        \item If $N$ is a (metric) subspace of $M$, then $\F(N)$ is isometric to a linear subspace of $\F(M)$. More precisely, if $0\in N$ and $0$ is used as the distinguished point of $N$, then the assignment
        $$\delta_N(x)\mapsto \delta_M(x), \quad x\in N,$$
        uniquely extends to a linear isometric injection of $\F(N)$ into $\F(M)$.
    \end{enumerate}
\end{prop}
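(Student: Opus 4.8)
The plan is to prove the four assertions in order, the first two forming the backbone from which the last two follow by a direct computation on finitely supported elements together with classical facts. For (a) I would compute $\norm{\delta(x)-\delta(y)}$ directly from its meaning as an element of $\Lip_0(M)^*$: since $\norm{\delta(x)-\delta(y)}=\sup\{\abs{f(x)-f(y)}\setsep f\in\Lip_0(M),\ \norm{f}_{\Lip}\le1\}$, the bound $\le d(x,y)$ is immediate from the definition of the Lipschitz norm, while the reverse inequality is witnessed by the explicit $1$-Lipschitz function $z\mapsto d(z,y)-d(0,y)$, which vanishes at $0$ and takes the value $d(x,y)$ at $x$. This yields isometry of $\delta$.

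Part (b) is the crux. I would study the canonical map $\varphi\mapsto(x\mapsto\varphi(\delta(x)))$. Well-definedness and the estimate $\norm{\,\cdot\,}_{\Lip}\le\norm{\varphi}$ follow from (a) (and from $\delta(0)=0$, which forces the associated function to vanish at $0$), while injectivity follows from density of $\span\{\delta(x)\setsep x\in M\}$ in $\F(M)$. The key point is surjectivity together with the reverse norm inequality: given $f\in\Lip_0(M)$, I would define $\varphi$ on $\span\{\delta(x)\setsep x\in M\}$ by $\varphi(\sum_i a_i\delta(x_i))=\sum_i a_i f(x_i)$ and note that $\abs{\sum_i a_i f(x_i)}\le\norm{f}_{\Lip}\cdot\norm{\sum_i a_i\delta(x_i)}$, because $f/\norm{f}_{\Lip}$ lies in the unit ball of $\Lip_0(M)$ and the free-space norm is exactly the supremum of such pairings. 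Hence $\varphi$ extends to a functional of norm $\le\norm{f}_{\Lip}$ whose image is $f$, giving both surjectivity and the isometry.

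For (c) I would first check that $f\mapsto f-f(0)$ is a linear isometry of $\Lip_\star(M)$ onto $\Lip_0(M)$ (adding a constant preserves the Lipschitz constant and swaps the two base-point normalizations), and then transfer this to the free spaces: for $\mu=\sum_i a_i\delta_\star(x_i)$ the bijection $g\leftrightarrow g-g(\star)$ between the unit balls of $\Lip_0(M)$ and $\Lip_\star(M)$ matches $\sum_i a_i g(x_i)-(\sum_i a_i)g(\star)$ with $\sum_i a_i h(x_i)$, so $\delta_\star(x)\mapsto\delta(x)-\delta(\star)$ preserves norm on a dense subspace and extends to the asserted surjective isometry. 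For (d), with $0\in N\subset M$, I would define $S$ on generators by $\delta_N(x)\mapsto\delta_M(x)$; the bound $\norm{S\mu}_{\F(M)}\le\norm{\mu}_{\F(N)}$ holds because restriction $g\mapsto g|_N$ maps the unit ball of $\Lip_0(M)$ into that of $\Lip_0(N)$, and the reverse inequality rests on the McShane extension theorem, which extends any $1$-Lipschitz $f\in\Lip_0(N)$ to a $1$-Lipschitz function on $M$ vanishing at $0$. Matching each such $f$ with an extension shows the two suprema agree, so $S$ extends to an isometric injection $\F(N)\hookrightarrow\F(M)$.

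The genuinely substantive ingredients are the predual duality in (b), from which the identification $\F(M)^*=\Lip_0(M)$ and all the subsequent norm computations flow, and the appeal to McShane extension in (d); once these are in place, the remaining verifications — linearity, density, and the matching of suprema — are routine.
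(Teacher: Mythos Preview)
Your proof is correct. For $(a)$ your argument coincides with the paper's (you use the witness $z\mapsto d(z,y)-d(0,y)$, the paper uses $t\mapsto d(t,x)-d(0,x)$; these are the same up to relabeling). For $(b)$--$(d)$ you take a more direct, self-contained route: you verify the duality in $(b)$ by hand, exploiting that $\F(M)$ is by definition a subspace of $\Lip_0(M)^*$ so that the norm estimate $\abs{\sum_i a_i f(x_i)}\le\norm{f}_{\Lip}\norm{\sum_i a_i\delta(x_i)}$ is immediate; you prove $(c)$ by matching the unit balls of $\Lip_0(M)$ and $\Lip_\star(M)$ under $g\leftrightarrow g-g(\star)$ on finitely supported elements; and you prove $(d)$ via McShane extension. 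The paper instead defers to the universal property of Lipschitz-free spaces as developed in an external reference: it cites $(b)$ and the special case of $(d)$ outright, and derives $(c)$ by applying the universal property to the Lipschitz map $x\mapsto\delta(x)-\delta(\star)$. Your approach has the advantage of being elementary and self-contained, with McShane extension as the only nontrivial external ingredient; the paper's approach is terser and packages the routine verifications into a single citation, but requires the reader to consult another source. One small point you gloss over in $(c)$ is surjectivity of the extended map, but this is immediate since the image contains $\delta(x)-\delta(\star)$ for all $x$ (including $x=0$), hence all $\delta(x)$, and is closed.
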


\begin{proof}
    $(a)$: This is well known and easy to see. Note that the norm on $\F(M)$ is inherited from $\Lip_0(M)^*$. Hence 
    $$\norm{\delta(x)-\delta(y)}=\sup\{\abs{f(x)-f(y)}\setsep f\in\Lip_0(M),\norm{f}\le 1\}=d(x,y).$$
    Indeed, inequality `$\le$' is obvious, the converse one is witnessed by the choice $f(t)=d(t,x)-d(0,x)$.

    Assertions $(b)$--$(d)$ easily follow from the universal property of Lipschitz-free spaces explained and proved in \cite[Section 2]{cuth-doucha-w}. In fact, assertions $(b)$ and the special case of $(d)$ (when we assume $0\in N$) are proved in \cite[Section 2]{cuth-doucha-w}. Assertion $(c)$ follows from the universal property applied to the mapping $x\mapsto \delta(x)-\delta(\star)$ and the general case of $(d)$ follows from the special case using $(c)$.
\end{proof}

We continue by an easy proposition on bounded unifromly separated metric spaces.

\begin{prop}\label{P:discrete-ab}
    Let $M$ be a bounded uniformly separated metric space. Then $\F(M)$ has the quantitative Schur property. 
    More precisely, assume that $0<a\le b<\infty$ and
    $$a\le d(x,y)\le b\mbox{ whenever }a,b\in M, a\ne b.$$
    Then $\F(M)$ has the $\frac ba$-Schur property.
\end{prop}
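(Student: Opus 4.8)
The plan is to verify condition $(i_c)$ of Proposition~\ref{P:schur} with $c=\frac ba$; by that proposition this yields the $\frac ba$-Schur property. Concretely, given a bounded sequence $(\mu_k)$ in $\F(M)$ with $\limsup_k\norm{\mu_k}>\alpha$, I must exhibit a weak$^*$-cluster point of $(\mu_k)$ of norm at least $\frac ab\alpha$. Since $\F(M)^*=\Lip_0(M)$ (Proposition~\ref{P:LF-basic}$(b)$), it suffices to produce a single $f$ in the unit ball $B_{\Lip_0(M)}$ (i.e.\ a $1$-Lipschitz function vanishing at $0$) with $\limsup_k\langle f,\mu_k\rangle\ge\frac ab\alpha$: any weak$^*$-cluster point taken along a subsequence realizing this $\limsup$ then has the required norm, and letting $\alpha\uparrow\limsup\norm{\mu_k}$ gives $(i_{b/a})$.

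First I would reduce to a workable form. As each $\mu_k$ lives in the closed span of countably many $\delta(x)$, Proposition~\ref{P:LF-basic}$(d)$ lets me assume $M$ is countable, and by density (and a harmless perturbation) that each $\mu_k=\sum_x a^k_x\delta(x)$ is finitely supported. The coefficients are uniformly bounded: $\abs{a^k_x}=\abs{\langle g_x,\mu_k\rangle}\le\frac1a\norm{\mu_k}$, where $g_x$ is the peak function equal to $1$ at $x$ and $0$ elsewhere, whose Lipschitz constant is at most $\frac1a$ precisely because distinct points are at distance at least $a$. A diagonal argument gives a subsequence along which $a^k_x\to\bar a_x$ for every $x$, and since the $\ell_1$-masses $\sum_x\abs{a^k_x}$ are controlled by $\sup_k\norm{\mu_k}$, the ``stationary'' part $\bar\mu=\sum_x\bar a_x\delta(x)$ is a genuine element of $\F(M)$, while $\mu_k-\bar\mu$ has coefficients tending to $0$ pointwise. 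A gliding-hump argument in the spirit of \cite[Lemma 2.1]{qschur} (as used in the proof of Theorem~\ref{T:ell1suma}) then produces, after a further subsequence, pairwise disjoint finite ``windows'' $F_k$ carrying almost all the mass of the escaping part $\mu_k-\bar\mu$, together with a fixed finite ``head'' supporting $\bar\mu$ up to $\ep$.

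The heart of the matter is the construction of $f$. On each window $F_k$ I would take an optimal $1$-Lipschitz witness $g_k$ for $\mu_k|_{F_k}$; the decisive geometric point is that, as $\diam(F_k)\le b$, the values of $g_k$ on $F_k$ span an interval of length at most $b$, so after an additive normalization all the $g_k$ may be made to take values in one common interval of length $\le b$. Gluing these (together with a witness for $\bar\mu$ on the head) into one function $f$, and setting $f=0$ elsewhere, the $f$-values across distinct windows then differ by at most $b$ while the corresponding distances are at least $a$; hence $f$ is $\frac ba$-Lipschitz and $\frac ab f\in B_{\Lip_0(M)}$. Because the windows are disjoint and the escaping coefficients off $F_k$ are negligible, $\langle\frac ab f,\mu_k\rangle\approx\frac ab\langle g_k,\mu_k|_{F_k}\rangle$, which is $\ge\frac ab\norm{\mu_k}$ up to the shift correction discussed next; this delivers the desired cluster point.

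The step I expect to be the main obstacle is exactly this alignment. Normalizing each $g_k$ into the common interval is an additive shift, and shifting $g_k$ by a constant changes $\langle g_k,\mu_k|_{F_k}\rangle$ by that constant times the window mass $\sum_{x\in F_k}a^k_x$, which need not be negligible. The way to earn the constant $\frac ba$ rather than the naive $\frac{2b}a$ is to treat separately a ``radial'' part of each window, whose mass is recovered \emph{losslessly} through the reverse triangle inequality (functions of the form $x\mapsto d(x,0)$ are automatically $1$-Lipschitz and witness the full norm), and a ``balanced'', zero-mass part, for which the centering shift costs nothing. That this is the right mechanism, and that $\frac ba$ is sharp, is already visible on the sequence of dipoles $\mu_k=\delta(u_k)-\delta(w_k)$ with $d(u_k,w_k)=b$ and all distances between distinct windows equal to $a$: the values $f(u_k)-f(w_k)$ are capped at $b$ within a window but at $a$ across windows, and a short computation shows every weak$^*$-cluster point has norm exactly $a=\frac ab\limsup\norm{\mu_k}$.
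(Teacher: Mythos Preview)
Your route is quite different from the paper's, and the gap you yourself flag (the ``alignment'' step) is not actually closed. The paper avoids the whole gliding-hump construction by a one-line renorming trick: adjoin a new point $\star$ to $M$ at distance $\tfrac b2$ from every $x\in M$; this is a metric because $d(x,y)\le b$. Taking $\star$ as the distinguished point, every $f\in\Lip_0(M')$ is just a bounded function on $M$, and one checks directly that
\[
\tfrac2b\|f|_M\|_\infty\le L(f)\le \tfrac2a\|f|_M\|_\infty.
\]
Dually, $\F(M')$ is canonically identified with $\ell_1(M)$ up to the equivalence $\tfrac a2\|\mu\|_1\le\|\mu\|\le\tfrac b2\|\mu\|_1$. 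Since $\ell_1(M)$ has the $1$-Schur property, $\F(M')$ has the $\tfrac ba$-Schur property, and $\F(M)\subset\F(M')$ inherits it. That is the entire argument.

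Your plan is essentially to reprove the $1$-Schur property of $\ell_1$ \emph{inside} $\F(M)$ while simultaneously tracking the distortion of the norm, and the place this breaks is exactly where you say it might. After you normalize each window witness $g_k$ into a common interval of length $b$ and rescale by $\tfrac ab$, the cross-window and base-point Lipschitz conditions are fine, but the pairing $\langle g_k,\mu_k|_{F_k}\rangle$ has shifted by $(\text{constant})\cdot(\text{window mass})$. Your ``radial $+$ balanced'' fix is only a heuristic: for the balanced (zero-mass) part the shift is harmless and one does get $\tfrac ab\|\nu_k\|$, but the ``radial'' part is witnessed by a \emph{different} functional (namely $x\mapsto\pm d(x,0)$), and you have not explained how to combine two different functionals into a single weak$^*$-cluster-point estimate without losing a further factor. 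The dipole example you cite confirms sharpness of $\tfrac ba$, but it is precisely the zero-mass case and so does not test the step that is missing.

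In short: your outline would reproduce the paper's conclusion with more work once the alignment issue is properly handled, but as written it is incomplete; the paper's ``add a point and reduce to $\ell_1$'' argument sidesteps the difficulty entirely and is the cleaner path.
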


\begin{proof} Let us define an extension of $M$ by setting $M^\prime=M\cup\{\star\}$ and extending the metric $d$ by
    $$d(x,\star)=\tfrac b2, \quad x\in M.$$
    It is clear that in this way we obtain a metric space. Since $\F(M)$ is isometric to a subspace of 
    $\F(M^\prime)$ (by Proposition~\ref{P:LF-basic}$(d)$), it is enough to prove that $\F(M^\prime)$ has the $\frac ba$-Schur property.

    Let $\star$ be the distinguished point. Then
    a function $f:M^\prime\to\er$ belongs to $\Lip_0(M)$ if and only if $f(\star)=0$ and $f|_M$ is bounded. Hence
    $f\mapsto f|_M$ is a linear bijection of $\Lip_0(M^\prime)$ onto $\ell_\infty(M)$. Let us estimate the Lipschitz norm. Fix $f\in \Lip_0(M^\prime)$. Then
    $$L(f)\ge\sup_{x\in M} \frac{\abs{f(x)-f(\star)}}{d(x,\star)}=\frac 2b\norm{f|_M}_\infty.$$
    To find an upper bound fix $x,y\in M^\prime$ distinct. If $x,y\in M$, then
    $$\frac{\abs{f(x)-f(y)}}{d(x,y)}\le \frac{2\norm{f|_M}_\infty}{a}.$$
    If one of them, say $y$, equals $\star$, then
    $$\frac{\abs{f(x)-f(y)}}{d(x,y)}\le \frac2b \norm{f|_M}_\infty\le \frac2a \norm{f|_M}_\infty.$$
    Hence
    $$\frac2b\norm{f|_M}_\infty\le L(f)\le \frac2a\norm{f|_M}_\infty.$$

    It easily follows from the duality of $\F(M^\prime)$ and $\Lip_0(M^\prime)$ that $\F(M^\prime)$ is canonically isomorphic to $\ell_1(M)$ and for each $\mu\in \F(M^\prime)$ we have
    $$\frac a2\norm{\mu}_1\le \norm{\mu}\le \frac b2\norm{\mu}_1.$$
    Since $\ell_1(M)$ has the $1$-Schur property, we easily deduce that $\F(M)$ has the $\frac ba$-Schur property.
\end{proof}

We continue by the promised example.

\begin{example}\label{ex:LF}
There is a metric space $(M,d)$ with the following properties:
\begin{enumerate}[$(a)$]
    \item $M=\zet$ and $d$ is the shortest-path distance for a graph on $\zet$.
    \item $M$ is $1$-separated and $\diam M=2$.
    \item $\F(M)$ has the $2$-Schur property but it fails the $c$-Schur property for each $c<2$.
    \item $\F(M)$ has the $1$-strong Schur property.
\end{enumerate}
    
\end{example}

\begin{proof} The proof will be done in several steps.

\smallskip

\noindent{\tt Step 1:} Construction of $M$ and proof of easy statements:

\smallskip

 Let $G=(\zet,E)$ be the graph with $\zet$ as the set of vertices such that $(m,n)\in E$ (i.e., $m$ and $n$ are joined by an edge) if and only if $m\ne\pm n$. Let $d$ be the shortest-path distance. Then
 $$d(m,n)=\begin{cases} 0 & m=n, \\ 2 & m=-n\ne0,\\ 1 &\mbox{otherwise}.\end{cases}$$
Then clearly $(M,d)$ satisfies $(a)$ and $(b)$. Further, $\F(M)$ has the $2$-Schur property by Proposition~\ref{P:discrete-ab}.

\smallskip

\noindent{}{\tt Step 2:} A function $f:M\to\er$ satisfying $f(0)=0$ is $1$-Lipschitz if and only if one of the following two conditions is satisfied:
\begin{enumerate}[$(1)$]
    \item There is some $c\in [0,1]$ such that $f(\zet)\subset [c-1,c]$.
    \item There are $n\in\zet\setminus\{0\}$, $a,b\in (0,1]$ with $a+b>1$ such that
    $$f(n)=a,f(-n)=-b,f(\zet\setminus\{-n,n\})\subset [a-1,1-b].$$ 
\end{enumerate}

\smallskip

Indeed, if $(1)$ is satisfied, then $f$ is $1$-Lipschitz as $M$ is $1$-separated. If $(2)$ is fulfilled,
then $\abs{f(n)-f(-n)}\le 2=d(n,-n)$ and if $k,l\in \zet$ such that  $k\ne l$ and at least one of them does not belong to $\{n,-n\}$, then $\abs{f(k)-f(l)}\le 1\le d(k,l)$. Hence $f$ is $1$-Lipschitz.

Conversely assume that $f$ is $1$-Lipschitz. Since $f(0)=0$ and $d(n,0)=1$ for each $n\ne 0$, we get
$0\in f(\zet)\subset [-1,1]$. Assume that $(1)$ is not satisfied. Then there are $m,n\in\zet$ such that $\abs{f(m)-f(n)}>1$. Then necessarily $d(m,n)>1$, thus $m=-n\ne0$. Up to relabeling $m$ and $n$ we may assume that $a=f(n)>0$ and $b=-f(-n)>0$. For each $k\in \zet\setminus\{n,-n\}$ we have $d(k,n)=d(k,-n)=1$ and hence
$$1-b\le f(k)\le a-1.$$

\smallskip

\noindent{}{\tt Step 3:} $\F(M)$ fails the $c$-Schur property for each $c<2$.

\smallskip

Let $(x_k)$ be the sequence in $\F(M)$ given by
$$\delta(1),\delta(-1),\delta(2),\delta(-2),\delta(3),\delta(-3),\dots$$
Since for each $n\in\zet\setminus\{0\}$ we have $\norm{\delta(n)}=d(n,0)=1$, we get $\norm{x_k}=1$ for each $n\in\en$. Further, $\norm{\delta(n)-\delta(-n)}=d(n,-n)=2$, so $\norm{x_{2k}-x_{2k-1}}=2$ for each $k\in\en$.
We deduce that $\ca{x_k}=2$.

On the other hand, for each $1$-Lipschitz $f\in \Lip_0(M)$ we have
$$\begin{aligned}
    \ca{\ip{f}{x_k}}&=\limsup \ip{f}{x_k}-\liminf \ip{f}{x_k}
    \\ &\le \begin{cases}
     c-(c-1)=1 & \mbox{ if $f$ satisfies }(1),\\
     (1-b)-(a-1) =2-a-b<1 &\mbox{ if $f$ satisfies }(2).
\end{cases}\end{aligned}$$
Thus $\de{x_k}\le 1$. We conclude that $\F(M)$ fails the $c$-Schur property for $c<2$.

\smallskip

\noindent{\tt Step 4:} $\Lip_0(M)$ is isomorphic to $\ell_\infty(\zet\setminus\{0\})$ and $\F(M)$ is canonically isomorphic to $\ell_1(\zet\setminus\{0\})$. More precisely, $f\mapsto f|_{\zet\setminus\{0\}}$ is a linear bijection of $\Lip_0(M)$ onto $\ell_\infty(\zet\setminus\{0\})$ and
$$\norm{f|_{\zet\setminus\{0\}}}_\infty\le L(f)\le 2 \norm{f|_{\zet\setminus\{0\}}}_\infty,\quad f\in \Lip_0(M).$$
Further, $\F(M)$ is canonically isomorphic to $\ell_1(\zet\setminus\{0\})$. More precisely,
$$\frac12\sum_{n\in \zet\setminus\{0\}}\abs{c_n}\le \norm{\sum_{n\in\zet\setminus\{0\}} c_n\delta(n)}\le 2\sum_{n\in \zet\setminus\{0\}}\abs{c_n}$$
for each series with finitely many nonzero terms.

\smallskip

This follows from the proof of Proposition~\ref{P:discrete-ab}.

\smallskip

\noindent{\tt Step 5:} The norm on $\F(M)$ in the representation provided by Step 4 is given by $$\norm{x}=\max\{\norm{x^+}_1,\norm{x^-}_1,\max_{n\in\en} (\abs{x(n)}+\abs{x(-n)})\}, \quad x\in \ell_1(\zet\setminus\{0\}).$$

\smallskip

Let us start by proving inequality `$\ge$'. Fix $x\in\ell_1(\zet\setminus\{0\})$.
The function 
$$f(n)=\begin{cases}
    1 & x(n)\ge 0,\\ 0&\mbox{otherwise},
\end{cases}$$
is $1$-Lipschitz (by Step 2) and $\ip{f}{x}=\norm{x^+}_1$. Thus $\norm{x^+}_1\le \norm{x}$. Similarly we get $\norm{x^-}_1\le \norm{x}$. We further fix $n\in\en$. If $x(n)$ and $x(-n)$ are both in $[0,1]$ or both in $[-1,0]$, then
$$\abs{x(n)}+\abs{x(-n)}\le \max\{\norm{x^+}_1,\norm{x^-}_1\}\le\norm {x}.$$
Finally, assume that $x(n)$ and $x(-n)$ have (strictly) opposite signs. Then the function $g$ defined by
$$g(n)=\sign x(n),\ g(-n)=\sign x(-n),\ g=0\mbox{ elsewhere},$$
is $1$-Lipschitz (by Step 2) and $\ip{g}{x}=\abs{x(n)}+\abs{x(-n)}$. This completes the argument.

To prove inequality `$\le$' it is enough to take any $1$-Lipschitz function $f\in \Lip_0(M)$ and to prove that 
$\ip{f}{x}$ is bounded above by the expression on the right-hand side. So, let us take such $f$.

Assume first that $f$ satisfies $(1)$ from Step 2 and that $c\in[0,1]$ witnesses it. Then
$$\begin{aligned}
    \ip{f}{x}&=\sum_{n\in\zet\setminus\{0\}} f(n)x(n)= \sum_{x(n)>0} f(x) x(n)+ \sum_{x(n)<0} f(x) x(n)
\\&\le c\norm{x^+}_1+(1-c)\norm{x^-}_1\le\max\{\norm{x^+}_1,\norm{x^-}_1\}.\end{aligned}$$

Next assume that $f$ satisfies condition $(2)$ from Step 2 and that $n,a,b$ witness it. We shall distinguish several cases:

Case 1: $x(n)\le 0$: Define $\widetilde{f}$ by setting $\widetilde{f}(n)=0$ and $\widetilde{f}=f$ elsewhere. Then $\widetilde{f}$ is $1$-Lipschitz of type $(1)$ and we have
$$\ip{f}{x}\le\ip{\widetilde{f}}{x}\le\max\{\norm{x^+}_1,\norm{x^-}_1\}.$$

Case 2: $x(-n)\ge 0$:  Define $\widetilde{f}$ by setting $\widetilde{f}(-n)=0$ and $\widetilde{f}=f$ elsewhere. Then $\widetilde{f}$ is $1$-Lipschitz of type $(1)$ and we have
$$\ip{f}{x}\le\ip{\widetilde{f}}{x}\le\max\{\norm{x^+}_1,\norm{x^-}_1\}.$$

Case 3: $x(n)>0$ and $x(-n)<0$. Let $y\in \ell_1(\zet\setminus\{0\})$ be defined by $y(n)=y(-n)=0$ and $y=x$ elsewhere. Then
$$\begin{aligned}
\ip{f}{x}&=a x(n)-b x(-n)+ \sum_{k\in\zet\setminus \{0\}} f(k) y(k)
\\& =a \abs{x(n)} + b \abs{x(-n)}+ \sum_{y(k)>0} f(k) y(k) + \sum_{y(k)<0} f(k) y(k)
\\&\le a \abs{x(n)} + b \abs{x(-n)}+ (1-b)\norm{y^+}_1+(1-a)\norm{y^-}_1.
\end{aligned}$$

We now distinguish several subcases:

Subcase 3.1: $x(n)\ge \norm{y^-}_1$ and $x(-n)\le -\norm{y^+}_1$: 
Then $$\ip{f}{x} \le \abs{x(n)}+\abs{x(-n)}.$$

Subcase 3.2: $x(n)\ge \norm{y^-}_1$ and $0>x(-n)> -\norm{y^+}_1$:
Then
$$\ip{f}{x} \le \abs{x(n)}+\norm{y^+}_1 =\norm{x^+}_1.$$

Subcase 3.3: $0<x(n)<\norm{y^-}_1$ and $x(-n)\le -\norm{y^+}_1$:
Then
$$\ip{f}{x} \le \abs{x(-n)}+\norm{y^-}_1 =\norm{x^-}_1.$$

Subcase 3.4: $0<x(n)<\norm{y^-}_1$ and $0>x(-n)> -\norm{y^+}_1$:
Since $b>1-a$, we get 
$$\begin{aligned}
\ip{f}{x}&\le a \abs{x(n)} + b \abs{x(-n)}+ (1-b)\norm{y^+}_1+(1-a)\norm{y^-}_1
\\& \le  a \abs{x(n)} + (1-a) \abs{x(-n)}+ a\norm{y^+}_1+(1-a)\norm{y^-}_1
\\&= a\norm{x^+}_1+(1-a)\norm{x^-}_1\le \max\{\norm{x^+}_1,\norm{x^-}_1\}.
\end{aligned}$$
This completes the argument.  

\smallskip

\noindent{\tt Step 6:} Let $G^\prime$ be the full graph on $\zet$ and let $d^\prime$ be the respective shortest-path distance on $\zet$ (i.e., it is the discrete $0-1$ metric). Let $M^\prime=(\zet,d^\prime)$. Then:
\begin{enumerate}[$(i)$]
    \item $\Lip_0(M^\prime)$ is canonically isomorphic with $\ell_\infty(\zet\setminus\{0\})$ and the norm is given by $\norm{f}=\norm{f^+}_\infty+\norm{f^-}_\infty$, $f\in \ell_\infty(\zet\setminus\{0\})$. In particular, $1$-Lipschitz functions from $\Lip_0(M^\prime)$ are exactly functions satisfying property $(1)$ from Step 2.
    \item $\F(M^\prime)$ is canonically isomorphic with $\ell_1(\zet\setminus\{0\})$ and the norm is given by
    $\norm{x}=\max\{\norm{x^+}_1,\norm{x^-}_1\}$, $x\in\ell_1(\zet\setminus\{0\})$
    \item $\F(M^\prime)$ has the $1$-Schur property.
\end{enumerate}

\smallskip

Assertions $(i)$ and $(ii)$ are known and easy to prove. Assertion $(iii)$ follows from Proposition~\ref{P:discrete-ab}.

\smallskip

\noindent{\tt Step 7:} $\F(M)$ satisfies property $(iii_1)$ from Proposition~\ref{P:schur}. In particular, $\F(M)$ has the $1$-strong Schur property.

\smallskip

Let $(x_k)$ be a bounded sequence in $\F(M)$. Assume $\wca{x_k}>c>0$. By the classical Ramsey theorem we may assume, up to passing to a subsequence, that $\norm{x_k-x_l}>c$ for each $k\ne l$. Denote by $\norm{\cdot}^\prime$ the norm of $\F(M^\prime)$. Due to Step 6 it is an equivalent norm on $\F(M)$. We distinguish two cases:

Case I: $\ca[\F(M^\prime)]{x_k}\ge c$.

Case II: $\ca[\F(M^\prime)]{x_k}<c$.

Assume first that Case I occurs. By Step 6 we know that $\F(M^\prime)$ has the $1$-Schur property, thus $\delta_{\F(M^\prime)}(x_k)\ge c$. Since $B_{\Lip_0(M^\prime)}\subset B_{\Lip_0(M)}$ (by Steps 6 and 2), we deduce
$$\delta_{\F(M)}(x_k)\ge \delta_{\F(M^\prime)}(x_k)\ge c.$$
Since $\de{\cdot}$ decreases when passing to a subsequence, we conclude that $\de{x_k}\ge c$ also for the original sequence.

Next assume that Case II occurs. Fix $\ep>0$ such that $\ca[\F(M^\prime)]{x_k}<c-\ep$. Up to omitting finitely many elements we may assume that $\norm{x_k-x_l}^\prime<c-\ep$ whenever $k,l\in\en$, $k\ne l$. 
By Step 5 we deduce that
$$\forall k,l\in\en, k\ne l\;\exists n\in\en \colon \norm{(x_k-x_l)|_{\{n,-n\}}}_1>c.$$
Let us observe that for each $k\ne l$ there is a unique $n\in\en$ with the above property. Indeed, assume that for some $k\ne l$ the property is fulfilled by two different $n\in\en$. Then we get $\norm{x_k-x_l}_1>2c$. On the other hand, $\norm{x_k-x_l}_1\le 2\norm{x_k-x_l}^\prime< 2c$, a contradiction proving the uniqueness of $n$. We shall  denote the unique $n$ by $n(k,l)$.

We next obtain a decreasing sequence $(N_n)$ of infinite subsets of $\en$ such that
$$\forall n\in\en\;\forall k,l\in N_n, k\ne l\colon n(k,l)>n.$$
This sequence may be constructed by induction. Let $N_0=\en$ and assume we have constructed $N_{n-1}$.
By Ramsey theorem there is an infinite subset $N_n\subset N_{n-1}$ such that
either
$$\forall k,l\in N_n, k\ne l\colon \norm{(x_k-x_l)|_{\{n,-n\}}}_1\le c$$
or
$$\forall k,l\in N_n, k\ne l\colon \norm{(x_k-x_l)|_{\{n,-n\}}}_1> c.$$
But the second possibility cannot occur as $(x_k|_{\{n,-n\}})$ is a bounded sequence in a two-dimensional space.
Thus the first possibility takes place. This completes the inductive step.

Hence, we may construct a strictly increasing sequence $(k_n)$ of natural numbers such that $k_n\in N_n$ for each $n\in\en$.
Then $n(k_j,k_l)>\min\{j,l\}$ for each $j\ne l$. Up to relabeling the sequence we may assume that
$n(k,l)>\min\{k,l\}$ for each $k\ne l$.

Given $k\in\en$, $(n(k,l))_{l>k}$ is a sequence of natural numbers. Therefore it has either a constant subsequence or a strictly increasing subsequence. Therefore, by a standard inductive argument we may assume, up to passing to a subsequence, that
$$\forall k\in \en\colon \mbox{ the sequence $(n(k,l))_{l>k}$ is either constant or strictly increasing}.$$
Up to passing to a further subsequence we may assume that one of the following cases occurs:

Case A: The sequence $(n(k,l))_{l>k}$ is constant for each $k\in\en$.

Case B: The sequence $(n(k,l))_{l>k}$ is strictly increasing for each $k\in\en$.

Assume first that Case A takes place. Then there is a sequence $(m_k)$ of natural numbers such that $n(k,l)=m_k$ whenever $k<l$. Since $m_k\ge k\to\infty$, we may assume (up to passing to a subsequence) that $(m_k)$ is strictly increasing.

Given $k\in \en$, $(x_l|_{\{m_k,-m_k\}})_{l>k}$ is a bounded sequence in a two-dimensional space, so it has a convergent subsequence. So, up to passing again to a subsequence, we may assume that
\begin{itemize}
    \item For each $k\in \en$ the sequence $(x_l|_{\{m_k,-m_k\}})_{l>k}$ converges to some $z_k\in\ell_1(\{m_k,-m_k\})$;
    \item $\norm{z_k}_1\to \alpha\ge0$.
\end{itemize}

Assume that $\alpha>0$. Then we may assume, up to omitting finitely many elements, that $\norm{z_k}_1>\frac\alpha2$ for each $k\in\en$. Hence
$$\forall k\in\en\;\exists l_0\in\en\;\forall l\ge l_0\colon  \norm{x_l|_{\{m_k,-m_k\}}}_1>\tfrac\alpha2.$$

So, by passing to a further subsequence we may achieve that
$$\forall k\in\en\;\forall l>k \colon  \norm{x_l|_{\{m_k,-m_k\}}}_1>\tfrac\alpha2.$$
Since the sequence $(m_k)$ is strictly increasing, we deduce that $\norm{x_l}_1>(l-1)\cdot\frac\alpha2$ for each $l\in\en$, $l>1$. This is a contradiction with the assumption that $(x_n)$ is a bounded sequence.
Thus necessarily $\alpha=0$.

 Then there is $k_0\in\en$ such that for each $k\ge k_0$ we have $\norm{z_k}<\ep$. Let $k\ge k_0$ be arbitrary. We find $m\in\en$, such that $\sum_{n\ge m}(\abs{x_k(n)}+\abs{x_k(-n)})<\ep$. Further, let $l>k$ be such that $m_l>m$. Then
$$\norm{x_k-x_l}_1\ge \norm{(x_k-x_l)|_{\{m_k,-m_k\}}}_1+\norm{(x_k-x_l)|_{\{m_l,-m_l\}}}_1 >c+ \norm{x_l|_{\{m_l,-m_l\}}}_1-\ep.$$
Further, for each $p>l$ we have
$$\begin{aligned}
    \norm{x_l|_{\{m_l,-m_l\}}}_1&\ge \norm{(x_l-x_p)|_{\{m_l,-m_l\}}}_1 -\norm{x_p|_{\{m_l,-m_l\}}}_1 \\& > c- \norm{x_p|_{\{m_l,-m_l\}}}_1\overset{p}{\longrightarrow} c-\norm{z_l}>c-\ep.\end{aligned}$$
Hence we deduce that $\norm{x_k-x_l}_1>2c-2\ep$. Thus $\norm{x_k-x_l}^\prime>c-\ep$. This is a contradiction showing that Case A cannot occur. 

Finally, let us assume that Case B takes place. Fix  $k_1\in\en$. Further, let $M>0$ be such that $\norm{x_k}_1\le M$ for each $k\in\en$. Let us construct sequences $(k_j)$ and $(p_j)$ of natural numbers such that the following conditions are fulfilled.

\begin{itemize}
    \item $p_j>p_i$ for $1\le i<j$; 
    \item $\sum_{n\ge p_j}(\abs{x_{k_j}(n)}+\abs{x_{k_j}(-n)})<\frac\ep2$ for each $j\in \en$;
    \item $k_{j+1}>k_j$ for $j\in\en$;
    \item $n(k_j,l)>p_j$ whenever $j\in\en$ and $l\ge k_{j+1}$. 
\end{itemize}

It is clear that this construction may be performed. Then 
$$\norm{x_{k_{j+1}}|_{\{\pm n(k_j,k_{j+1})\}}}_1\ge \norm{(x_{k_{j+1}}-x_{k_j}))|_{\{\pm n(k_j,k_{j+1})\}}}_1-\norm{x_{k_{j}}|_{\{\pm n(k_j,k_{j+1})\}}}_1>c-\tfrac\ep2$$
for each $j\in\en$. We observe that the sequence $(n(k_j,k_{j+1}))_j$ is strictly increasing.
Fix $N\in\en$ such that $\frac MN<\frac\ep2$. It follows that there is some $j\in\{1,\dots,N\}$ such that
$\norm{x_{k_{N+2}}|_{\{\pm n(k_j,k_{j+1})\}}}_1<\frac\ep2$. Then
$$\begin{aligned}
   \norm{x_{k_{N+2}}-x_{k_j}}_1&\ge 
\norm{x_{k_{N+2}}|_{\{\pm n(k_{N+1},k_{N+2})\}}}_1-\norm{x_{k_{j}}|_{\{\pm n(k_j,k_{j+1})\}}}_1
\\&\qquad+\norm{x_{k_{j}}|_{\{\pm n(k_j,k_{j+1})\}}}_1-\norm{x_{k_{N+2}}|_{\{\pm n(k_j,k_{j+1})\}}}_1
\\&>c-\tfrac\ep2-\tfrac\ep2+c-\tfrac\ep2-\tfrac\ep2=2c-2\ep.\end{aligned}$$
Hence $\norm{x_{k_{N+2}}-x_{k_j}}^\prime>c-\ep$, a contradiction completing the proof. 
\end{proof}

\begin{example2}\label{ex:LF-complex}
Let $X$ be the complexification of the space $\F(M)$ from Example~\ref{ex:LF}. Then $X$ fails the $c$-Schur property for $c<2$, but fulfills condition $(iii_1)$ from Proposition~\ref{P:schur}. (This follows from Lemma~\ref{L:komplexifikace}.) Hence property $(iii_1)$ and the $1$-Schur property are different also within complex Banach spaces.
\end{example2}

\begin{example}\label{ex:LF-3}
    There is a metric space $(M,d)$ with the following properties:
    \begin{enumerate}[$(a)$]
        \item The metric $d$ is the shortest-path distance on a countable graph.
        \item The metric $d$ attains only values $0,1,2,3$.
        \item $\F(M)$ has the $3$-Schur property but fails the $c$-Schur property for $c<3$.
        \item $\F(M)$ fails property $(iii_c)$ from Proposition~\ref{P:schur} for $c<2$. In particular, it fails the $1$-strong Schur property.
     \end{enumerate} 
\end{example}

\begin{proof}
Consider the graph whose set of vertices is $\zet\setminus\{0\}$ and $m,n\in \zet\setminus\{0\}$ are joined by an edge if and only if $mn<0$ and $m\ne-n$.
Let $d$ denote the shortest-path distance. It is clearly given by
$$d(m,n)=\begin{cases}
  3, & m=-n,  \\ 1, & mn<0, m\ne-n,\\ 2, & mn>0, m\ne n,\\ 0, & m=n.
\end{cases}$$
Let $M=(\zet\setminus \{0\},d)$. Clearly $(b)$ is satisfied. Thus $\F(M)$ has the $3$-Schur property by Proposition~\ref{P:discrete-ab}.

To prove the remaining part of the statement we will use the following claim:

\smallskip

\noindent{\tt Claim:} If $f:M\to\er$ is a $1$-Lipschitz function, then there is at most one $n\in\en$ with 
$f(n)-f(-n)>1$.

\smallskip

Indeed, assume that $f$ is $1$-Lipschitz and $n\in\en$ is such that $f(n)-f(-n)>1$. Let $m\in\en\setminus\{n\}$ be arbitrary. Then
$$f(m)=f(-n)+(f(m)-f(-n))\le f(-n)+d(m,-n)=f(-n)+1< f(n)$$
and
$$f(-m)=f(n)+(f(-m)-f(n))\ge f(n)-d(-m,n)=f(n)-1.$$
Thus
$$f(m)-f(-m)< f(n)-(f(n)-1)=1,$$
which completes the proof of the claim.

\smallskip

Let us continue by proving that $\F(M)$ fails $c$-Schur property for $c<3$. Let us fix a distinguished point, say $1$. Let
$x_n=\delta(n)-\delta(-n)$ for each $n\in \en$. Then
$$\norm{x_n}=d(n,-n)=3$$
for each $n\in\en$. Thus $\limsup_n\norm{x_n}=3$. In view of Proposition~\ref{P:schur} the proof will be complete if we show that each weak$^*$-cluster point of $(x_n)$ in $\F(M)^{**}$ has norm at most $1$.

Assume the contrary, i.e., that there is $x^{**}$, a weak$^*$-cluster point of $(x_n)$, with $\norm{x^{**}}>1$. 
Then there is a norm-one element $f\in \F(M)^*$ with $\ip{x^{**}}{f}>1$. Hence $\ip{f}{x_n}>1$ for infinitely many $n\in\en$. It means that $f$ is a $1$-Lipschitz function on $M$ such that
$$f(n)-f(-n)>1\mbox{ for infinitely many }n\in\en.$$
But this contradicts the above claim.

\smallskip

Finally, let us prove assertion $(c)$. We take the sequence $(x_n)$ as above. Observe that $\norm{x_n-x_m}=4$ whenever $m\ne n$. Indeed, 
$$\begin{aligned}
    \norm{x_n-x_m}&=\norm{\delta(n)-\delta(-n)-\delta(m)+\delta(-m)}\\&\le \norm{\delta(n)-\delta(m)}+\norm{\delta(-m)-\delta(-n)}=d(n,m)+d(-m,-n)=4,\end{aligned}$$
which proves one inequality. To prove the converse, observe that the formula
$$g(n)=g(-m)=1,\ g(-n)=g(m)=-1$$
is $1$-Lipschitz on $\{m,n,-m,-n\}$ and so it may be extended to a $1$-Lipschitz function $\widetilde{g}$ on $M$. 
Then $h=\widetilde{g}-\widetilde{g}(1)$ is a $1$-Lipschitz function from $\Lip_0(M)$ and
$$\ip{h}{x_n-x_m}=h(n)-h(-n)-h(m)+h(-m)=g(n)-g(-n)-g(m)+g(-m)=4.$$

So, $(x_n)$ is $4$-discrete, in particular $\wca{x_n}=4$. Let us estimate $\de{x_n}$. Let $f\in \Lip_0(M)$ be $1$-Lipschitz. Applying the above claim to $f$ and $-f$ we see that for all $n\in\en$ with at most two exceptions we have $\abs{f(n)-f(-n)}\le 1$. In particular, there is $n_0\in\en$ such that $\abs{f(n)-f(-n)}\le 1$ for $n\ge n_0$.
Thus for $m,n\ge n_0$ we have
$$\begin{aligned}    
\abs{\ip{f}{x_n-x_m}}&=\abs{f(n)-f(-n)-f(m)+f(-m)}\\&\le\abs{f(n)-f(-n)}+\abs{f(m)-f(-m)}\le 2.\end{aligned}$$
Thus $\ca{\ip{f}{x_n}}\le 2$. Since $f$ was arbitrary, we deduce $\de{x_n}\le2$, which completes the proof.    
\end{proof}

\section{Final remarks and open problems}

In this section we collect open problems which arise from the results. Some of them concern optimality of constants and inequalities.

\begin{ques}
    Is implication $(iv_c)\to(i_{2c})$ in Proposition~\ref{P:wsc} optimal?
\end{ques}

This question is related to comparison of two natural notion of quantitative weak sequential completeness.
We note that we know no example showing that factor $2$ is necessary. In fact, we do not know whether some factor greater than $1$ is needed. Te next question is similar.

\begin{ques}
    Is implication $(iii_c)\to(ii_{2c+1})$ in Proposition~\ref{P:schur} optimal?
\end{ques}

This question is related to comparison of two natural notions of quantitative Schur property. In this case some increase of constant is necessary, as in Examples~\ref{ex:LF} and~\ref{ex:LF-complex} we provide a Banach space satisfying $(iii_1)$, $(ii_2)$, but not $(ii_c)$ for $c<2$. But it is not clear whether the optimal constant is $2c+1$, $2c$ or $c+1$ (or something else).

A further question is related to the sufficient condition from Section~\ref{sec:m1}.

\begin{ques}
    Is the condition from Proposition~\ref{P:subs-1Schur} necessary for the $1$-Schur property of a real Banach space?
\end{ques}

We note that for complex spaces the condition is not necessary by Remark~\ref{rem:m1}(3).

The last question is devoted to Lipschitz-free spaces.

\begin{ques}
    Let $M$ be a uniformly separated metric space.
    \begin{enumerate}[$(1)$]
        \item Does $\F(M)$ has the quantitative Schur property? Does it have the $3$-Schur property?
        \item Is $\F(M)$ quantitatively weakly sequentially complete?
        \item Are the two measures of non-compactness ($\omega(\cdot)$ an $\wk{\cdot}$ equivalent in $\F(M)$?
     \end{enumerate}
\end{ques}

As remarked above, due to \cite{p1u} we know that $\F(M)$ has the Schur property. If $M$ is additionally bounded, we know it has quantitative Schur property by Proposition~\ref{P:discrete-ab}. But the key question is whether the constant $\frac ba$ in the quoted proposition is optimal. Examples~\ref{ex:LF} and~\ref{ex:LF-3} show that the constant is optimal if $\frac ba$ equals $2$ or $3$. But it is not clear how to construct examples with worse constants.

\bibliographystyle{acm}
\bibliography{schurmiry}
\end{document}